\newcommand{\Tau}{\mathcal{T}}
\newtheorem{thm}{Theorem}[section]
\newtheorem{prp}[thm]{Proposition}
\newtheorem{lem}[thm]{Lemma}
\newtheorem{asp}[thm]{Assumption}
\theoremstyle{definition}
\theoremstyle{remark}
\newcommand{\grad}{\mathop{\rm grad}\nolimits}
\renewcommand{\div}{\mathop{\rm div}\nolimits}
\renewcommand{\@biblabel}[1]{#1.\hfill}
\newcommand{\corr}[1]{{\color{black}#1}}
\begin{document}
\title{Meshfree Generalized Multiscale Exponential Integration Method for Parabolic Problems}
\author{Djulustan Nikiforov\thanks{North-Eastern Federal University, Yakutsk, Russia; \texttt{dju92@mail.ru}}  \and Leonardo A. Poveda\thanks{Department of Mathematics, The Chinese University of Hong Kong, Hong Kong Special Administrative Region; \texttt{lpoveda@math.cuhk.edu.hk}} \and  Dmitry Ammosov\thanks{North-Eastern Federal University, Yakutsk, Russia; \texttt{dmitryammosov@gmail.com}}  \and Yesy Sarmiento\thanks{Escuela de Matemáticas y Ciencias de la Computación, Universidad Nacional Autónoma de Honduras, Boulevard Suyapa, Tegucigalpa, M.D.C., Honduras; \texttt{yesy.sarmiento@unah.hn}}\and Juan Galvis\thanks{Departamento de Matemáticas, Universidad Nacional de Colombia, Carrera 45 No. 26-85, Edificio Uriel Gutiérrez, Bogotá D.C., Colombia; \texttt{jcgalvisa@unal.edu.co}}
}

\maketitle

\begin{abstract}
This paper considers flow problems in multiscale heterogeneous porous media. The multiscale nature of the modeled process significantly complicates numerical simulations due to the need to compute huge and ill-conditioned sparse matrices, which negatively affect both the computational cost and the stability of the numerical solution. We propose a novel combined approach of the meshfree Generalized Multiscale Finite Element Method (MFGMsFEM) and exponential time integration for solving such problems. MFGMsFEM provides a robust and efficient spatial approximation, allowing us to consider complex heterogeneities without constructing a coarse computational grid. At the same time, using the cost-effective MFGMsFEM matrix, exponential integration provides a robust temporal approximation for stiff multiscale problems, allowing larger time steps. For the proposed multiscale approach, we provide a rigorous convergence analysis, including the new analysis of the MFGMsFEM spatial approximation. We conduct numerical experiments to computationally verify the proposed approach by solving linear and semi-linear flow problems in multiscale media. 
Numerical results demonstrate that the proposed multiscale method achieves significant reductions in computational cost and improved stability, even with larger time steps, confirming the theoretical analysis.

\noindent{\bf Keywords:} Parabolic problems; Meshfree generalized multiscale finite element method; Time integration; Exponential integrator.
\end{abstract}


\section{Introduction}

Flow simulation in porous media involves modeling the movement of fluids through natural or artificial materials such as soil, rocks, and filters 
\cite{narayanan2018flow,battiato2019theory, bastian2013simulation}. This field provides valuable insights into various phenomena, including groundwater flow, which is crucial for managing water resources, predicting contamination spread, and assessing human impact on aquifers; oil reservoir behavior, helping optimize production strategies, estimate recovery rates, and predict reservoir performance; subsurface carbon storage, essential for addressing climate change by evaluating the feasibility and safety of carbon capture and storage projects; radioactive waste management, critical for designing safe repositories and assessing long-term containment; and the design of porous materials, which allows for the computation of optimal application designs by modeling their elastic, diffusion related, and transport properties.

Modeling diffusion in heterogeneous porous materials presents significant challenges, primarily due to the variability and complexity of the permeability coefficient \(\kappa(x)\). This coefficient represents how easily fluid moves through the porous media. The way \( \kappa \) is described and how it variates in space and time will determine the real computational complexity and numerical analysis challenges of the whole simulation. Other parameters still need to be considered, e.g., boundary conditions and forcing terms, but we focus on the permeability tensor. In applications, the description of \( \kappa \) can be achieved in several ways depending on modeling goals and restrictions. The alternatives are:

\begin{itemize}
    \item Oracle form: A code or black box can return the permeability value at selected locations. This black-box computation can be complemented with information related to the computational complexity associated with the computation of the permeability.
    \item Parameterized formula: In some applications, a parameterized formula is available to compute the permeability at selected locations. This parameterized formula may have been obtained previously by adjusting some data or models. 
    \item Discrete description in a mesh: In many practical applications, such as subsurface flow problems, the permeability coefficient is described using a mesh of the domain, and we have a constant tensor value for each element in the mesh. We mention the following cases:
    \begin{itemize}
        \item Classical meshes: In this case, the mesh corresponds to a conforming triangulation made of squares or triangles in two dimensions or tetrahedral or parallelepipeds in three dimensions. In this case, we can use classical finite element methods to compute solutions of flow problems.
        \item General polygonal meshes: In this case, the mesh is not standard, and its elements are general polygonal subdomains, e.g., Voronoi meshes. In this case, nonstandard techniques, such as spatial discontinuous Galerkin formulation of the virtual element method, have to be employed to compute solutions to flow problems.
    \end{itemize}
\end{itemize}
The reader must also consider the fact that the values of the permeability may not be available in the whole physical domain of the problem but only in special subsets of representative volumes. Also, the permeability value may be uncertain due to the lack of information in some applications. 

Various approximations of solutions to the flow problem in porous media and related questions have been explored. We mention the references such as \cite{abreu_recursiveparabolic_2021, Huang_exponetial_parabolic_2019, con2020, SUN201876}. Our focus is on the numerical computation of solutions to this problem. Classical methods face challenges with robustness and efficiency when dealing with high-contrast multiscale coefficients. We address three key challenges in designing efficient numerical methods for these problems; the first two are:

\begin{enumerate}
    \item The spatial resolution needed for accurate approximation of the solution is related to the smallest scale at which coefficient $\kappa$ variations occur. Additionally, discontinuities and high jumps of the coefficient pose additional difficulties in numerical approximation, affecting accuracy and efficiency. See \cite{egh12, efendiev2011multiscale, abreu2019convergence, calo2016randomized} for further details.

    \item The presence of high-contrast coefficients, even without complex multiscale variations, reduces the stability region of time discretization methods such as Crank–Nicolson and similar integrators. Extensive research has been devoted to addressing these challenges, but we do not intend to provide a comprehensive review here.
\end{enumerate}

Challenge 1 affects both time-dependent and time-independent problems. Classical multiscale methods offer good approximations only for moderate-to-low-contrast coefficients. However, Generalized Multiscale Finite Element Methods (GMsFEM) were developed to handle problems with high-contrast coefficients, primarily utilizing local eigenvalue problems to construct appropriate coarse-mesh approximation spaces. 
GMsFEM has proven to be effective in numerically upscaling partial differential equations with multiscale high-contrast parameters, finding applications in linear heat transfer, fluid flow, elasticity, and parabolic equations, among other practical problems \cite{chung2014generalized, gao2015generalized, fu2019generalized, chung2014adaptive}. 
For further details, see \cite{egh12, efendiev2011multiscale, abreu2019convergence, calo2016randomized, zambrano2021fast}.

Now, we turn to the third key challenge.  When the permeability description is available only on a complicated fine grid, it is very costly to construct appropriate coarse meshes where GMsFEM methodology can be applied. This difficulty has led researchers to propose several alternatives:

\begin{itemize}
    \item Extension of GMsFEM methodologies to complicated coarse meshes (such as the ones obtained by software like ParMETIS). See \cite{calvo2024robust}. Here, the main ingredient is the construction of a multiscale partition of unity functions in the resulting complicated mesh. 
    \item Extension of GMsFEM methodologies to meshfree GMsFEM method in 
    \cite{nikiforov2023meshfree} that have the advantage of not requiring a coarse mesh construction but only a point cloud of coarse nodes. 
\end{itemize}

We focus on the coarse meshfree GMsFEM developed in \cite{nikiforov2023meshfree}.
In \cite{nikiforov2023meshfree}, one of the authors designed a coarse meshfree Generalized Multiscale Finite Element Method (MFGMsFEM). The coarse space is constructed based on an existing fine-scale computational grid that resolves the heterogeneous parameters of the problem. The method builds upon the GMsFEM, incorporating multiscale basis functions to account for the heterogeneous parameters on the coarse scale. These basis functions are generated offline using local spectral problems. Numerical solutions for both two-dimensional and three-dimensional problems are presented in this work. The MFGMsFEM has been extended to several problems, including nonlinear and time-dependent problems \cite{djulustan2024meshfree, nikiforov2023meshfree, nikiforov2024meshfree,nikiforov2024meshfree1}.

In the present paper, we consider the case of approximation of solutions to time-dependent problems posed in high-contrast multiscale media. We also need to address stability issues \cite{ contreras2023exponential, Expint2007,poveda2024second}, and several alternatives have been proposed in the literature depending on the application and computational resources available. Here, we mention: 
a) partially explicit time integration, as proposed in 
\cite{li2022partially} and extended to MFGMsFEM in 
\cite{djulustan2024meshfree}, and 
b) the exponential time integration, a method known for its effectiveness in handling stiff problems,  
that can take full advantage of what GMsFEM has to offer in terms of dimension reduction by being able to use larger time steps in the simulations (see 
\cite{contreras2023exponential, poveda2024second}).

\corr{We focus on the exponential integration strategies. In the referred works, the exponential integration was used for classical GMsFEM constructions.  
This paper proposes employing exponential integration in combination with MFGMsFEM methods to show that stability and accuracy can be maintained even with larger time steps for the (coarse) meshfree GMsFEM method. Combining GMsFEM for spatial approximation and exponential integration for temporal discretization offers a promising solution to the challenges posed by high-contrast multiscale coefficients. This approach enhances computational efficiency while ensuring stability and accuracy in simulations. 
Unlike traditional methods that rely on coarse computational grids or standard temporal discretizations, our approach leverages the strengths of MFGMsFEM and exponential integration. This combination handles complex heterogeneities without needing coarse grids and provides a robust temporal approximation for stiff multiscale problems.
We show that this combination can also be implemented where no coarse mesh is available in the computation of the coarse space. In this case, a stable method is important for applications where constructing a coarse mesh is impractical or impossible.}

The rest of the paper is organized as follows. Section \ref{sec:preliminaries} provides a short review of the related work and existing methods for solving parabolic problems in multiscale media, highlighting their limitations and the need for the proposed approach. Section \ref{sec:meshfree_gmsfem} details the meshfree generalized multiscale finite element method (MFGMsFEM). Section \ref{sec:gmsfem_time_discretization} describes the exponential time integration method used for temporal discretization, explaining its advantages and implementation details. Section \ref{sec:convergence} presents the convergence analysis for the proposed method. Section \ref{sec:numerical_results} discusses the setup of the numerical experiments conducted to validate the proposed method, comparing the proposed method with traditional approaches in terms of accuracy, computational efficiency, and stability. Finally, Section 7 concludes the paper by summarizing the key findings, discussing the implications of the results, and suggesting potential directions for future research.

\section{Preliminaries}\label{sec:preliminaries}
This section briefly introduces the mathematical model considered in this paper. Then, we outline the spatial approximation using the finite element method and the temporal discretization using the finite difference method. We will consider this solution method as a reference in numerical experiments.

\subsection{Mathematical model}\label{subsec:mathematical_model}
We consider the fluid filtration process in a multiscale porous medium with high contrast. The mathematical model is described by a semilinear parabolic equation for the pressure field $p$, and it is written as
\begin{equation}\label{eq:fine_sys}
\begin{aligned}
\frac{\partial p}{\partial t} - \div \left( \kappa(x) \grad p \right) &= f(p), &&x \in \Omega \times I,\\
p &= p_D, && x \in \partial \Omega \times I,\\
p(0, x) & = p_0(x), &&x \in \Omega,
\end{aligned}\end{equation} 
where $\Omega \subset \mathbb{R}^2$ is \corr{a domain} with boundary 
$\partial \Omega$, $I=[0,T]$ denotes the time domain, $\kappa (x)$ is a multiscale high-contrast heterogeneous permeability coefficient, such that $\kappa_{\min}\leq \kappa(x)\leq \kappa_{\max}$, where $0<\kappa_{\min}<\kappa_{\max}$. $f(p)$ is a nonlinear source term, $p_D$ is a boundary value, and $p_0$ is an initial value. The methods implemented here can be easily extended to the three-dimensional case. 

\subsection{Fine-grid finite element approximation}\label{subsec:fine_grid_fem}
Let us assume that $p(t) \in V$ for all $t \in I$, where $V = \{ v \in H^1(\Omega): v|_{\partial \Omega} = p_D\}$. Moreover, we define the function space $\widehat{V} = \{ v \in H^1(\Omega): v|_{\partial \Omega} = 0\}$. Then, we obtain the following variational formulation of the problem \eqref{eq:fine_sys}: Find $p(t) \in V$ such that
\begin{equation}\label{eq:var_form}
m\left(\frac{\partial p}{\partial t}, v\right) + a(p, v) = F(p; v), \quad \text{for all } v \in \widehat{V},
\end{equation}
where the forms $m$, $a$, and $F$ are defined as follows
\begin{equation}\label{eq:fine_forms}
m(p, v) = \int_\Omega p v dx, \quad
a(p, v) = \int_\Omega \kappa(x) \grad p \cdot \grad v dx \quad \mbox{ and }
F(p; v) = \int_{\Omega}f(p) v dx.
\end{equation}

Next, we partition the computational domain $\Omega$ into fine cells $K^h \in \Tau^h$, where $\Tau^h$ is a fine grid. We assume that the fine-grid size $h > 0$ is small enough to resolve all the fine-scale heterogeneities of the permeability coefficient $\kappa(x)$.

We define the finite-dimensional function spaces $V^h \subset V$ and $\widehat{V}^h \subset \widehat{V}$ with respect to $\Tau^h$. Therefore, we obtain the following semi-discrete variational problem: Find $p(t) \in V_h$ such that
\begin{equation}\label{eq:semidiscrete_var_form}
\begin{split}
m\left(\frac{\partial p}{\partial t}, v\right) + a(p, v) = F(p; v), \quad &\text{for all } v \in \widehat{V}_h, \quad t \in I,\\
(p(0), v) = (p_0, v), \quad &\text{for all } v \in \widehat{V}_h,
\end{split}
\end{equation}
where $p = \sum_{i=1}^{N_v^h} p_i \phi_i$, and $\phi_i$ are two-dimensional fine-scale piecewise linear basis functions.


Note that we can represent this problem in the following continuous time matrix form: Find the vector $p(t) = [p_1(t), p_2(t), ..., p_{N_v^h}(t)]^T$ such that
\begin{equation}\label{eq:cont_time_matrix_form}
\begin{split}
M \frac{d p}{d t} + A p &= b(p),\\
M p(0) &= \widehat{p},
\end{split}
\end{equation}
where
\begin{equation}\label{eq:fine_matrices_and_vectors}
M = [m_{ij}], \quad m_{ij} = m(\phi_j, \phi_i), \quad
A = [a_{ij}], \quad a_{ij} = a(\phi_j, \phi_i), \quad
b = [b_{i}], \quad b_{i} = F(p; \phi_i).
\end{equation}

\subsection{Temporal discretization using finite difference method}\label{subsec:fine_grid_fd}
We use the backward Euler method for time discretization of the system \eqref{eq:cont_time_matrix_form}. Let us define a uniform time grid $\omega_t = \{ t^n = n \tau, \ n = 0, 1, \ldots, N_{\text{t}}-1, \ \tau N_{\text{t}} = t_{\text{max}} \}$, where $\tau$ is a time step, $N_{\text{t}}$ is a count of time steps, and $t_{\text{max}}$ is a solution time. Then, we obtain the following fully discrete problem: Find the vector $p^{n} = [p_1^n, p_2^n, ..., p_{N_v^h}^n]$ such that
\begin{equation}\label{eq:matrix_form}
M \frac{p^{n}-p^{n - 1}}{\tau} + A p^{n} = b^{n}, \quad n = 1, 2, \ldots, N_{\text{t}},
\end{equation} 
where $b^{n} = b(p^{n})$ and $p^{n} \approx p(t^n)$.

Note that we can represent this problem in the following form
\begin{equation}\label{eq:matrix_form_2}
p^{n}= (M + \tau A)^{-1} (M p^{n - 1} + \tau b^{n}), \quad n = 1, 2, \ldots, N_{\text{t}}.
\end{equation} 
However, its large dimension makes solving this fine-scale problem computationally challenging. Moreover, the multiscale heterogeneous media can be very complicated and have high contrast; additionally, the construction of coarse mesh may be impractical or impossible. Then,  using standard GMsFEM for model reduction can be impractical. Therefore, in the next section, we present the coarse meshfree GMsFEM for efficient spatial approximation. The reader must keep in mind that the presence of high-contrast coefficients also negatively affects the time stability of the methods, which may restrict the size of the time step, resulting in less efficient and less accurate approximations of the solution at the final time \cite{contreras2023exponential,poveda2024second}. 

\section{Coarse meshfree generalized multiscale finite element method}\label{sec:meshfree_gmsfem}
This section presents the (coarse) meshfree GMsFEM for the flow problem in multiscale heterogeneous media (MFGMsFEM). We follow \cite{nikiforov2023meshfree}. The method was presented as a modification of the standard GMsFEM for the case where the construction of the coarse spaces requires only an overlapping partition into neighborhoods and does not require a coarse mesh 
per se. In this case, the support of the coarse basis functions, or neighborhoods, is not the union of distinguished coarse elements as in the original GMsFEM.   The main advantage of the MFGMsFEM is the capability to consider very complex fine grids and heterogeneous coefficients without needing to construct a suitable coarse grid. As in the standard GMsFEM, the method consists of offline and online stages. The offline stage consists of the following steps:
\begin{enumerate}
\item Generation of a meshfree coarse-scale point cloud;
\item Construction of a multiscale space;
\item Assembling a projection matrix into the multiscale space.
\end{enumerate}
In the online stage, the fine-scale system is converted to a coarse-scale system using the resulting projection matrix. As a result, the problem is solved on a coarse scale with a few degrees of freedom while maintaining high accuracy. See \cite{nikiforov2023meshfree}.
The MFGMsFEM has been successfully applied to several problems; see 
 \cite{djulustan2024meshfree, nikiforov2023meshfree,nikiforov2024meshfree,nikiforov2024meshfree1}. Here, we apply MFGMsFEM to 
 the problem \eqref{eq:fine_sys} for the space discretization and combine it with an exponential integration for the time discretization in such a way that we project all linear systems and function of matrix computation to the coarse scale, see \cite{contreras2023exponential, poveda2024second}.

\subsection{Meshfree coarse scale}\label{subsec:coarse_scale}
In the MFGMsFEM, we use a point cloud instead of a structured coarse grid. Let $\mathcal{S}_H$ be an overlapping partition of the computational domain $\Omega$ to the point cloud in such a way that $\Omega \subset \bigcup_{i=1}^{N_v^H} S_i$, and suppose that each coarse subdomain (or neighborhood) $S_i$ is partitioned into a connected union of fine-grid elements (see Fig. \ref{ris:coarse_example}). Let ${\{ x_i \}}^{N_v^H}_{i=1}$ \corr{be} the coarse-scale points, where $N_v^H$ denotes the number of points. Here, the coarse elements $S_i$ are the supports of basis functions
\begin{equation} \label{eq:def:neigh:juan}
S_i =\bigcup
\left\{ K^h :  \max_{y\in K^h}\ \parallel y- x_i \parallel \le r_i, \quad K^h\in \mathcal{T}^h  \right\},
\end{equation} 
where $r_i$ is the radius of the coarse element $S_i$.

\begin{figure}[!hbt]
\begin{center}
\includegraphics[width=0.4\linewidth]{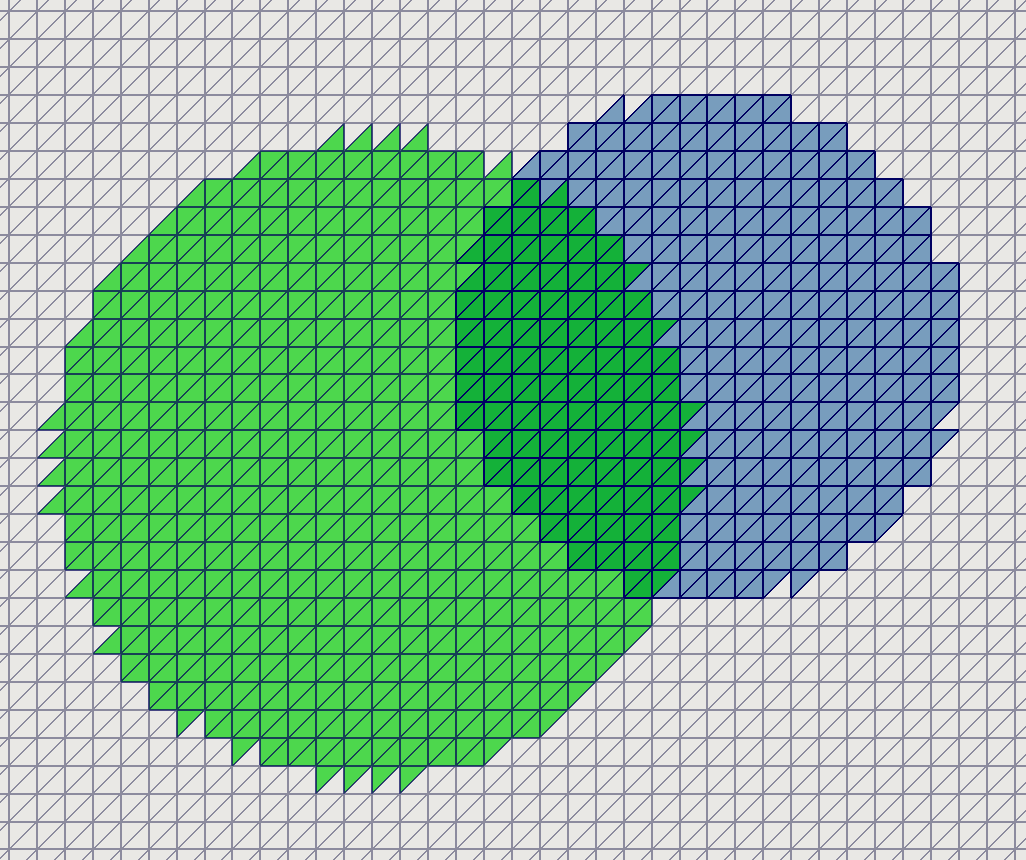}
\end{center}
\caption{Example of some coarse-scale elements $S_i$ (green) and $S_j$ (blue).}
\label{ris:coarse_example}
\end{figure}

We denote the basis functions by $\psi_{i, k}$, which is supported in $S_i$, and the index $k$ represents the numbering of these basis functions. In turn, the solution will be sought as
\begin{equation}\label{eq:ms_sol}
p_{\text{ms}}(x, t) = \sum_{i, k} p_{\text{ms}, ik}(t) \psi_{ik}^{\text{ms}}(x).
\end{equation} 

The main parameters of the MFGMsFEM are the choice of the location of points $\{x_i\}_{i=1}^{N_v^H}$, as well as the sizes of $r_i$ for $\{S_i\}_{i=1}^{N_v^H}$. The version of CVT (centroidal Voronoi tessellations) proposed in \cite{du2002meshfree} is used in this work (see Fig. \ref{ris:pe_pic}). An important aspect of this approach is the choice of the density distribution function $\rho(x)$, which is calculated based on the high-contrast permeability coefficient $\kappa(x)$ and the initial condition $p_0$ using the following problem
\begin{equation}\label{eq:rho_problem}\begin{split}
\beta \div \left( \kappa(x) \grad p \right) + p = p_0, \quad &x \in \Omega \times I,\\
p = 0, \quad &x \in \partial \Omega \times I,
\end{split}\end{equation} 
where $\beta$ is the smoothing parameter, which, in our case, was chosen to be $\beta = 0.01$ by numerical experiments. In this problem \eqref{eq:rho_problem}, we smooth out the heterogeneous initial condition $p_0$, taking into account permeability $\kappa(x)$. These two parameters create computational difficulties in our case. For more details, see \cite{nikiforov2023meshfree}.

For computing the radii $r_i$, it is necessary to consider the fulfillment of the condition of covering the entire region $\Omega \subset \bigcup_{i=1}^{N_v^H} S_i$. The algorithm proposed in \cite{du2002meshfree} is also used. In this algorithm, the parameter $\gamma$ is responsible for the complete coverage of $\Omega$ and the smoothness of the shape functions $W_i(x)$. It is worth noting that it is necessary to use $\gamma > 1$.

\begin{figure}[!hbt]
\begin{center}
\begin{tikzpicture}[scale = 0.7] 
\def\L{10.0}; 
\def\M{10.0}; 
\draw[black] (0,0) rectangle (\L,\M); 
\filldraw[blue] (0*\L,0*\M) circle (2pt); 
\filldraw[blue] (0.18*\L,0*\M) circle (2pt); 
\filldraw[blue] (0.27*\L,0*\M) circle (2pt); 
\filldraw[blue] (0.36*\L,0*\M) circle (2pt); 
\filldraw[blue] (0.44*\L,0*\M) circle (2pt); 
\filldraw[blue] (0.53*\L,0*\M) circle (2pt); 
\filldraw[blue] (0.61*\L,0*\M) circle (2pt); 
\filldraw[blue] (0.71*\L,0*\M) circle (2pt); 
\filldraw[blue] (0.77*\L,0*\M) circle (2pt); 
\filldraw[blue] (0.87*\L,0*\M) circle (2pt); 
\filldraw[blue] (1*\L,0*\M) circle (2pt); 
\filldraw[blue] (0*\L,0.15*\M) circle (2pt); 
\filldraw[blue] (0.13*\L,0.11*\M) circle (2pt); 
\filldraw[blue] (0.23*\L,0.13*\M) circle (2pt); 
\filldraw[blue] (0.34*\L,0.13*\M) circle (2pt); 
\filldraw[blue] (0.42*\L,0.11*\M) circle (2pt); 
\filldraw[blue] (0.51*\L,0.1*\M) circle (2pt); 
\filldraw[blue] (0.6*\L,0.13*\M) circle (2pt); 
\filldraw[blue] (0.68*\L,0.09*\M) circle (2pt); 
\filldraw[blue] (0.8*\L,0.1*\M) circle (2pt); 
\filldraw[blue] (0.9*\L,0.17*\M) circle (2pt); 
\filldraw[blue] (1*\L,0.2*\M) circle (2pt); 
\filldraw[blue] (0*\L,0.21*\M) circle (2pt); 
\filldraw[blue] (0.12*\L,0.23*\M) circle (2pt); 
\filldraw[blue] (0.21*\L,0.24*\M) circle (2pt); 
\filldraw[blue] (0.32*\L,0.22*\M) circle (2pt); 
\filldraw[blue] (0.41*\L,0.2*\M) circle (2pt); 
\filldraw[blue] (0.51*\L,0.18*\M) circle (2pt); 
\filldraw[blue] (0.58*\L,0.24*\M) circle (2pt); 
\filldraw[blue] (0.65*\L,0.23*\M) circle (2pt); 
\filldraw[blue] (0.73*\L,0.18*\M) circle (2pt); 
\filldraw[blue] (0.83*\L,0.23*\M) circle (2pt); 
\filldraw[blue] (1*\L,0.28*\M) circle (2pt); 
\filldraw[blue] (0*\L,0.37*\M) circle (2pt); 
\filldraw[blue] (0.18*\L,0.32*\M) circle (2pt); 
\filldraw[blue] (0.29*\L,0.32*\M) circle (2pt); 
\filldraw[blue] (0.37*\L,0.3*\M) circle (2pt); 
\filldraw[blue] (0.47*\L,0.34*\M) circle (2pt); 
\filldraw[blue] (0.48*\L,0.26*\M) circle (2pt); 
\filldraw[blue] (0.56*\L,0.33*\M) circle (2pt); 
\filldraw[blue] (0.66*\L,0.31*\M) circle (2pt); 
\filldraw[blue] (0.75*\L,0.3*\M) circle (2pt); 
\filldraw[blue] (0.87*\L,0.33*\M) circle (2pt); 
\filldraw[blue] (1*\L,0.36*\M) circle (2pt); 
\filldraw[blue] (0*\L,0.43*\M) circle (2pt); 
\filldraw[blue] (0.13*\L,0.4*\M) circle (2pt); 
\filldraw[blue] (0.25*\L,0.42*\M) circle (2pt); 
\filldraw[blue] (0.35*\L,0.39*\M) circle (2pt); 
\filldraw[blue] (0.43*\L,0.41*\M) circle (2pt); 
\filldraw[blue] (0.51*\L,0.41*\M) circle (2pt); 
\filldraw[blue] (0.59*\L,0.41*\M) circle (2pt); 
\filldraw[blue] (0.68*\L,0.41*\M) circle (2pt); 
\filldraw[blue] (0.77*\L,0.37*\M) circle (2pt); 
\filldraw[blue] (0.85*\L,0.43*\M) circle (2pt); 
\filldraw[blue] (1*\L,0.45*\M) circle (2pt); 
\filldraw[blue] (0*\L,0.53*\M) circle (2pt); 
\filldraw[blue] (0.12*\L,0.52*\M) circle (2pt); 
\filldraw[blue] (0.24*\L,0.54*\M) circle (2pt); 
\filldraw[blue] (0.31*\L,0.49*\M) circle (2pt); 
\filldraw[blue] (0.38*\L,0.49*\M) circle (2pt); 
\filldraw[blue] (0.47*\L,0.5*\M) circle (2pt); 
\filldraw[blue] (0.55*\L,0.5*\M) circle (2pt); 
\filldraw[blue] (0.63*\L,0.5*\M) circle (2pt); 
\filldraw[blue] (0.74*\L,0.48*\M) circle (2pt); 
\filldraw[blue] (0.86*\L,0.52*\M) circle (2pt); 
\filldraw[blue] (1*\L,0.55*\M) circle (2pt); 
\filldraw[blue] (0*\L,0.61*\M) circle (2pt); 
\filldraw[blue] (0.14*\L,0.63*\M) circle (2pt); 
\filldraw[blue] (0.28*\L,0.62*\M) circle (2pt); 
\filldraw[blue] (0.36*\L,0.58*\M) circle (2pt); 
\filldraw[blue] (0.44*\L,0.57*\M) circle (2pt); 
\filldraw[blue] (0.5*\L,0.61*\M) circle (2pt); 
\filldraw[blue] (0.59*\L,0.58*\M) circle (2pt); 
\filldraw[blue] (0.67*\L,0.6*\M) circle (2pt); 
\filldraw[blue] (0.78*\L,0.59*\M) circle (2pt); 
\filldraw[blue] (0.87*\L,0.61*\M) circle (2pt); 
\filldraw[blue] (1*\L,0.65*\M) circle (2pt); 
\filldraw[blue] (0*\L,0.72*\M) circle (2pt); 
\filldraw[blue] (0.12*\L,0.72*\M) circle (2pt); 
\filldraw[blue] (0.25*\L,0.69*\M) circle (2pt); 
\filldraw[blue] (0.36*\L,0.69*\M) circle (2pt); 
\filldraw[blue] (0.43*\L,0.67*\M) circle (2pt); 
\filldraw[blue] (0.49*\L,0.73*\M) circle (2pt); 
\filldraw[blue] (0.57*\L,0.67*\M) circle (2pt); 
\filldraw[blue] (0.66*\L,0.69*\M) circle (2pt); 
\filldraw[blue] (0.76*\L,0.68*\M) circle (2pt); 
\filldraw[blue] (0.87*\L,0.71*\M) circle (2pt); 
\filldraw[blue] (1*\L,0.73*\M) circle (2pt); 
\filldraw[blue] (0*\L,0.83*\M) circle (2pt); 
\filldraw[blue] (0.1*\L,0.8*\M) circle (2pt); 
\filldraw[blue] (0.2*\L,0.77*\M) circle (2pt); 
\filldraw[blue] (0.31*\L,0.79*\M) circle (2pt); 
\filldraw[blue] (0.41*\L,0.79*\M) circle (2pt); 
\filldraw[blue] (0.5*\L,0.82*\M) circle (2pt); 
\filldraw[blue] (0.57*\L,0.76*\M) circle (2pt); 
\filldraw[blue] (0.68*\L,0.79*\M) circle (2pt); 
\filldraw[blue] (0.77*\L,0.77*\M) circle (2pt); 
\filldraw[blue] (0.87*\L,0.8*\M) circle (2pt); 
\filldraw[blue] (1*\L,0.77*\M) circle (2pt); 
\filldraw[blue] (0*\L,0.91*\M) circle (2pt); 
\filldraw[blue] (0.13*\L,0.89*\M) circle (2pt); 
\filldraw[blue] (0.27*\L,0.89*\M) circle (2pt); 
\filldraw[blue] (0.36*\L,0.87*\M) circle (2pt); 
\filldraw[blue] (0.44*\L,0.89*\M) circle (2pt); 
\filldraw[blue] (0.53*\L,0.91*\M) circle (2pt); 
\filldraw[blue] (0.6*\L,0.86*\M) circle (2pt); 
\filldraw[blue] (0.69*\L,0.88*\M) circle (2pt); 
\filldraw[blue] (0.8*\L,0.87*\M) circle (2pt); 
\filldraw[blue] (0.9*\L,0.9*\M) circle (2pt); 
\filldraw[blue] (1*\L,0.84*\M) circle (2pt); 
\filldraw[blue] (0*\L,1*\M) circle (2pt); 
\filldraw[blue] (0.15*\L,1*\M) circle (2pt); 
\filldraw[blue] (0.2*\L,1*\M) circle (2pt); 
\filldraw[blue] (0.33*\L,1*\M) circle (2pt); 
\filldraw[blue] (0.39*\L,1*\M) circle (2pt); 
\filldraw[blue] (0.5*\L,1*\M) circle (2pt); 
\filldraw[blue] (0.57*\L,1*\M) circle (2pt); 
\filldraw[blue] (0.66*\L,1*\M) circle (2pt); 
\filldraw[blue] (0.78*\L,1*\M) circle (2pt); 
\filldraw[blue] (0.82*\L,1*\M) circle (2pt); 
\filldraw[blue] (1*\L,1*\M) circle (2pt); 
\end{tikzpicture}
\end{center}
\caption{Point cloud illustration obtained with the 
centroidal Voronoi tessellations method. See \cite{du2002meshfree}.}
\label{ris:pe_pic}
\end{figure}
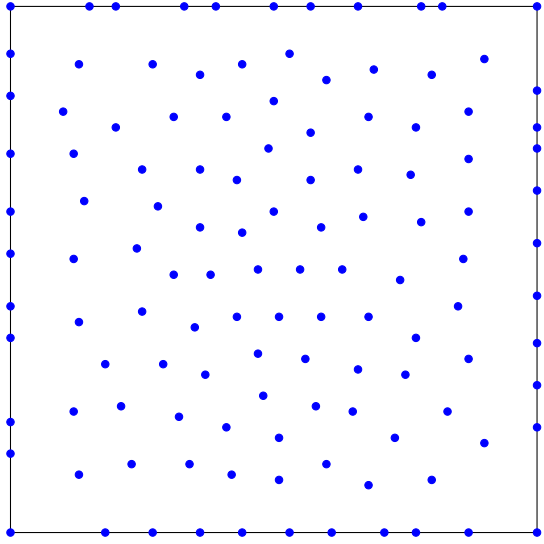

\subsection{Local basis functions}\label{subsec:gmsfem_basis}
We need to solve local spectral problems in each coarse element to construct multiscale basis functions. This step allows us to identify the most dominant modes and capture important fine-scale information. Thus, in each $S_i$, we need to find $\lambda_k^{S_i} \in \mathbb{R}$ and $\psi_k^{S_i} \in H^1(S_i)$ such that
\begin{equation}\label{eq:spectral_problem}
a^{S_i} (\psi_k^{S_i}, v) = \lambda_k^{S_i} m^{S_i} (\psi_k^{S_i}, v), \quad \text{for all } v \in H^1(S_i),
\nonumber
\end{equation}
where $a^{S_i}$ and $m^{S_i}$ are bilinear forms defined as follows
\begin{equation}
\begin{split}
a^{S_i} (\psi_k^{S_i}, v) = \int_{S_i} \kappa(x) \grad \psi_k^{S_i} \cdot \grad v dx, \quad 
m^{S_i} (\psi_k^{S_i}, v) = \int_{S_i} \kappa(x) \psi_k^{S_i} v dx.
\end{split} 
\nonumber
\end{equation}
Next, normalize the eigenvectors such that 
$\int_{S_i}\kappa |\psi_k^{S_i} |^2=1$ and we sort the obtained eigenvalues in ascending order
$
\lambda_1^{S_i}\leq \lambda_2^{S_i}\leq \dots  .
$
Then, we choose $N_{\text{b}}$ eigenfunctions, which correspond to the first $N_{\text{b}}^{S_i}$ eigenvalues to be included in the multiscale space, say $\{ \psi_k^{S_i}\}_{k=1}^{N_{\text{b}}^{S_i}}$. They are also called local spectral basis functions.

We note that, with the local bilinear forms introduced above, in each neighborhood $S_i$, the eigenvalue problem above corresponds to the weak form of the eigenvalue problem
\begin{equation}
\label{eq:strong_eig}
-\mbox{div}(\kappa  \nabla \psi_\ell^{S_i})=\lambda_k^{S_i} \kappa  \psi_\ell^{S_i},
\end{equation}
with homogeneous Neumann
boundary condition on $\partial S_i$.

\subsection{Global formulation}\label{subsec:global_formulation}
In the standard GMsFEM, one multiplies the eigenfunctions by the partition of unity functions to ensure the conformality of basis functions. In the meshfree modification, specific shape functions $W_i$ defined on $S_i$ play their role. We use them to form the initial coarse space
\begin{equation} 
V^{\text{init}}_0 = \text{span} \{ W_i(x):~ 1 \leq i \leq N_v^H \}.
\nonumber
\end{equation}
In our work, we use the following shape functions
\begin{equation}
W_i(x) = \frac{\phi_i(x)}{\sum_{j=1}^N \phi_j(x)},
\nonumber
\end{equation}
where $\phi_i(x)$ are kernel functions defined as the following cubic splines
\begin{equation}
\phi_i(r) = 2 \left\{
\begin{array}{lr}
2/3 + 4 \ (r-1) \ r^2,& r \le 0.5,\\
4/3 \ (1-r)^3,& 0.5 \le r \le 1,\\
0,& 1 \le r.
\end{array}
\right.
\nonumber
\end{equation}
Here, $r = \frac{||x - x_i||}{r_i}$ is the normalized distance. We multiply the eigenfunctions $\psi_k^{S_i}$ by the shape functions $W_i$ to obtain multiscale basis functions; that is, we introduce
\begin{equation}
\psi_{ik}^{\text{ms}} = W_i \psi_k^{S_i}, \ 1 \le i \le N_v^H, \ 1 \le k \le N_{\text{b}},
\nonumber
\end{equation}
where $k$ is an index of the eigenfunction. Figure \ref{ris:basis1} illustrates a shape function $W_i$, an eigenfunction $\psi_k^{S_i}$, and the resulting multiscale basis function.

Note that one can use several multiscale basis functions per coarse element. Moreover, the accuracy of the multiscale method is controlled by the number of basis functions. The resulting multiscale space is defined as follows
\begin{equation}
\begin{split}
V_{0} = \text{span} \{ \psi_{ik}^{\text{ms}}: 1 \le i \le N_v^H, \ 1 \le k \le N_{\text{b}}^{S_i} \}.
\end{split}
\nonumber
\end{equation}

We search for a solution in the multiscale space that contains significantly fewer degrees of freedom than the standard fine-scale one. Moreover, we do not need a coarse grid to construct the multiscale space. Therefore, we have the following problem: Find $p_{\text{ms}}(t) \in V_0$ such that
\begin{equation}\label{eq:var_form_ms}
m\left(\frac{\partial p_{\text{ms}}}{\partial t}, v\right) + a(p_{\text{ms}}, v) = F(p_{\text{ms}}; v), \quad \text{for all } v \in \widehat{V}_0,
\end{equation}
where the forms $m$, $a$, and $F$ are defined in \eqref{eq:fine_forms}.

Note that we can represent this problem in matrix form. Let us denote by $N_{\text{c}} = N_v^H N_{\text{b}}$ the number of degrees of freedom of the multiscale problem. Next, we renumber the eigenfunctions from $1$ to $N_{\text{c}}$. Then, we introduce the projection matrix into the multiscale space $R_0$ as follows
\begin{equation}\label{eq:projection_matrix}
R_0 = [\psi_{1}^{\text{ms}}, \psi_{2}^{\text{ms}}, ..., \psi_{N_{\text{c}}}^{\text{ms}}]^T,
\end{equation}
where $\psi_{k}^{\text{ms}}$ are the fine-scale coordinate representations of the corresponding eigenfunctions.

Finally, we have the following problem in a continuous time matrix form: Find the vector $p_{\text{ms}}(t) = [p_{\text{ms}, 1}(t), ..., p_{\text{ms}, N_{\text{c}}}(t)]^T$ such that
\begin{equation}\label{eq:coarse_problem} 
M_{0} \frac{d p_{\text{ms}}}{d t} + A_{0} p_{\text{ms}} = b_0(p_{\text{ms}}),
\end{equation}
where $M_0$, $A_0$, and $b_0$ are obtained using the corresponding fine-scale matrices and vectors defined in \eqref{eq:fine_matrices_and_vectors} and the projection matrix $R_0$ as follows
\begin{equation}
M_0 = R_0 M R_0^T, \quad A_0 = R_0 A R_0^T, \quad b_0 = R_0 b.
\nonumber
\end{equation}

After solving the system \eqref{eq:coarse_problem}, we can go from a coarse-scale solution to a fine-scale solution also using the projection matrix $R_0$ and the solution $p_\text{ms}$
\begin{equation}
p \approx R_0^T p_{\text{ms}}, 
\nonumber
\end{equation}
where $p$ is the fine-scale coordinate vector.

\begin{figure}[!hbt]
\begin{center}
\begin{minipage}[h]{0.3\linewidth}
\center{\includegraphics[width=\linewidth]{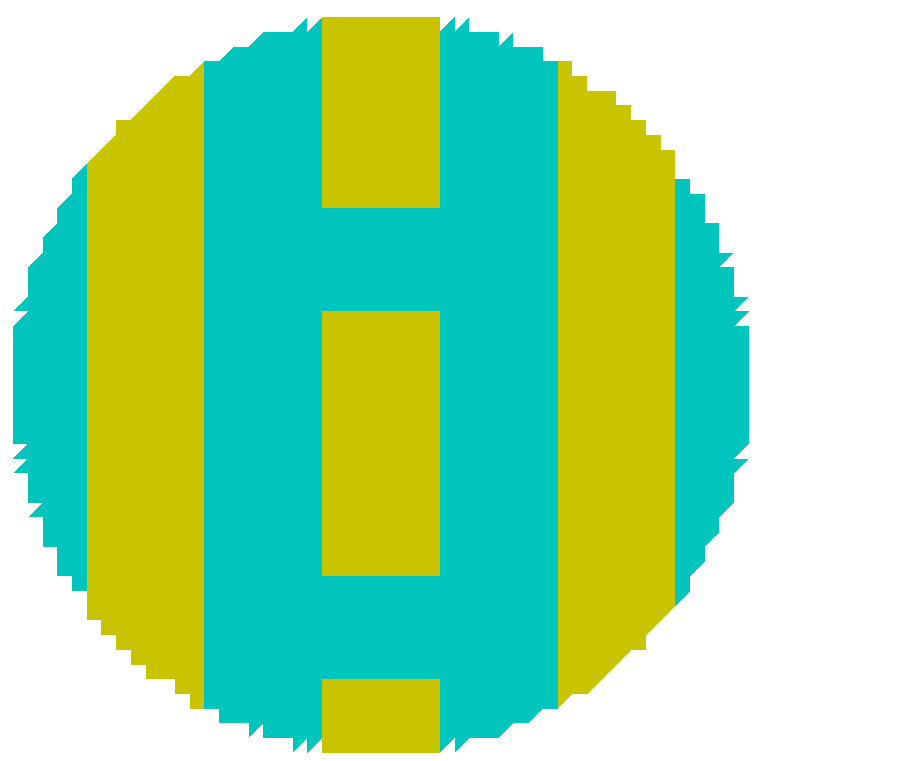}}
\end{minipage}
\begin{minipage}[h]{0.3\linewidth}
\center{\includegraphics[width=\linewidth]{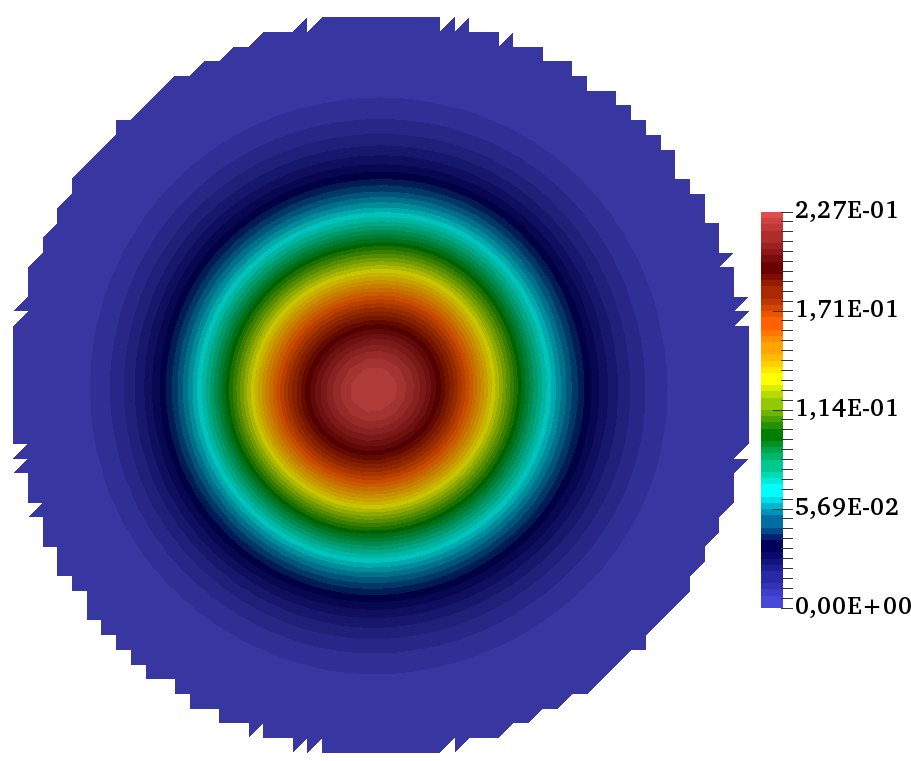}}
\end{minipage}\\
\begin{minipage}[h]{0.3\linewidth}
\center{\includegraphics[width=\linewidth]{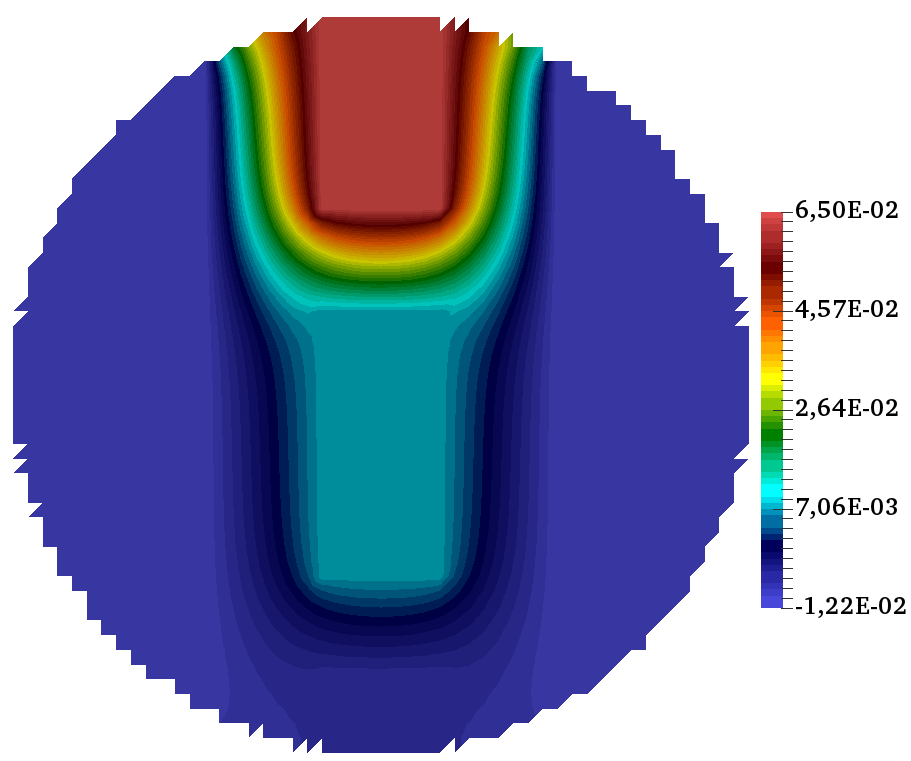}}
\end{minipage}
\begin{minipage}[h]{0.3\linewidth}
\center{\includegraphics[width=\linewidth]{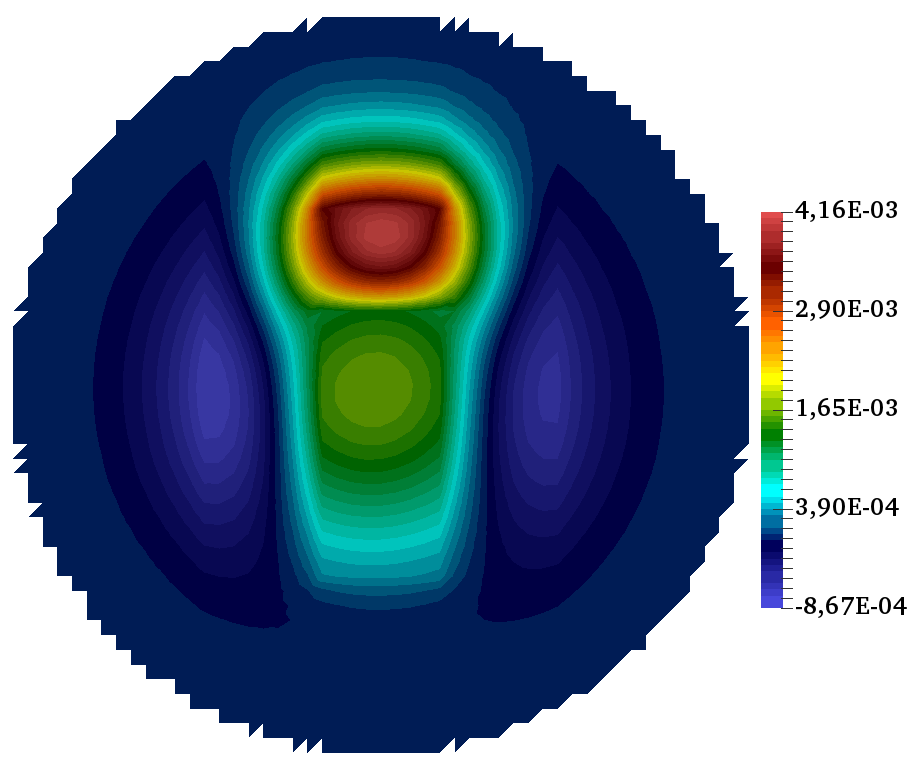}}
\end{minipage}
\end{center}
\caption{Illustration of high-contrast coefficient $\kappa(x)$ and multiscale functions at $\gamma = 4$. Top-left: $\kappa(x)$ in $S_i$. Top-right: shape function $W_i(x)$. Bottom-left: eigenfunction $\psi_k^{S_i}$. Bottom-right: multiscale basis function $\psi_{ik}^{\text{ms}}$.}
\label{ris:basis1}
\end{figure}

\subsection{The MFGMsFEM coarse interpolation}
In this section, we design and analyze a suitable coarse interpolant.  We follow the ideas in 
\cite{abreu2019convergence,egw10}.
Note that $\{W_i\}$ form a partition of unity subordinated to the covering $\{S_i\}_{i=1}^{N_v^H}$. Given a $u\in V$, 
we recall that 
\[u|_{S_i} = \sum_{k=1}^{N^{S_i}}\left(\int_{S_i}\kappa(x) u \psi_k^{S_i} dx\right) \psi_k^{S_i},
\]
where $N^{S_i}$ is the total number of vertices in $S_i$.
As in \cite{ge09_1,ge09_1reduceddim,abreu2019convergence}, due to eigenvector orthogonality properties, we have, 
\[
\int_{S_i}\kappa(x) |u|^2 dx=\sum_{k=1}^{N^{S_i}}\left(\int_{S_i}\kappa(x) u\psi_k^{S_i}dx \right)^2,
\]
and 
\[
\int_{S_i}\kappa(x)|\nabla u|^2dx=\sum_{k=1}^{N^{S_i}}\lambda_k^{S_i}\left(\int_{S_i}\kappa(x) u \psi_k^{S_i}dx \right)^2.
\]
Define coarse interpolation as the combination, weighted by the partition of unity, of projections on the space generated by the selected $N_b^{S_i}$ local eigenvectors. That is, we define, 
\[
I_{0}u =\sum_{i=1}^{N_v^H} \left(\sum_{k=1}^{N_b^{S_i}}
\left(\int_{S_i}\kappa(x) u\psi_k^{S_i}dx \right)\psi_k^{S_i}\right)W_i =\sum_{i=1}^{N_v^H}(I^{S_i} u)W_i,
\]
where 
\[
I^{S_i}u=\sum_{k=1}^{N^{S_i}_b}\left(\int_{S_i}\kappa(x)u\psi_k^{S_i}dx\right)\psi_k^{S_i}
\]
is the projection of $u|_{S_i}$ onto the space of the first $N_b^{S_i}$ eigenvectors. Then, 
\begin{equation}\label{eq:decompo}
u-I_{0}u =\sum_{i=1}^{N_v^H} \left( u-\sum_{k=1}^{N_b^{S_{i}}}
\left(\int_{S_{i}}\kappa(x) u\psi_k^{S_{i}}dx\right)\psi_k^{S_{i}}\right)W_{i} = \sum_{i=1}^{N_v^H} \left( \sum_{k=N_b^{S_{i}}+1}^{N^{S_{i}}}\left(\int_{S_{i}}\kappa(x)u\psi_k^{S_{i}}dx\right)\psi_k^{S_i}\right)W_{i}.
\end{equation}
Again, as in \cite{ge09_1,ge09_1reduceddim,abreu2019convergence}
and due to eigenvector orthogonality properties, we have 
\begin{equation}\label{eq:truncation}
\int_{S_{i}}\kappa(x)|u-I^{S_{i}} u|^2dx\leq\frac{1}{\lambda_{N_b^{S_{i}}+1}^{S_{i}}}\int_{S_{i}}\kappa(x)|\nabla u|^2dx.
\end{equation}
and 
\begin{equation}\label{eq:truncationgrad}
\int_{S_{i}} \kappa(x)|\nabla u-\nabla I^{S_{i}} u|^2dx\leq\frac{1}{\lambda_{N_b^{S_{i}}+1}^{S_{i}}}\sum_{k={N^{S_{i}}_{b}}+1}^{N^{S_{i}}}\left(\lambda_k^{S_{i}}\right)^2\left(\int_{S_{i}}\kappa u
\psi_k^{S_{i}}dx\right)^{2}\leq\frac{1}{\lambda_{N_{b}^{S_{i}}+1}^{S_{i}}} D_{i}(u),
\end{equation}
where 
\[
D_{i}(u)=\sum_{k=N_b^{S_{i}}+1}^{N^{S_{i}}}\left(\lambda_k^{S_{i}}\right)^2\left(\int_{S_{i}}\kappa u\psi_k^{S_i}dx\right)^2.
\]
We will assume that $D_i(u)$ is bounded, which, as explained in \cite{abreu2019convergence}, is basically a regularity assumption for the solution. For more explanations about such a quantity and how to relate it to local and global norms, see \cite{abreu2019convergence}.
In this case, we also have,
\begin{equation}\label{eq:truncationD}
\int_{S_{i}}\kappa(x)|u-I^{S_i} u|^2dx\leq \frac{1}{(\lambda_{N_b^{S_{i}}+1}^{S_{i}})^2}D_i(u).
\end{equation}
Following \cite{tw}, let 
\begin{equation} \label{eq:def:cov}
C_{\rm{ov}} = \max_{x\in \Omega} \# \{ i : x\in S_{i}\}.
\end{equation}
That is, $C_{\rm{ov}}$ is a bound for the maximum number of neighborhoods with a non-empty intersection of the overlapping partition $\{  S_{i}\}$. As in \cite{tw} 
(see Assumptions 2.3, 2.4, and 3.2 and Equation (2.16)  in \cite{tw} or Appendix A), we can write
\begin{align}
\int_\Omega \kappa(x)| u- I_{0}u|^2dx&\leq C_{\rm{ov}}\sum_{i=1}^{N_v^H}  \int_{S_{i}} \kappa(x) W_i^2| u- I^{S_i}u|^2dx \label{StrengthenedCS}\\
&\leq C_{\rm{ov}}\sum_{i=1}^{N_v^H}\int_{S_i}\kappa(x)| u- I^{S_i}u|^2dx \nonumber  \\
&\leq C_{\rm{ov}}\sum_{i=1}^{N_v^H}\frac{1}{\lambda_{N_b^{\tau_i}+1}^{S_{i}}}
\int_{S_{i}}\kappa(x)|\nabla u|^2dx \nonumber\\
&\leq C_{\rm{ov}} \max_{1\leq i\leq N_v^H}\frac{1}{\lambda_{N_b^{S_i}+1}^{S_i}}\sum_{i=1}^{N_v^H}\int_{S_{i}}\kappa(x)|\nabla u|^2dx \nonumber\\
&\leq C_{\rm{ov}}^2\max_{1\leq i\leq N_v^H}\frac{1}{\lambda_{N_b^{S_i}+1}^{S_i}}\int_{\Omega}\kappa(x)|\nabla u|^2dx. \nonumber
\end{align}
Analogously, 
\begin{equation}\label{eq:grad-interp}
\begin{split}
\int_{\Omega}\kappa(x)| \nabla u- \nabla I_{0}u|^2dx&\leq C_{\rm{ov}}\sum_{i=1}^{N_v^H}\int_{S_{i}} \kappa(x)|\nabla (W_i( u- I^{S_{i}}u))|^2dx\\
&\leq 2C_{\rm{ov}}\left(\sum_{i=1}^{N_v^H}\int_{S_{i}} \kappa(x) W_{i}^2|\nabla ( u- I^{S_{i}}u)|^2dx + \int_{S_{i}}\kappa(x)|\nabla W_{i}|^2| u- I^{S_{i}}u|^2dx\right)\\
&\leq 2C_{\rm{ov}}\left(\sum_{i=1}^{N_v^H} \int_{S_{i}}\kappa(x)|\nabla ( u- I^{S_{i}}u)|^2dx + \max_{x\in S_{i}} |\nabla W_{i}(x)|^2\int_{S_{i}} \kappa(x) | u- I^{S_{i}}u|^2dx\right)\\
&\leq 2C_{\rm{ov}}  \max_{1\leq i\leq N_v^H}\left\{ \frac{1}{\lambda^{S_{i}}_{N_b^{S_{i}}+1}},\frac{\max_{x\in S_{i}} |\nabla W_{i}(x)|^2}{\left(\lambda_{N_b^{S_{i}}+1}\right)^2}\right\}\sum_{i=1}^{N_v^H} D_{i}(u).
\end{split}
\end{equation}
We recall that $\lambda_{N_b^{S_{i}}+1}$ decreases and it scale as $H^2$ in two dimensions. Also, recall  that $|\nabla W_{i}|$ scales as $\delta^{-1}$, the inverse of overlapping parameter $\delta$,  where, see \cite{tw}, 
\[
\delta = \min_{1\leq i\leq N_v^H} \delta_{i},
\]
where 
\[
\delta_{i}=\min\left\{\|x-y\| : x\in S_{i}\setminus\bigcup_{\ell\neq i} S_{\ell},\quad y\in \partial S_{i}\right\}.
\]
\section{Temporal discretizations combined with MFGMsFEM spatial approximation}\label{sec:gmsfem_time_discretization}

This section describes temporal discretization methods for the ODE system equation \eqref{eq:cont_time_matrix_form}. In particular, we consider how the spatial approximation of the MFGMsFEM can be combined with finite difference and exponential integration for time discretization. The former approach is the most common but unstable in high-contrast multiscale media requiring minimal time stepping (\cite{poveda2024second, contreras2023exponential}). As a promising alternative, the second approach allows us to take full advantage of the spatial model reduction capabilities of the MFGMsFEM and achieve better stability properties.

\subsection{MFGMsFEM combined with finite difference (MFGMsFEM-FD)}\label{subsec:gmsfem_fd}

Various methods in the finite difference approach can be used for time discretization of the ODE system in \eqref{eq:cont_time_matrix_form}. This work considers the backward Euler method, which is most often used with GMsFEM. Similar to the fine-grid approximation (Section \ref{subsec:fine_grid_fd}), we use a uniform time grid $\omega_t = \{ t^n = n \tau, \ n = 0, 1, \ldots, N_{\text{t}}-1, \ \tau N_{\text{t}} = t_{\text{max}} \}$, where $\tau$ is a time step and $N_{\text{t}}$ is the number of time steps. Then, by replacing the time derivative with the finite difference, we obtain the following problem: Find $p_{\text{ms}}^{n} = [p_{\text{ms},1}^{n}, ..., p_{\text{ms},N_{\text{c}}}^{n}]$ such that

\begin{equation}\label{eq:coarse_fd_problem} 
M_{0} \frac{p_{\text{ms}}^{n} - p_{\text{ms}}^{n - 1}}{\tau} + A_{0} p_{\text{ms}}^{n} = b_0^{n},
\end{equation}
where $b_{0}^{n} = b_0(p_{{\text{ms}}}^{n})$ and $p_{\text{ms}}^{n} \approx p_{\text{ms}}(t^n)$.

Note that one can represent this discrete problem in the following form for the fine-grid solution $p^{n}_{\text{ms}}$
\begin{equation}\label{eq:coarse_fd_problem_2}
p^{n}_{\text{ms}}= R_0^T (M_0 + \tau A_0)^{-1} (M_0 p_{\text{ms}}^{n - 1} + \tau b_0^{n}), \quad n = 1, 2, \ldots, N_{\text{t}}.
\end{equation} 
or
\begin{equation}\label{eq:coarse_fd_problem_3}
p^{n}_{\text{ms}}= R_0^T (M_0 + \tau A_0)^{-1} R_0 (M p^{n - 1} + \tau b^{n}), \quad n = 1, 2, \ldots, N_{\text{t}},
\end{equation} 
which makes sense since the real bottleneck of the fine-scale problem is computing the solution of the linear system with matrix $M + \tau A$. However, as the results for the standard GMsFEM showed in \cite{contreras2023exponential}, the finite difference temporal approximation can be unstable and requires small time steps to approximate the fine-scale solution accurately.

\subsection{MFGMsFEM  combined with exponential integrator (MFGMsFEM-EI)}\label{subsec:gmsfem_ei}

We propose exponential integration as an alternative time approximation approach. Based on \cite{hochbruck2010exponential, contreras2023exponential,poveda2024second}, let us give a brief outlook of this method. Let us consider the fine-scale  system in \eqref{eq:cont_time_matrix_form} and transform it into the following form
\begin{equation}\label{eq:cont_time_matrix_form_2}
\begin{split}
\frac{d p}{d t} + N p &= F(p),\\
p(0) &= p_0,
\end{split}
\end{equation}
where $N = M^{-1} A$, $F = M^{-1} b(p)$, and $p_0 = M^{-1} \hat{p}$.

This matrix ODE system has an exact solution, expressed using matrix exponential as follows
\begin{equation}\label{eq:variation_of_constants}
p (t^n) = e^{-\tau N} p(t^{n - 1}) + \int_{0}^{\tau} e^{(l - \tau) N} F(p(t^{n - 1} + l)) dl.
\end{equation}

This solution representation is also called the variation of constants. Next, we need to obtain an approximation of the nonlinear term by the interpolation polynomial on quadrature nodes. We approximate the integral using exponential quadrature rule with $s$ quadrature points $c_i \in [0, 1]$ in the following way \cite{hochbruck2010exponential}
\begin{equation}\label{eq:exp_quadrature_rule}
p^n = e^{-\tau N} p^{n - 1} + \tau \sum_{i = 1}^{s} b_i (-\tau N) F_i,
\end{equation}
where $p^n \approx p(t^n)$, $F_i = F(p(t^{n - 1} + c_i \tau))$, and the coefficients $b_i$ are following
\begin{equation}
\nonumber
b_i (-\tau N) = \int_{0}^{1} e^{-\tau (1 - \theta) N} l_i(\theta) d\theta.
\end{equation}

Here, $l_i$ denotes the Lagrange interpolation polynomials. Note that one can represent the coefficients $b_i (z)$ as linear combinations of $\varphi$-functions
\begin{equation}\label{eq:varphi_definition}
\begin{split}
\varphi_0 (z) = e^z, \quad
\varphi_{p + 1} (z) = \frac{1}{z} \left( \varphi_p(z) - \frac{1}{p!} \right).
\end{split}
\end{equation}

Moreover, one can obtain the following recurrence relation for $\varphi$-functions
\begin{equation}\label{eq:varphi_definition_2}
\varphi_p (z) = \int_0^1 e^{(1 - \theta) z} \frac{\theta^{p - 1}}{(p - 1)!} d\theta, \quad p \geq 1.
\end{equation}

For simplicity, let us consider a scheme with only one quadrature node, i.e., $s = 1$ and $c_1 = 0$. Therefore, we have a constant interpolation polynomial and get $b_1(-\tau N) = \int_0^1 e^{-\tau (1 -\theta) N} d \theta = \varphi_1(-\tau N)$, according to \eqref{eq:varphi_definition_2}. Then, using $e^z = z \varphi_1(z) + 1$ and $F_1 \approx F^{n - 1} = F(p^{n - 1})$ in \eqref{eq:exp_quadrature_rule}, we obtain
\begin{equation}\label{eq:exp_euler_scheme}
p^n = p^{n - 1} + \tau \varphi_1 (-\tau N) (F^{n - 1} - N p^{n - 1}),
\end{equation}
which is called the exponential Euler method.

Note that, using the symmetric and positive definite properties of $A$ and $M$, one can show \cite{contreras2023exponential}
\begin{equation}
\nonumber
-\tau N = Q D Q^T M
\end{equation}
and
\begin{equation}
\nonumber
\varphi_p(-\tau N) = Q \varphi_p (D) Q^T M,
\end{equation}
where $Q$ is a matrix, which columns are eigenvectors of $-\tau N q = \lambda q$ or $-\tau A q = \lambda M q$.

In this way, we obtain the following form of the exponential integration scheme
\begin{equation}\label{eq:exp_euler_scheme_2}
p^n = p^{n - 1} + \tau Q \varphi_1(D) Q^T (M F^{n - 1} - A p^{n - 1}).
\end{equation}

\corr{Note that the exponential Euler method is impractical for multiscale heterogeneous problems due to the high computational cost of computing matrix functions. However, the MFGMsFEM allows us to significantly reduce the problem dimensionality, making it possible to use the exponential integration even for problems in complex heterogeneous media with high contrast.}

We propose the approximation of the $\varphi$-functions
\begin{equation}
\nonumber
\varphi_p(-\tau N) \approx R_0^T \varphi_p (-\tau N_0) R_0,
\end{equation}
where $N_0 = M_0^{-1} A_0$. We also have the following eigenvalue decomposition
\begin{equation}\label{eq:mfgsmfem_ei_eigenvalue}
-\tau N_0 = Q_0 D_0 Q_0^T M_0,
\end{equation}
where $Q_0$ are the matrix, which columns are the eigenvectors of $-\tau A_0 q_0 = \lambda M_0 q_0$.

Therefore, the MFGMsFEM-EI scheme is given by
\begin{equation}\label{eq:mgsfem_ei}
p^n = p^{n - 1} + \tau R_0^T \varphi_1 (-\tau N_0) R_0 (F^{n - 1} - N p^{n - 1}).
\end{equation}

There are various ways to calculate matrix functions. One of the popular ways is Pad\'e approximations. One can also apply the eigenvalue decomposition approach \eqref{eq:mfgsmfem_ei_eigenvalue}. In the latter case, the exponential integration will have the following form
\begin{equation}\label{eq:mgmsfem_ei_2}
p^n = p^{n - 1} + \tau R_0^T Q_0 \varphi_1 (D_0) Q_0^T R_0 (MF^{n - 1} - A p^{n - 1}).
\end{equation}

Therefore, the algorithm of the MFGMsFEM-EI consists of the following steps:
\begin{enumerate}
\item Compute the fine-scale residual $r_h^{n - 1} = M F^{n - 1} - A p^{n - 1}$;
\item Upscale the residual $r_H^{n - 1} = R_0 r_h^{n - 1}$.
\item Compute the coarse-scale function of matrix $\delta_H^{n - 1} = Q_0 \varphi_1(D_0) Q_0^T r_H^{n - 1}$.
\item Find the fine-scale solution $p^{n} = p^{n - 1} + \tau R_0^T \delta_H^{n - 1}$.
\end{enumerate}

It should be noted that since we consider time evolution on a coarse scale, it may be helpful to use the orthogonal projection $\hat{p}_0 = R_0^T M_0^{-1} R_0 M p_0$ as the initial condition.

\section{Convergence analysis}\label{sec:convergence}
In this section, we focus on error estimates of fully discrete solutions given by the MFGMsFEM-EI for solving the semilinear parabolic problem with Dirichlet boundary conditions. First, we shall introduce some notations and usual approximation results to estimate the error bound. Let us define the following norms for our analysis
\[
\|p\|_{a_{i}}^{2}:=\int_{S_{i}}k|\grad p|^{2}dx,\quad\|p\|_{m_{i}}^{2}:=\int_{S_{i}}k(x)|p|^{2}dx.
\]
In addition, we regard some regularity and growth conditions for solution $p$ and reaction term $f$ to carry out the error analysis of our proposed method, see \cite{thomee2006galerkin}.

\begin{asp}
\label{asp:01}
The exact solution $p(t)$ satisfies the following regularity conditions:
\begin{subequations}
\begin{align}
\sup_{0\leq t\leq T}\|p(\cdot,t)\|_{H^{2}}&\preceq 1,\label{eq:asp3-1}\\
\sup_{0\leq t\leq T}\|\partial_{t} p(\cdot,t)\|_{L^{\infty}}&\preceq 1,\label{eq:asp3-2}\\
\sup_{0\leq t\leq T}\|\partial^{2}_{tt} p(\cdot,t)\|_{L^{\infty}}&\preceq 1\label{eq:asp3-3},
\end{align}
\end{subequations}
where the hidden constants may depend on $T$.
\end{asp}

\begin{asp}
\label{asp:02}
The function $f(p)$ grows mildly regarding $p$, i.e., there exists a number $\ell>0$ for $d=1,2$ or $\ell\in(0,2]$ for $d=3$ such that
\begin{equation}
|f'(p)|\preceq 1 + \|p\|^{\ell},\quad\mbox{for all } p\in\mathbb{R}.
\end{equation}
\end{asp}
Finally, we have the following result on the locally-Lipschitz continuity of the reaction term $f$.
\begin{lem}
\label{lem:local-f}
Suppose that the function $f$ satisfies Assumption \ref{asp:02}, and the exact solution $p(\cdot,t)$ satisfies Assumption \ref{asp:01}. Then, $f$ is locally-Lipschitz continuous in a strip along the exact solution $p(\cdot,t)$,~i.e., for any given positive constant $C$,
\begin{equation}
\label{eq:lem2-1}
\|f(v)-f(w)\|_{L^{2}}\preceq \|v-w\|_{H^{1}},
\end{equation}
for any $t\in[0,T]$ and $v,w\in V$ satisfying $\max\{\|(v-p(t)\|_{H^{1}},\|w-p(t)\|_{H^{1}}\}\leq C$, where the hidden constant in \eqref{eq:lem2-1} may depend on $C$.
\end{lem}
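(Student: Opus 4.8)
The plan is to reduce \eqref{eq:lem2-1} to a pointwise growth bound on $f'$, Hölder's inequality, and the Sobolev embedding $H^{1}(\Omega)\hookrightarrow L^{q}(\Omega)$ on the bounded (Lipschitz) domain $\Omega$, using the strip hypothesis together with Assumption \ref{asp:01} to make $\|v\|_{H^{1}}$ and $\|w\|_{H^{1}}$ bounded by a constant depending only on $C$ (note $\|p(t)\|_{H^{1}}\preceq\|p(t)\|_{H^{2}}\preceq 1$).

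First I would use that $f\in C^{1}$ to write, pointwise in $\Omega$ and for $v,w\in V$,
\[
f(v)-f(w)=\Big(\int_{0}^{1}f'\big(w+s(v-w)\big)\,ds\Big)(v-w),
\]
and then apply Hölder's inequality with exponents $q,r\in(2,\infty)$ such that $\tfrac12=\tfrac1q+\tfrac1r$:
\[
\|f(v)-f(w)\|_{L^{2}}\le\Big\|\int_{0}^{1}\big|f'(w+s(v-w))\big|\,ds\Big\|_{L^{q}}\;\|v-w\|_{L^{r}}.
\]

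Next I would estimate the first factor. By Assumption \ref{asp:02}, $|f'(\xi)|\preceq 1+|\xi|^{\ell}$; since $w+s(v-w)$ is a convex combination of $v$ and $w$, the elementary bound $|a+b|^{\ell}\le c_{\ell}(|a|^{\ell}+|b|^{\ell})$ (with $c_{\ell}=1$ for $0<\ell\le1$ and $c_{\ell}=2^{\ell-1}$ for $\ell\ge1$) gives $|f'(w+s(v-w))|\preceq 1+|v|^{\ell}+|w|^{\ell}$ uniformly in $s\in[0,1]$, so the $L^{q}$-factor is $\preceq 1+\|v\|_{L^{\ell q}}^{\ell}+\|w\|_{L^{\ell q}}^{\ell}$. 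I then invoke the Sobolev embedding: for $d=1,2$ one has $H^{1}(\Omega)\hookrightarrow L^{p}(\Omega)$ for every finite $p$, so any $\ell>0$ is admissible after choosing $r$ large (hence $q$ close to $2$); for $d=3$, $H^{1}(\Omega)\hookrightarrow L^{6}(\Omega)$, and the pairing $r=6$, $q=3$ requires $\ell q=3\ell\le 6$, i.e. $\ell\in(0,2]$, which is exactly the restriction in Assumption \ref{asp:02}. Finally, since $\|v\|_{H^{1}}\le\|v-p(t)\|_{H^{1}}+\|p(t)\|_{H^{1}}\le C+C_{1}$ and similarly for $w$, the first factor is bounded by a constant $C_{2}=C_{2}(C)$; combined with $\|v-w\|_{L^{r}}\preceq\|v-w\|_{H^{1}}$ this yields \eqref{eq:lem2-1}.

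The routine parts are the elementary power inequality and the explicit Sobolev constants. The one place that needs care is the bookkeeping of the Hölder exponents so that $H^{1}\hookrightarrow L^{\ell q}$ and $H^{1}\hookrightarrow L^{r}$ hold simultaneously; this is precisely what forces the dimension-dependent constraint on $\ell$, and for $d=3$ the margin is tight (the choice $r=6$, $q=3$ is the only workable pairing at the Sobolev endpoint), so that is the main obstacle — and essentially the only one.
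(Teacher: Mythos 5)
Your proof is correct. Note that the paper itself states Lemma \ref{lem:local-f} without proof, deferring to the standard literature (the preceding paragraph cites \cite{thomee2006galerkin}), so there is no in-paper argument to compare against; your chain --- the integral mean value theorem, H\"older with $\tfrac12=\tfrac1q+\tfrac1r$, the growth bound on $f'$, and the Sobolev embeddings $H^{1}\hookrightarrow L^{\ell q}$ and $H^{1}\hookrightarrow L^{r}$ combined with the strip hypothesis and $\|p(t)\|_{H^{1}}\preceq\|p(t)\|_{H^{2}}\preceq 1$ --- is exactly the standard argument that citation stands in for, and your exponent bookkeeping (in particular $r=6$, $q=3$ forcing $\ell\le 2$ when $d=3$) correctly accounts for the dimension-dependent restriction in Assumption \ref{asp:02}.
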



In the following, we derive the error estimate between the exact solution $p(\cdot,t_{n})$ and the fully discrete multiscale solution $p^{n}_{\text{ms}}$ using the framework of \cite{abreu2019convergence,chung2018constraint}. For simplicity of presentation, we shall assume that the partition is uniform in $I$ with time step $\tau$. Let $p_{\text{ms}}(t)$ be the multiscale solution of the semi-discrete problem \eqref{eq:var_form_ms}, and $p^{n}_{\text{ms}}$ the multiscale fully discrete solution produced by the exponential integrator multiscale finite element method \eqref{eq:mgsfem_ei}.

Let $\hat{p}\in V_{0}$ be the Ritz projection of the solution $p$ that satisfies
\begin{equation}
\label{eq:proj_p}
a((p-\hat{p})(t),v)=0,\quad\mbox{for each }v\in V_{0},\mbox{and } t\in I.
\end{equation}
The following lemma gives the error estimate of $\hat{p}(t)$ for the semi-parabolic problem.

\begin{lem}
\label{lem:esti-proj}
Let $p$ be the solution of \eqref{eq:fine_sys}. For each $t\in[0,T]$, we define the Ritz projection $\hat{p}\in V_{0}$ satisfies \eqref{eq:proj_p}. Then,
\begin{subequations}
\begin{align}
\|(p-\hat{p})(t)\|_{H^{1}}&\leq C C_{\rm{ov}}^{1/2}\kappa_{\min}^{-1/2}\max_{1\leq i\leq N_v^H}\left(\lambda_{N_b^{S_{i}}+1}^{S_{i}}\right)^{-1/2},\label{eq:lemE1-01}\\
\|(p-\hat{p})(t)\|_{L^{2}}& \leq C C_{\rm{ov}}\kappa_{\min}^{-1}\max_{1\leq i\leq N_v^H}\left(\lambda_{N_b^{S_{i}}+1}^{S_{i}}\right)^{-1},\label{eq:lemE1-02}
\end{align}
\end{subequations}
where $C:=C\left(p,\tfrac{\partial p}{\partial t},f\right)$ is a constant dependent on $p,\tfrac{\partial p}{\partial t}$ and $f$.
\end{lem}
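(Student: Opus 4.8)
The plan is to estimate the Ritz projection error by comparing $\hat p$ with the coarse interpolant $I_0 p$ built in Section \ref{sec:meshfree_gmsfem}, and then invoking the interpolation bounds \eqref{StrengthenedCS} and \eqref{eq:grad-interp} already established in the excerpt. First I would note that, by Galerkin orthogonality \eqref{eq:proj_p} and coercivity of $a(\cdot,\cdot)$, the Ritz projection is the $a$-orthogonal projection of $p(t)$ onto $V_0$, hence it is quasi-optimal in the energy norm: $\|(p-\hat p)(t)\|_a \le \|(p-I_0 p)(t)\|_a$ since $I_0 p(t)\in V_0$. Then I would chain this with the Céa-type inequality $\kappa_{\min}^{1/2}\|\nabla w\|_{L^2} \le \|w\|_a$ to pass from the $a$-norm to the $H^1$-seminorm, and combine with \eqref{eq:grad-interp} (using that $\lambda_{N_b^{S_i}+1}$ scales like $H^2$ and $|\nabla W_i|$ like $\delta^{-1}$, so the bracketed maximum is controlled by $\max_i (\lambda_{N_b^{S_i}+1}^{S_i})^{-1}$ up to the overlap constant and the bounded quantity $\sum_i D_i(p)$) to absorb everything into the constant $C = C(p,\partial_t p, f)$. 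This yields \eqref{eq:lemE1-01}.

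For the $L^2$ estimate \eqref{eq:lemE1-02} I would use a standard Aubin--Nitsche duality argument. Introduce the dual problem $-\div(\kappa\nabla z) = (p-\hat p)(t)$ in $\Omega$ with homogeneous Dirichlet data, whose solution satisfies the regularity bound $\|z\|_{H^2}\preceq \|(p-\hat p)(t)\|_{L^2}$ (equivalently $\|z\|_a \preceq \kappa_{\min}^{-1/2}\|(p-\hat p)(t)\|_{L^2}$ after accounting for $\kappa_{\min}$). Testing with $v = p-\hat p$ and using Galerkin orthogonality to subtract $I_0 z \in V_0$, one gets
\begin{equation}
\|(p-\hat p)(t)\|_{L^2}^2 = a\big((p-\hat p)(t),\, z - I_0 z\big) \le \|(p-\hat p)(t)\|_a\,\|z-I_0 z\|_a .
\nonumber
\end{equation}
Applying \eqref{eq:grad-interp} to both factors — once for $p(t)$ (already bounded by the $H^1$ estimate, which carries one factor of $\max_i(\lambda_{N_b^{S_i}+1}^{S_i})^{-1/2}$ and $C_{\rm ov}^{1/2}\kappa_{\min}^{-1/2}$) and once for $z$ (which carries the second factor of $\max_i(\lambda_{N_b^{S_i}+1}^{S_i})^{-1/2}$, one more $C_{\rm ov}^{1/2}$, and the extra $\kappa_{\min}^{-1/2}$ from the duality regularity) — and cancelling one power of $\|(p-\hat p)(t)\|_{L^2}$ gives the claimed $C\,C_{\rm ov}\,\kappa_{\min}^{-1}\max_i(\lambda_{N_b^{S_i}+1}^{S_i})^{-1}$ bound.

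The main obstacle I anticipate is making the duality step fully rigorous in the high-contrast, low-regularity setting: the dual solution $z$ need not lie in $H^2(\Omega)$ when $\kappa$ is genuinely multiscale and discontinuous, so the naive elliptic-regularity estimate $\|z\|_{H^2}\preceq\|(p-\hat p)\|_{L^2}$ is not available. The clean way around this, consistent with the framework of \cite{abreu2019convergence}, is to bound $\|z-I_0 z\|_a$ directly through the weighted quantity $D_i(z)$ appearing in \eqref{eq:grad-interp} and \eqref{eq:truncationD} — i.e., to assume (as the excerpt already does for $p$) that $\sum_i D_i(z)$ is controlled by $\|(p-\hat p)(t)\|_{L^2}^2$, which is the natural "regularity" surrogate in this context — rather than invoking classical $H^2$ bounds. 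Absorbing the permeability contrast correctly (tracking where each $\kappa_{\min}^{-1/2}$ enters, from coercivity versus from the duality norm conversion) is the other bookkeeping point that needs care; everything else is a routine combination of \eqref{StrengthenedCS}, \eqref{eq:grad-interp}, and quasi-optimality.
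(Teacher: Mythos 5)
Your proposal is correct in outline but reaches \eqref{eq:lemE1-01} by a genuinely different route from the paper. You invoke quasi-optimality of the Ritz projection in the energy norm and then bound the best-approximation error by the interpolant $I_{0}p$ via \eqref{eq:grad-interp}. The paper never touches the interpolant at this stage: it tests the equation with $v=p-\hat p$, uses the orthogonality \eqref{eq:proj_p} to write $a(p-\hat p,p-\hat p)=a(p,p-\hat p)=F(p;p-\hat p)-m(\partial_{t}p,p-\hat p)\le\kappa_{\min}^{-1/2}\|f(p)-\partial_{t}p\|_{L^{2}}\|p-\hat p\|_{m}$, and then converts the $\kappa$-weighted $L^{2}$ norm of the error back to the energy norm through the eigenfunction-orthogonality estimate $\|p-\hat p\|_{m}^{2}\le C_{\rm{ov}}\max_{i}\bigl(\lambda_{N_b^{S_{i}}+1}^{S_{i}}\bigr)^{-1}\|p-\hat p\|_{H^{1}}^{2}$. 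This is where the stated constant $C(p,\partial_{t}p,f)$ comes from ($\|f(p)\|_{L^{2}}$ via Lemma \ref{lem:local-f} and $\|\partial_{t}p\|_{L^{\infty}}$ via Assumption \ref{asp:01}); your constant would instead carry $\sum_{i}D_{i}(p)$, which the paper assumes bounded, so both are admissible, with the paper's version yielding only one factor of the spectral gap and no partition-of-unity gradients. One caveat on your route: the bracketed maximum in \eqref{eq:grad-interp} contains $\max_{x\in S_{i}}|\nabla W_{i}(x)|^{2}\bigl(\lambda_{N_b^{S_{i}}+1}\bigr)^{-2}$, and reducing this to $\max_{i}\bigl(\lambda_{N_b^{S_{i}}+1}^{S_{i}}\bigr)^{-1}$ requires $|\nabla W_{i}|^{2}\preceq\lambda_{N_b^{S_{i}}+1}^{S_{i}}$, i.e., a generous-overlap condition of the type $\delta\gtrsim H$; you should state this hypothesis explicitly rather than absorb it silently into the constant. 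For \eqref{eq:lemE1-02} your duality argument matches the paper's in substance: the paper also avoids classical $H^{2}$ regularity by applying the already-established $H^{1}$ bound to the (Ritz projection of the) dual problem with data $p-\hat p$, so that the ``regularity surrogate'' is simply $\|p-\hat p\|_{L^{2}}$ --- essentially the device you propose with $D_{i}(z)$.
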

\begin{proof}
Note that $p\in V$ satisfies
\[
a(p,v)=F(p;v)-m\left(\frac{\partial p}{\partial t} ,v\right),\quad\mbox{for each  } v\in \widehat{V},\quad\mbox{for all } t\in I.
\]
Thus, let $\widehat{p}(t)$ be the Ritz projection of $p(t)$ in $V_{0}$ fulfills \eqref{eq:proj_p} for $t\in I$, then 
\[
a(p,v)=a(\hat{p},v)=F(p;v)-m\left(\frac{\partial p}{\partial t},v\right),\quad\mbox{for each }v\in V_{0},\quad\mbox{for all } t\in I.
\]
Putting $v=p-\hat{p}$ in the expression above, we obtain that
\begin{align*}
\|p-\hat{p}\|_{H^{1}}^{2}=a(p-\hat{p},p-\hat{p})=a(p,p-\hat{p})&\leq \left\|F(p;p-\hat{p})-m\left(\frac{\partial p}{\partial t},p-\hat{p}\right)\right\|_{L^{2}}\\
&\leq \kappa^{-1/2}_{\min}\left\|\left(F(p)-\frac{\partial p}{\partial t}\right)\right\|_{L^2}\|p-\hat{p}\|_{m}.
\end{align*}

Notice that, by using the orthogonality of the eigenfunctions $\psi^{S_{i}}_{k}$ of \eqref{eq:strong_eig} and following \cite{abreu2019convergence}, we arrive at
\[
\|p-\hat{p}\|_{m}^{2}\leq C_{\rm{ov}}\left(\lambda_{N_b^{S_{i}}+1}^{S_{i}}\right)^{-1}\|p-\hat{p}\|_{H^{1}}^{2}.
\]
Thus, by gathering the two expressions above, we have that
\[
\|p-\hat{p}\|_{H^{1}}\leq C_{\rm{ov}}^{1/2}\kappa_{\min}^{-1/2}\max_{1\leq i\leq N_v^H}\left(\lambda_{N_b^{S_{i}}+1}^{S_{i}}\right)^{-1/2}\left\|\left(F(p)-\frac{\partial p}{\partial t}\right)\right\|_{L^{2}}.
\]
By using $f(0)=0$, Lemma~\ref{lem:local-f}, and that $p$ satisfies the Assumption \ref{asp:01}, we obtain that $\|f(p)\|_{L^{2}}=\|f(p)-f(0)\|_{L^{2}}\preceq1$. Now, using \eqref{eq:asp3-2} and \eqref{eq:grad-interp}, we find that
\[
\|p-\hat{p}\|_{H^{1}}\leq C C_{\rm{ov}}^{1/2}\kappa_{\min}^{-1/2}\max_{1\leq i\leq N_v^H}\left(\lambda_{N_b^{S_{i}}+1}^{S_{i}}\right)^{-1/2},
\]
where $C=C(p,\tfrac{\partial p}{\partial t},f)$ is a constant depending on $p,\tfrac{\partial p}{\partial t}$ and $f$. Then, we arrive at \eqref{eq:lemE1-01}.  Now, using the duality argument for \eqref{eq:lemE1-02}. For $t\in I$, we define $w\in V_{0}$ such that
\[
a(w,v)=(p-\hat{p},v),\quad\mbox{for each } v\in V_{0},
\]
and define $\hat{w}$ as the Ritz projection of $w$ in the space $V_{0}$, that is,
\[
a(\hat{w},v)=(p-\hat{p},v),\quad\mbox{for each } v\in V_{0}.
\]
By using \eqref{eq:lemE1-01}, we obtain 
\begin{align*}
\|p-\hat{p}\|_{L^{2}}^{2}=a(w,p-\hat{p})&=a(w-\hat{w},p-\hat{p})\\
&\leq \|w-\hat{w}\|_{H^{1}}\|p-\hat{p}\|_{H^{1}}\\ 
&\leq C\left(C_{\rm{ov}}^{1/2}\kappa_{\min}^{-1/2}\max_{1\leq i\leq N_v^H}\left(\lambda_{N_b^{S_{i}}+1}^{S_{i}}\right)^{-1/2}\|p-\hat{p}\|_{L^{2}}\right)\\
&\quad\times\left(C_{\rm{ov}}^{1/2}\kappa_{\min}^{-1/2}\max_{1\leq i\leq N_v^H}\left(\lambda_{N_b^{S_{i}}+1}^{S_{i}}\right)^{-1/2}\right)\\
&\leq CC_{\rm{ov}}\kappa_{\min}^{-1}\max_{1\leq i\leq N_v^H}\left(\lambda_{N_b^{S_{i}}+1}^{S_{i}}\right)^{-1}\|p-\hat{p}\|_{L^{2}}.
\end{align*}
Hence, dividing by $\|p-\hat{p}\|_{L^{2}}$, we obtain \eqref{eq:lemE1-02}. This completes the proof.
\end{proof}

\begin{thm}
\label{thm:estiH1}
Let $p$ be the solution of \eqref{eq:fine_sys} and satisfy Assumption \ref{asp:01}. Assume that the function $f$ fulfills Assumption~\ref{asp:02} and Lemma~\ref{lem:local-f}. We define the elliptic projection $\hat{p}\in V_{0}$ satisfies \eqref{eq:proj_p}. Then, the multiscale solution $p_{\rm{ms}}(t)$ with initial condition $p_{\rm{ms}}(0)$ exists for $t\in[0,T]$ and holds
\begin{subequations}
\begin{align}
\left\|(p-p_{\rm{ms}})(t)\right\|_{H^{1}}&\leq \|(p-p_{\rm{ms}})(0)\|_{H^{1}}+C\left\{C_{\rm{ov}}\kappa_{\min}^{-1}\max_{1\leq i\leq N_v^H}\left(\lambda_{N_b^{S_{i}}+1}^{S_{i}}\right)^{-1}\right.\nonumber\\
&\quad\left.+C_{\rm{ov}}^{1/2}\kappa_{\min}^{-1/2}\max_{1\leq i\leq N_v^H}\left(\lambda_{N_b^{S_{i}}+1}^{S_{i}}\right)^{-1/2}\right\},\label{eq:thmH1main}\\
\left\|(p-p_{\rm{ms}})(t)\right\|_{L^{2}}&\leq \|(p-p_{\rm{ms}})(0)\|_{L^{2}}+CC_{\rm{ov}}\kappa_{\min}^{-1}\max_{1\leq i\leq N_v^H}\left(\lambda_{N_b^{S_{i}}+1}^{S_{i}}\right)^{-1},\label{eq:thmL2main}
\end{align}
\end{subequations}
where constant $C>0$ depends on $T$.
\end{thm}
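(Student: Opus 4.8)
The plan is to use the standard Thomée-type splitting $p - p_{\mathrm{ms}} = (p - \hat{p}) + (\hat{p} - p_{\mathrm{ms}}) =: \rho + \vartheta$, where $\hat{p}\in V_0$ is the Ritz projection defined in \eqref{eq:proj_p}. The first term $\rho$ is controlled directly by Lemma~\ref{lem:esti-proj}, which already supplies the $H^1$ and $L^2$ bounds appearing in \eqref{eq:thmH1main}--\eqref{eq:thmL2main}. The work is therefore to estimate $\vartheta = \hat{p} - p_{\mathrm{ms}} \in V_0$. First I would derive the error equation for $\vartheta$: subtracting the semi-discrete multiscale problem \eqref{eq:var_form_ms} from the variational equation satisfied by $p$, and using the defining property of the Ritz projection $a(\rho, v) = 0$ for $v\in V_0$, one gets
\begin{equation}
\nonumber
m(\partial_t \vartheta, v) + a(\vartheta, v) = -m(\partial_t \rho, v) + \big(F(p;v) - F(p_{\mathrm{ms}};v)\big), \quad v\in\widehat{V}_0.
\end{equation}

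Next I would test with $v = \vartheta$ and with $v = \partial_t\vartheta$ (or, for the $H^1$ bound, test the time-differentiated equation, or equivalently use an energy argument on $a(\vartheta,\vartheta)$) to obtain a differential inequality. The reaction term is handled by Lemma~\ref{lem:local-f}: since $f$ is locally Lipschitz in a tube around the exact solution, $\|F(p;\cdot) - F(p_{\mathrm{ms}};\cdot)\|_{L^2} \preceq \|p - p_{\mathrm{ms}}\|_{H^1} \leq \|\rho\|_{H^1} + \|\vartheta\|_{H^1}$, and $\|\rho\|_{H^1}$ is again absorbed via Lemma~\ref{lem:esti-proj}. The term $m(\partial_t\rho, v)$ is bounded using $\|\partial_t\rho\|_{L^2}$, which one estimates by differentiating \eqref{eq:proj_p} in time and reapplying the interpolation bounds of Section~3.4 (here Assumption~\ref{asp:01}, in particular \eqref{eq:asp3-2}, guarantees the needed regularity of $\partial_t p$); this produces a contribution of the same order $C_{\mathrm{ov}}\kappa_{\min}^{-1}\max_i(\lambda_{N_b^{S_i}+1}^{S_i})^{-1}$. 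Collecting terms gives an inequality of the form $\tfrac{d}{dt}\|\vartheta\|_{\star}^2 \leq C\|\vartheta\|_{\star}^2 + C\,(\text{spectral terms})^2$, and Grönwall's lemma on $[0,T]$ closes the estimate, with the initial contribution $\|\vartheta(0)\|$ combining with $\|\rho(0)\|$ to give the $\|(p-p_{\mathrm{ms}})(0)\|$ terms on the right-hand sides (the constant $C$ then carries the $e^{CT}$ factor, hence its dependence on $T$). The $L^2$ estimate \eqref{eq:thmL2main} follows from the same scheme, testing with $\vartheta$ alone and using only the weaker $L^2$-type bounds on $\rho$ and $\partial_t\rho$.

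The main obstacle I anticipate is making the local-Lipschitz step rigorous: Lemma~\ref{lem:local-f} only applies when $p_{\mathrm{ms}}(t)$ stays within a fixed $H^1$-tube of $p(t)$, so the argument is really a continuation/bootstrap argument — one assumes $\|p_{\mathrm{ms}}(t) - p(t)\|_{H^1}\leq C$ on a maximal subinterval, runs the Grönwall estimate there, and then checks that the resulting bound (being $o(1)$ as the spectral gaps grow, or at least smaller than $C$) is strict, so the subinterval cannot be maximal unless it is all of $[0,T]$. This also explains why the statement says the multiscale solution "exists for $t\in[0,T]$" — existence and the bound are proved together. A secondary technical point is the time-differentiated Ritz projection estimate for $\|\partial_t\rho\|$, which needs $a(\partial_t\rho, v)=0$ and hence an argument parallel to the proof of Lemma~\ref{lem:esti-proj} applied to $\partial_t p$ in place of $p$; this is where Assumptions~\ref{asp:01}\eqref{eq:asp3-2}--\eqref{eq:asp3-3} enter. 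Everything else is routine parabolic energy estimation.
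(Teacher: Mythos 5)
Your proposal follows essentially the same route as the paper: the splitting $p-p_{\rm ms}=\rho+\theta$ with the Ritz projection, the error equation $m(\partial_t\theta,v)+a(\theta,v)=F(p;v)-F(p_{\rm ms};v)-m(\partial_t\rho,v)$, testing with $\partial_t\theta$ for the $H^1$ bound and with $\theta$ for the $L^2$ bound, handling the reaction term via Lemma~\ref{lem:local-f}, and closing with Gr\"onwall plus the triangle inequality. Your remarks on the continuation argument needed to justify the local-Lipschitz step and on the time-differentiated Ritz estimate for $\|\partial_t\rho\|$ are in fact more careful than the paper's own treatment, which uses both implicitly.
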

\begin{proof}
We split the error 
\[
p(t)-p_{\text{ms}}(t)=(p(t)-\hat{p}(t))+(\hat{p}(t)-p_{\text{ms}}(t)):=\rho(t) +\theta(t),\quad \text{for each }t\in[0,T],
\]
where $\hat{p}$ is the elliptic projection in the space $V_{0}$ of the solution $p$. From Lemma~\ref{lem:esti-proj} we immediately get 
\[
\|\rho(t)\|_{H^{1}}\leq C_{\rm{ov}}^{1/2}\kappa_{\min}^{-1/2}\max_{1\leq i\leq N_v^H}\left(\lambda_{N_b^{S_{i}}+1}^{S_{i}}\right)^{-1/2}.
\]
For $\theta$, in virtue of \eqref{eq:var_form_ms} and \eqref{eq:proj_p}, we obtain 
\begin{equation}\label{eq:th-h1}
\begin{split}
m\left(\frac{\partial\theta}{\partial t},v\right)+a(\theta,v)&=m\left(\frac{\partial\hat{p}}{\partial t},v\right)-F(p_{\rm{ms}};v)+a(\hat{p},v)\\
& =  m\left(\frac{\partial(\hat{p}-p)}{\partial t},v\right)+ m\left(\frac{\partial p}{\partial t},v\right)+a(p,v)-F(p_{\rm{ms}};v)\\
& = F(p;v)-F(p_{\rm{ms}};v)-m\left(\frac{\partial\rho}{\partial t},v\right),
\end{split}
\end{equation}
for all $v\in\widehat{V}_{0}$. Taking $v=2\tfrac{\partial\theta}{\partial t}\in V_{0}$, and following standard computations, and invoking Liptschitz continuity of $f$ from Assumptions \ref{asp:01}--\ref{asp:02} and Lemma \ref{lem:local-f}, we find that
\begin{align*}
2\left\|\frac{\partial\theta}{\partial t}\right\|_{L^{2}}^{2}+\frac{d}{dt}\left\|\theta\right\|^{2}_{H^{1}}&=2F\left(p;\frac{\partial\theta}{\partial t}\right)-2F\left(p_{\rm{ms}};\frac{\partial\theta}{\partial t}\right)-2m\left(\frac{\partial\rho}{\partial t},\frac{\partial\theta}{\partial t}\right)\\
& =2F\left(p;\frac{\partial\theta}{\partial t}\right)-2F\left(\hat{p};\frac{\partial\theta}{\partial t}\right)+2F\left(\hat{p};\frac{\partial\theta}{\partial t}\right)-2F\left(p_{\rm{ms}};\frac{\partial\theta}{\partial t}\right)-2m\left(\frac{\partial\rho}{\partial t},\frac{\partial\theta}{\partial t}\right)\\
&\leq 2\left\|F\left(p;\frac{\partial\theta}{\partial t}\right)-2F\left(\hat{p};\frac{\partial\theta}{\partial t}\right)\right\|_{L^{2}}+2\left\|F\left(\hat{p};\frac{\partial\theta}{\partial t}\right)-2F\left(p_{\rm{ms}};\frac{\partial\theta}{\partial t}\right)\right\|_{L^{2}}\\
&\quad+2\left\|\frac{\partial\rho}{\partial t}\right\|_{L^{2}}\left\|\frac{\partial\theta}{\partial t}\right\|_{L^{2}}\\
&\leq \|\rho\|^{2}_{L^{2}}+\|\theta\|_{H^{1}}^{2}+\left\|\frac{\partial\rho}{\partial t}\right\|_{L^{2}}^{2}+ 2\left\|\frac{\partial\theta}{\partial t}\right\|_{L^{2}}^{2},
\end{align*}
we invoke Cauchy-Schwarz and Young's inequality to obtain the last inequality. Then, integrating for time, we get that
\[
\left\|\theta(t)\right\|^{2}_{H^{1}}\leq \|\theta(0)\|_{H^{1}}^{2}+\int_{0}^{t}\left(\|\rho\|^{2}_{L^{2}}+\left\|\frac{\partial\rho}{\partial t}\right\|_{L^{2}}^{2}\right)ds+\int_{0}^{t}\|\theta\|_{H^{1}}^{2}ds.
\]
Next, we use Gronwall's inequality and Lemma~\ref{lem:esti-proj} to arrive at
\begin{align*}
\left\|\theta(t)\right\|^{2}_{H^{1}}&\leq \|\theta(0)\|_{H^{1}}^{2}+C\int_{0}^{t}\left(\|\rho\|^{2}_{L^{2}}+\left\|\frac{\partial\rho}{\partial t}\right\|_{L^{2}}^{2}\right)ds\\
& \leq \|\theta(0)\|_{H^{1}}^{2}+C_{T}C_{\rm{ov}}^{2}\kappa_{\min}^{-2}\max_{1\leq i\leq N_v^H}\left(\lambda_{N_b^{S_{i}}+1}^{S_{i}}\right)^{-2},
\end{align*}
where $C_{T}$ is a constant depending on $T$. Finally, we use the triangle inequality to get, 
\[
\left\|p-p_{\rm{ms}}\right\|_{H^{1}}\leq \|(p-p_{\rm{ms}})(0)\|_{H^{1}}+C\left(C_{\rm{ov}}\kappa_{\min}^{-1}\max_{1\leq i\leq N_v^H}\left(\lambda_{N_b^{S_{i}}+1}^{S_{i}}\right)^{-1}+C_{\rm{ov}}^{1/2}\kappa_{\min}^{-1/2}\max_{1\leq i\leq N_v^H}\left(\lambda_{N_b^{S_{i}}+1}^{S_{i}}\right)^{-1/2}\right).
\]
This finishes the proof for \eqref{eq:thmH1main}. Estimate \eqref{eq:thmL2main} follows similar computations now using $v=\theta$ in \eqref{eq:th-h1} and the coercivity of bilinear form $a(\cdot,\cdot)$. This completes the proof.
\end{proof}

Observe that, from Theorem \ref{thm:estiH1}, we infer that
\begin{equation}
\label{eq:onepart}
\|(p-p_{\rm{ms}})(t^{n})\|_{H^{1}}\leq \|(p-p_{\rm{ms}})(0)\|_{H^{1}}+C\left(C_{\rm{ov}}\kappa_{\min}^{-1}\max_{1\leq i\leq N_v^H}\left(\lambda_{N_b^{S_{i}}+1}^{S_{i}}\right)^{-1}+C_{\rm{ov}}^{1/2}\kappa_{\min}^{-1/2}\max_{1\leq i\leq N_v^H}\left(\lambda_{N_b^{S_{i}}+1}^{S_{i}}\right)^{-1/2}\right),
\end{equation}
for $n=1,\dots,N_{\rm{t}}$.

Following \cite{henry1981geometric}, in virtue of $V$-ellipticity of $N$, we have $-N$ is a sectorial on $L^{2}(D)$ (uniformly in $h$), i.e., there exist constants $C>0$ and $\theta \in(\tfrac{1}{2}\pi,\pi)$ such that
\[
\|(\lambda I + N)^{-1}\|\leq \frac{C}{|\gamma|},\quad \gamma\in M_{\theta},
\]
where $M_{\theta}:\{\gamma\in\mathbb{C}:\lambda=\rho e^{i\phi},\rho>0,0\leq\phi\leq \theta\}$. The discrete operator is the infinitesimal of the exponential operator $e^{-tN}$ on $V$ such that
\[
e^{-tN}:=\frac{1}{2\pi}\int_{\Gamma}e^{t\gamma}(\gamma I+N)^{-1}d\gamma,\quad t>0,
\]
where $\Gamma$ denotes a path surrounding the spectrum of $-N$. The following result will be mainly used in this work.
\begin{prp}[Properties of the semigroup \cite{henry1981geometric}]
\label{prp:prop-semigroup}
Let $\alpha\geq 0$ and any given parameter $0\leq \beta\leq 1$, then there exist constants $C_{0},C_{1}>0$ such that
\begin{align*}
\|t^{\alpha}N^{\alpha}e^{-tN}\|_{L^{2}}\leq C_{0},\quad\mbox{for }t>0,\\
\left\|t^{\beta}N^{\beta}\sum_{j=1}^{n-1}e^{-jt N}\right\|_{0}\leq C_{1},\quad\mbox{for } t>0.
\end{align*}
\end{prp}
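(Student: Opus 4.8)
The plan is to reduce both inequalities to scalar (spectral) estimates using the holomorphic functional calculus for the sectorial operator $N$, which is exactly the structure set up just before the statement: $-N$ is sectorial on $L^{2}(\Omega)$, uniformly in $h$, with the resolvent bound $\|(\gamma I+N)^{-1}\|\le C/|\gamma|$ on the sector $M_{\theta}$; moreover, by $V$-ellipticity the spectrum of $N$ lies in a half-plane $\{\operatorname{Re}\gamma\ge\omega\}$ with $\omega>0$ (equivalently $N=M^{-1}A$ is self-adjoint and positive definite with respect to the $M$-inner product, with smallest eigenvalue $\lambda_{\min}\ge\omega$, consistent with the diagonalization $-\tau N=QDQ^{T}M$ used earlier). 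The first bound is the classical analytic-semigroup smoothing property. I would start from the Dunford--Taylor representation
\[
N^{\alpha}e^{-tN}=\frac{1}{2\pi i}\int_{\Gamma}z^{\alpha}e^{-tz}(zI-N)^{-1}\,dz,
\]
with $\Gamma$ the boundary of a sector enclosing $\sigma(N)$, insert the resolvent estimate, and perform the rescaling $z\mapsto z/t$ to extract the factor $t^{-\alpha}$; the remaining integral $\int_{\Gamma}|z|^{\alpha-1}e^{-\operatorname{Re}z}\,|dz|$ converges because $z^{\alpha}$ is integrable near the origin and $\operatorname{Re}z\to+\infty$ along the rays of $\Gamma$. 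This yields $\|N^{\alpha}e^{-tN}\|\le C_{0}t^{-\alpha}$, i.e. the first estimate. (As a shortcut, since $N$ is $M$-self-adjoint one may skip the contour altogether: $\|t^{\alpha}N^{\alpha}e^{-tN}\|=\sup_{\lambda\ge\lambda_{\min}}(t\lambda)^{\alpha}e^{-t\lambda}\le\sup_{u>0}u^{\alpha}e^{-u}=(\alpha/e)^{\alpha}=:C_{0}$.)

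For the second estimate I would use that the sum is geometric: writing $e^{-jtN}=(e^{-tN})^{j}$,
\[
\sum_{j=1}^{n-1}e^{-jtN}=e^{-tN}\bigl(I-e^{-(n-1)tN}\bigr)\bigl(I-e^{-tN}\bigr)^{-1},
\]
so, passing again to a scalar bound with $u:=t\lambda\ge t\lambda_{\min}>0$,
\[
\Bigl\|\,t^{\beta}N^{\beta}\sum_{j=1}^{n-1}e^{-jtN}\Bigr\|\le\sup_{u>0}\;u^{\beta}\,\frac{e^{-u}\bigl(1-e^{-(n-1)u}\bigr)}{1-e^{-u}}.
\]
I would control the right-hand side by splitting at $u\simeq 1$: for $u\ge 1$ one has $1-e^{-u}\ge 1-e^{-1}$ while the numerator decays like $u^{\beta}e^{-u}$, which is bounded; for $u\le 1$ one uses $1-e^{-u}\ge u/2$ together with $1-e^{-(n-1)u}\le\min\{1,(n-1)u\}$, the surviving factor $e^{-u}$ providing the decay. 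This gives the constant $C_{1}$. In the case $\beta=1$ that is most often invoked, the scalar function collapses to $u/(e^{u}-1)$, which is monotone decreasing with supremum $1$, so $C_{1}=1$; the general case $\beta\in[0,1]$ then follows by interpolation between $\beta=0$ (geometric series with ratio $\|e^{-tN}\|\le Me^{-\omega t}$) and $\beta=1$, which is the bookkeeping carried out in \cite{henry1981geometric}.

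The step I expect to be the real obstacle is precisely the resonant regime $u=t\lambda\to 0$ in the second estimate: there the factor $(1-e^{-u})^{-1}$ behaves like $u^{-1}$ and would by itself destroy the bound, so one must use the numerator difference $1-e^{-(n-1)u}=O((n-1)u)$ and the exponential decay $e^{-u}$ in a coordinated way, which is where the strictly positive coercivity lower bound $\lambda_{\min}>0$ enters. Keeping the constant $C_{1}$ uniform requires care with the dependence on the number of steps; this is handled cleanly for $\beta=1$, and for general $\beta$ by the sectorial/semigroup arguments of \cite{henry1981geometric} (using, as elsewhere in this section, that the time horizon $n\tau\le T$ is fixed), to which the remaining routine estimates can be referred. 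With these two scalar bounds in hand, the Proposition follows, and it is then used in the sequel to absorb the accumulated local errors of the exponential-integrator scheme \eqref{eq:mgsfem_ei}.
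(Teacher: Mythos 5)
The paper does not prove this proposition at all: it is quoted verbatim (as ``Properties of the semigroup'') from Henry's monograph \cite{henry1981geometric}, with the sectoriality of $-N$ and the Dunford--Taylor representation of $e^{-tN}$ set up in the preceding paragraph precisely so that the citation applies. Your proposal therefore supplies an argument where the paper supplies none, and the argument you give is the standard and correct one: the contour-integral (or, using that $N=M^{-1}A$ is self-adjoint and positive definite in the $M$-inner product, the purely spectral) derivation of the smoothing bound $\|t^{\alpha}N^{\alpha}e^{-tN}\|\leq\sup_{u>0}u^{\alpha}e^{-u}=(\alpha/e)^{\alpha}$, and the geometric-sum reduction of the second bound to $\sup_{u}u^{\beta}e^{-u}(1-e^{-(n-1)u})/(1-e^{-u})$. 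You also correctly isolate the only delicate point, the regime $u=t\lambda\to0$. One clarification on that point: the coercivity lower bound $\lambda_{\min}>0$ is not what rescues the case $\beta<1$, since for fixed $\lambda=\lambda_{\min}$ and $t\to0$, $n\to\infty$ the scalar quantity behaves like $u^{\beta-1}\to\infty$; what makes the constant finite is exactly the constraint $nt\leq T$ that you invoke in passing (via $1-e^{-(n-1)u}\leq\min\{1,(n-1)u\}$), so the constant $C_{1}$ necessarily depends on $T$, consistent with how the proposition is used in Theorem \ref{thm:main-result}. For $\beta=1$ your observation that the scalar function collapses to $u/(e^{u}-1)\leq1$ is exact and needs no such restriction. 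With that one attribution corrected, your proof is a sound, self-contained replacement for the citation.
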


\begin{thm}
\label{thm:main-result}
Let $p\in V$ be the solution of \eqref{eq:fine_sys} and $p_{\rm{ms}}$ multiscale solution obtained from scheme \eqref{eq:mgsfem_ei}. Assume that the function $f$ fulfills Assumption~\ref{asp:02} and Lemma~\ref{lem:local-f}. Then, holds
\begin{equation}\label{eq:main-thm1}
\|\varepsilon^{n}\|_{H^{1}}\leq \|(p-p_{\rm{ms}})(0)\|_{H^{1}}+C\left\{\tau+C_{\rm{ov}}\kappa_{\min}^{-1}\max_{1\leq i\leq N_v^H}\left(\lambda_{N_b^{S_{i}}+1}^{S_{i}}\right)^{-1}+C_{\rm{ov}}^{1/2}\kappa_{\min}^{-1/2}\max_{1\leq i\leq N_v^H}\left(\lambda_{N_b^{S_{i}}+1}^{S_{i}}\right)^{-1/2}\right\} 
\end{equation}
where $\varepsilon^{n}:=p(t^{n})-p_{\rm{ms}}^{n}$, for $n=1,\dots,N_{\rm{t}}$, and $C>0$ is a constant that dependent on $u,\tfrac{\partial u}{\partial t},f$ and $T$.
\end{thm}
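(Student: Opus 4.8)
The plan is to split the total error into a spatial (multiscale semidiscretization) part and a temporal (exponential integrator) part,
\[
\varepsilon^{n}=p(t^{n})-p_{\rm{ms}}^{n}=\big(p(t^{n})-p_{\rm{ms}}(t^{n})\big)+\big(p_{\rm{ms}}(t^{n})-p_{\rm{ms}}^{n}\big)=:e_{\mathrm{sp}}^{n}+e_{\mathrm{tm}}^{n},
\]
where $p_{\rm{ms}}(t)\in V_{0}$ is the semidiscrete multiscale solution of \eqref{eq:var_form_ms} (equivalently \eqref{eq:coarse_problem}). The spatial part is already controlled: by \eqref{eq:onepart}, which is a direct consequence of Theorem~\ref{thm:estiH1}, $\|e_{\mathrm{sp}}^{n}\|_{H^{1}}$ is bounded by $\|(p-p_{\rm{ms}})(0)\|_{H^{1}}$ plus exactly the two spectral quantities appearing in \eqref{eq:main-thm1}. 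So the theorem reduces to proving the time-discretization estimate $\|e_{\mathrm{tm}}^{n}\|_{H^{1}}\leq C\tau$ for $n=1,\dots,N_{\rm{t}}$, after which the triangle inequality gives \eqref{eq:main-thm1}.

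For the temporal estimate I would first note that the scheme \eqref{eq:mgsfem_ei} is one step of the exponential Euler method \eqref{eq:exp_euler_scheme}--\eqref{eq:exp_quadrature_rule} (case $s=1$, $c_{1}=0$) applied to the reduced system \eqref{eq:coarse_problem}, which on $V_{0}$ reads $\tfrac{d}{dt}p_{\rm{ms}}+N_{0}p_{\rm{ms}}=F_{0}(p_{\rm{ms}})$ with $N_{0}=M_{0}^{-1}A_{0}$ and $F_{0}=M_{0}^{-1}b_{0}$. By the variation-of-constants formula \eqref{eq:variation_of_constants} for the reduced problem and $e^{z}=z\varphi_{1}(z)+1$,
\[
p_{\rm{ms}}(t^{n})=e^{-\tau N_{0}}p_{\rm{ms}}(t^{n-1})+\int_{0}^{\tau}e^{-(\tau-l)N_{0}}F_{0}\big(p_{\rm{ms}}(t^{n-1}+l)\big)\,dl,\qquad p_{\rm{ms}}^{n}=e^{-\tau N_{0}}p_{\rm{ms}}^{n-1}+\tau\varphi_{1}(-\tau N_{0})F_{0}(p_{\rm{ms}}^{n-1}).
\]
Subtracting and inserting $F_{0}(p_{\rm{ms}}(t^{n-1}))$ inside the integral yields the recursion
\[
e_{\mathrm{tm}}^{n}=e^{-\tau N_{0}}e_{\mathrm{tm}}^{n-1}+\tau\varphi_{1}(-\tau N_{0})\big(F_{0}(p_{\rm{ms}}(t^{n-1}))-F_{0}(p_{\rm{ms}}^{n-1})\big)+\delta^{n},
\]
with local defect $\delta^{n}=\int_{0}^{\tau}e^{-(\tau-l)N_{0}}\big(F_{0}(p_{\rm{ms}}(t^{n-1}+l))-F_{0}(p_{\rm{ms}}(t^{n-1}))\big)dl$, and, unrolling, $e_{\mathrm{tm}}^{n}=e^{-n\tau N_{0}}e_{\mathrm{tm}}^{0}+\sum_{j=1}^{n}e^{-(n-j)\tau N_{0}}\big(\tau\varphi_{1}(-\tau N_{0})(F_{0}(p_{\rm{ms}}(t^{j-1}))-F_{0}(p_{\rm{ms}}^{j-1}))+\delta^{j}\big)$, with $e_{\mathrm{tm}}^{0}=0$ for the natural initialization $p_{\rm{ms}}^{0}=p_{\rm{ms}}(0)$.

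Next I would estimate $\delta^{n}$. Writing $F_{0}(p_{\rm{ms}}(t^{n-1}+l))-F_{0}(p_{\rm{ms}}(t^{n-1}))=\int_{0}^{l}\tfrac{d}{ds}F_{0}(p_{\rm{ms}}(t^{n-1}+s))\,ds$ and using the uniform-in-$\tau$ regularity of $p_{\rm{ms}}$ inherited from Assumption~\ref{asp:01} (via semidiscrete stability and Lemma~\ref{lem:esti-proj}) together with Lemma~\ref{lem:local-f}, this difference is $O(l)$ in $L^{2}$; since $\int_{\sigma}^{\tau}e^{-(\tau-l)N_{0}}dl=(\tau-\sigma)\varphi_{1}(-(\tau-\sigma)N_{0})$ has $L^{2}$-norm $\leq\tau-\sigma$, a change of order of integration gives $\|\delta^{n}\|_{L^{2}}\leq C\tau^{2}$, while distributing one half power of $N_{0}$ (so that $\|N_{0}^{1/2}(\tau-\sigma)\varphi_{1}(-(\tau-\sigma)N_{0})\|\leq C(\tau-\sigma)^{1/2}$, by Proposition~\ref{prp:prop-semigroup}) gives $\|\delta^{n}\|_{H^{1}}\leq C\tau^{3/2}$. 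For the accumulated defect I would estimate in the energy norm $\|N_{0}^{1/2}(\cdot)\|$, putting the half power of $N_{0}$ on the semigroup factors and using $\|N_{0}^{1/2}e^{-sN_{0}}\|\leq Cs^{-1/2}$ (Proposition~\ref{prp:prop-semigroup}): the off-diagonal terms contribute $\sum_{k=1}^{n-1}C(k\tau)^{-1/2}\,\tau^{2}=C\tau^{3/2}\sum_{k=1}^{n-1}k^{-1/2}\leq C\tau^{3/2}n^{1/2}=C(t^{n})^{1/2}\tau$, and the single diagonal term is bounded directly by $C\tau^{3/2}$, so the accumulated defect is $O(\tau)$. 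The same bookkeeping applied to the nonlinear propagation term together with Lemma~\ref{lem:local-f} yields $\|e_{\mathrm{tm}}^{n}\|_{H^{1}}\leq C\tau+C\tau^{1/2}\|e_{\mathrm{tm}}^{n-1}\|_{H^{1}}+C\tau\sum_{j=1}^{n-1}\big((n-j)\tau\big)^{-1/2}\|e_{\mathrm{tm}}^{j-1}\|_{H^{1}}$; since $\tau\sum_{k=1}^{n-1}(k\tau)^{-1/2}\leq C(T)$, a generalized (weakly singular) discrete Gronwall inequality gives $\|e_{\mathrm{tm}}^{n}\|_{H^{1}}\leq C\tau$, and combining with the bound on $e_{\mathrm{sp}}^{n}$ proves \eqref{eq:main-thm1}.

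The main obstacle is the temporal estimate, and inside it two coupled difficulties. First, Lemma~\ref{lem:local-f} only controls $F_{0}(v)-F_{0}(w)$ in $L^{2}$ by the $H^{1}$-distance $\|v-w\|_{H^{1}}$, so closing both the defect bound and the nonlinear-propagation bound in $H^{1}$ forces a systematic use of parabolic smoothing (Proposition~\ref{prp:prop-semigroup}): one trades a half power of $N_{0}$ for an integrable singularity $(\tau-l)^{-1/2}$ or $((n-j)\tau)^{-1/2}$, and one must verify that summing such singularities over the $O(\tau^{-1})$ steps costs precisely one half power of $\tau$ — this is exactly what upgrades the crude $O(\tau^{3/2})$ per-step defect into the optimal $O(\tau)$ global bound. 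Second, Lemma~\ref{lem:local-f} is only local, valid in a strip around $p(\cdot,t)$, so the whole estimate must be run as an induction on $n$ with an a posteriori verification that $p_{\rm{ms}}^{n}$ remains in that strip — which holds provided $\tau$ and the spectral gaps $\big(\lambda_{N_b^{S_{i}}+1}^{S_{i}}\big)^{-1}$ are small enough — and one must separately supply the uniform-in-$\tau$ regularity bounds on $\partial_{t}p_{\rm{ms}}$ and $\partial_{tt}p_{\rm{ms}}$ used in the defect estimate.
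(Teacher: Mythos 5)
Your proposal is correct and follows essentially the same route as the paper: split the error by the triangle inequality into the semidiscrete spatial part (controlled by Theorem~\ref{thm:estiH1}) and the temporal part, then treat the latter via the variation-of-constants formula, a one-step recursion for the exponential Euler defect, parabolic smoothing from Proposition~\ref{prp:prop-semigroup} to trade half powers of $N_{0}$ for integrable singularities, the local Lipschitz bound of Lemma~\ref{lem:local-f}, and a weakly singular discrete Gronwall inequality. The only cosmetic difference is that you Taylor-expand the nonlinearity around $p_{\rm{ms}}(t^{n-1})$ rather than around the exact solution $p(t^{n-1})$ as the paper does, which keeps your temporal recursion self-contained in $V_{0}$ instead of re-invoking Theorem~\ref{thm:estiH1} inside the nonlinear propagation terms; your explicit remarks on the induction needed to stay in the Lipschitz strip and on the required uniform regularity of $p_{\rm{ms}}$ are points the paper leaves implicit.
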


\begin{proof}
Using the triangle inequality, we have that,
\[
\|p(t^{n})-p_{\rm{ms}}^{n}\|_{H^{1}}\leq \|p(t^{n})-p_{\rm{ms}}(t^{n})\|_{H^{1}}+\|p_{\rm{ms}}(t^{n})-p_{\rm{ms}}^{n}\|_{H^{1}}=I_{1}+I_{2},
\]
where a bound for $I_{1}$ is estimated in Theorem~\ref{thm:estiH1}. Then, we focus on finding a bound for $I_{2}$. From expressions \eqref{eq:variation_of_constants} and \eqref{eq:mgsfem_ei}, follows that
\begin{equation}\label{eq:variation-ms-exp}
\begin{split}
p_{\rm{ms}}(t^{n}) &= e^{-\tau N_{0}}p_{\rm{ms}}(t^{n-1})+\int_{0}^{\tau}e^{-(\tau-s)N_{0}}F_{0}(p_{\rm{ms}}(t^{n-1}+s))ds\\
& = e^{-\tau N_{0}}p_{\rm{ms}}(t^{n-1})+\int_{0}^{\tau}e^{-(\tau-s)N_{0}}F_{0}(p(t^{n-1}+s))ds\\
 & \quad+\int_{0}^{\tau}e^{-(\tau-s)N_{0}}\left[F_{0}(p_{\rm{ms}}(t^{n-1}+s))-F_{0}(p(t^{n-1}+s))\right]ds,
\end{split}
\end{equation}
where $F_{0}$ is the $L^{2}$-orthogonal projection of $F$ in $V_{0}$. Next, using a Taylor expansion with integral remainder in the first integral to find 
\[
F_{0}(p(t^{n-1}+s)) = F_{0}(p(t^{n-1}))+\int_{0}^{s}F^{(1)}_{0}(p(t^{n-1}+\sigma))d\sigma.
\]
Then, substituting the expression above into \eqref{eq:variation-ms-exp} and using the recurrence relation \eqref{eq:varphi_definition} yield
\begin{equation}\label{eq:variation-ms-exp2}
\begin{split}
p_{\rm{ms}}(t^{n}) &= e^{-\tau N_{0}}p_{\rm{ms}}(t^{n-1})+\int_{0}^{\tau}e^{-(\tau-s)N_{0}}F_{0}(p(t^{n-1}))ds+\int_{0}^{\tau}e^{-(\tau-s)N_{0}}\int_{0}^{s}F^{(1)}_{0}(p(t^{n-1}+\sigma))d\sigma ds\\
 & \quad+\int_{0}^{\tau}e^{-(\tau-s)N_{0}}\left[F_{0}(p_{\rm{ms}}(t^{n-1}+s))-F_{0}(p(t^{n-1}+s))\right]ds,\\
& = e^{-\tau N_{0}}p_{\rm{ms}}(t^{n-1})+\tau\phi_{1}(-\tau N_{0})F_{0}(p(t^{n-1}))+\int_{0}^{\tau}e^{-(\tau-s)N_{0}}\int_{0}^{s}F^{(1)}_{0}(p(t^{n-1}+\sigma))d\sigma ds\\
 & \quad+\int_{0}^{\tau}e^{-(\tau-s)N_{0}}\left[F_{0}(p_{\rm{ms}}(t^{n-1}+s))-F_{0}(p(t^{n-1}+s))\right]ds.
\end{split}
\end{equation}
Now, in virtue of \eqref{eq:exp_euler_scheme} or \eqref{eq:mgmsfem_ei_2},  $p_{\rm{ms}}^{n}=e^{-\tau N_{0}}p_{\rm{ms}}^{n-1}+\tau\phi_{1}(-\tau N_{0})F_{0}(p_{\rm{ms}}^{n-1})$, we arrive at
\begin{align*}
p_{\rm{ms}}(t^{n})-p_{\rm{ms}}^{n}& = e^{-\tau N_{0}}[p_{\rm{ms}}(t^{n-1})-p_{\rm{ms}}^{n-1}]+\tau\phi_{1}(-\tau N_{0})\left[F_{0}(p(t^{n-1}))-F_{0}(p_{\rm{ms}}^{n-1})\right]\\
&\quad+\int_{0}^{\tau}e^{-(\tau-s)N_{0}}\int_{0}^{s}F^{(1)}_{0}(p(t^{n-1}+\sigma))d\sigma ds\\
 & \quad+\int_{0}^{\tau}e^{-(\tau-s)N_{0}}\left[F_{0}(p_{\rm{ms}}(t^{n-1}+s))-F_{0}(p(t^{n-1}+s))\right]ds\\
&=e^{-\tau N_{0}}[p_{\rm{ms}}(t^{n-1})-p_{\rm{ms}}^{n-1}]+\tau\phi_{1}(-\tau N_{0})\left[F_{0}(p(t^{n-1}))-F_{0}(p_{\rm{ms}}^{n-1})\right]+\epsilon^{n}.
\end{align*}
We shall redefine the expression above by
\begin{equation}\label{eq:varepsilon-main}
\varepsilon^{n}_{\rm{ms}}:=p_{\rm{ms}}(t^{n})-p_{\rm{ms}}^{n}=e^{-\tau N_{0}}\varepsilon^{n-1}_{\rm{ms}}+\tau\phi_{1}(-\tau N_{0})\left[F_{0}(p(t^{n-1}))-F_{0}(p_{\rm{ms}}^{n-1})\right]+\epsilon^{n}.
\end{equation}
Following \cite{hochbruck2005explicit,poveda2024second}, we obtain by recursion substitution that
\[
\varepsilon^{n}_{\rm{ms}}=\tau\sum_{j=1}^{n-1}e^{-(n-j-1)\tau N_{0}}\phi_{1}(-\tau N_{0})\left[F_{0}(p(t^{j}))-F_{0}(p^{j})\right]+\sum_{j=0}^{n-1}e^{-j\tau N_{0}}\epsilon^{n-j}.
\] 
Thus, we get for $I_{2}$
\begin{equation}\label{eq:I2}
I_{2}\leq\left\|\tau\sum_{j=1}^{n-1}e^{-(n-j-1)\tau N_{0}}\phi_{1}(-\tau N_{0})\left[F_{0}(p(t^{j}))-F_{0}(p^{j})\right]\right\|_{H^{1}}+\left\|\sum_{j=0}^{n-1}e^{-j\tau N_{0}}\epsilon^{n-j}\right\|_{H^{1}}= I_{2,1}+I_{2,2}.
\end{equation}
For $I_{2,1}$, we have
\begin{align*}
I_{2,1}&\leq \left\|\tau\phi_{1}(\tau N_{0})[F_{0}(p(t^{n-1}))-F_{0}(p^{n-1})]\right\|_{H^{1}}+\left\|\tau\sum_{j=0}^{n-2}e^{-(n-j-1)\tau N_{0}}\phi_{1}(-\tau N_{0})\left[F_{0}(p(t^{j}))-F_{0}(p^{j})\right]\right\|_{H^{1}}\\
&= I_{2,1,1}+I_{2,1,2}.
\end{align*}
Note that $N_{0}$ is an operator associated with the bilinear form $a(\cdot,\cdot)$, then $N_{0}=N_{0}^{1/2}N_{0}^{1/2}$. In addition, we get a relation between $H^{1}$-, and $L^{2}$-norm over multiscale space $V_{0}$, in which there exists a constant $\alpha>0$, such that $\tfrac{1}{\alpha}\|N^{1/2}_{0}v\|_{L^{2}}\leq\|v\|_{H^{1}}\leq \alpha\|N^{1/2}_{0}v\|_{L^{2}}$. Then, for $I_{2,1,1}$, from Proposition~\ref{prp:prop-semigroup} and Lemma~\ref{lem:local-f}, we obtain
\begin{equation}\label{eq:I211}
\begin{split}
I_{2,1,1} &\leq \alpha \left\|\tau N_{0}^{1/2}\phi_{1}(\tau N_{0})\right\|_{L^{2}}\left\|F_{0}(p(t^{n-1}))-F_{0}(p^{n-1})\right\|_{L^{2}}\\
&=\alpha\left\|N^{1/2}_{0}\int_{0}^{\tau}e^{-(\tau-s)N_{0}}ds\right\|_{L^{2}}\left\|F_{0}(p(t^{n-1}))-F_{0}(p^{n-1})\right\|_{L^{2}}\\
&\leq C\tau\sup_{0\leq s\leq \tau}\left\|N_{0}^{1/2}e^{-(\tau-s)N_{0}}ds\right\|_{L^{2}}\|p(t^{n-1})-p^{n-1}\|_{H^{1}}\\
&\leq C\tau^{1/2}\|p(t^{n-1})-p^{n-1}\|_{H^{1}}=C\tau^{1/2}\|\varepsilon^{n-1}_{\rm{ms}}\|_{H^{1}}.
\end{split}
\end{equation}
For $I_{2,1,2}$, using the fact $\|\phi_{1}(-\tau N_{0})\|_{L^{2}}\leq C$, and the relation between norms, we get
\begin{align*}
I_{2,1,2}&\leq \left\|\tau\sum_{j=0}^{n-2}e^{-(n-j-1)\tau N_{0}}\phi_{1}(-\tau N_{0})\left[F_{0}(p(t^{j}))-F_{0}(p_{\rm{ms}}(t^{j}))\right]\right\|_{H^{1}}\\
&\quad+\left\|\tau\sum_{j=0}^{n-2}e^{-(n-j-1)\tau N_{0}}\phi_{1}(-\tau N_{0})\left[F_{0}(p_{\rm{ms}}(t^{j}))-F_{0}(p^{j})\right]\right\|_{H^{1}}\\
& \leq \left\|\tau N_{0}^{1/2}\sum_{j=0}^{n-2}e^{-(n-j-1)\tau N_{0}}\left[F_{0}(p(t^{j}))-F_{0}(p_{\rm{ms}}(t^{j}))\right]\right\|_{L^{2}}\\
&\quad+\tau\sum_{j=0}^{n-2}\left\|N_{0}^{1/2}e^{-(n-j-1)\tau N_{0}}\left[F_{0}(p_{\rm{ms}}(t^{j}))-F_{0}(p^{j})\right]\right\|_{L^{2}}.
\end{align*}
Invoking Proposition~\ref{prp:prop-semigroup} and Lemma~\ref{lem:local-f} we find
\begin{equation}\label{eq:I212}
\begin{split}
I_{2,1,2}& \leq C\sup_{0\leq t\leq T}\left\|F_{0}(p(t))-F_{0}(p_{\rm{ms}}(t))\right\|_{L^{2}}+\tau\sum_{j=0}^{n-2}\left(t^{n-j-1}\right)^{-1/2}\left\|F_{0}(p_{\rm{ms}}(t^{j}))-F_{0}(p^{j})\right\|_{L^{2}}\\
& \leq C\sup_{0\leq t \leq T}\|p(t)-p_{\rm{ms}}(t)\|_{H^{1}}+\tau\sum_{j=0}^{n-2}\left(t^{n-j-1}\right)^{-1/2}\left\|\varepsilon^{j}\right\|_{H^{1}}.
\end{split}
\end{equation}
The first term in the expression above is bounded by Theorem \ref{thm:estiH1}. Thus, gathering \eqref{eq:I211} and \eqref{eq:I212}, we get 
\begin{equation}\label{eq:I21}
I_{2,1}\leq C\tau^{1/2}\|\varepsilon^{n-1}_{\rm{ms}}\|_{H^{1}}+\tau\sum_{j=0}^{n-2}\left(t^{n-j-1}\right)^{-1/2}\left\|\varepsilon^{j}_{\rm{ms}}\right\|_{H^{1}}+ C(\text{by Theorem~\ref{thm:estiH1}}).
\end{equation}
Following a similar framework given in \cite{hochbruck2005explicit,poveda2024second}, using Theorem~\ref{thm:estiH1} and Proposition~\ref{prp:prop-semigroup}, we arrive at
\begin{equation}\label{eq:I22}
I_{2,2}\leq \tau \sup_{0\leq t\leq T}\|F_{0}^{(1)}(p(t))\|_{H^{1}}+C(\text{by Theorem~\ref{thm:estiH1}}).
\end{equation}
Then, combining \eqref{eq:I21} and \eqref{eq:I22} into \eqref{eq:I2}, we arrive at
\begin{equation}\label{eq:I2main}
\begin{split}
I_{2}&\leq C\left\{\tau^{1/2}\|\varepsilon^{n-1}_{\rm{ms}}\|_{H^{1}}+\tau\sum_{j=0}^{n-2}\left(t^{n-j-1}\right)^{-1/2}\left\|\varepsilon^{j}_{\rm{ms}}\right\|_{H^{1}}+\tau\right\}+\|(p-p_{\rm{ms}})(0)\|_{H^{1}}\\
& \quad +C\left(C_{\rm{ov}}\kappa_{\min}^{-1}\max_{1\leq i\leq N_v^H}\left(\lambda_{N_b^{S_{i}}+1}^{S_{i}}\right)^{-1}+C_{\rm{ov}}^{1/2}\kappa_{\min}^{-1/2}\max_{1\leq i\leq N_v^H}\left(\lambda_{N_b^{S_{i}}+1}^{S_{i}}\right)^{-1/2}\right)\\
&=C\left\{\tau\sum_{j=0}^{n-1}\left(t^{n-j}\right)^{-1/2}\left\|\varepsilon^{j}_{\rm{ms}}\right\|_{H^{1}}+\tau\right\}+\|(p-p_{\rm{ms}})(0)\|_{H^{1}}\\
&\quad+C\left(C_{\rm{ov}}\kappa_{\min}^{-1}\max_{1\leq i\leq N_v^H}\left(\lambda_{N_b^{S_{i}}+1}^{S_{i}}\right)^{-1}+C_{\rm{ov}}^{1/2}\kappa_{\min}^{-1/2}\max_{1\leq i\leq N_v^H}\left(\lambda_{N_b^{S_{i}}+1}^{S_{i}}\right)^{-1/2}\right).
\end{split}
\end{equation}
Finally, we use a discrete version of Gronwall's inequality (see, for instance, \cite{dixon1986weak}) to find 
\[
I_{2}\leq \|(p-p_{\rm{ms}})(0)\|_{H^{1}}+C\left\{\tau+C_{\rm{ov}}\kappa_{\min}^{-1}\max_{1\leq i\leq N_v^H}\left(\lambda_{N_b^{S_{i}}+1}^{S_{i}}\right)^{-1}+C_{\rm{ov}}^{1/2}\kappa_{\min}^{-1/2}\max_{1\leq i\leq N_v^H}\left(\lambda_{N_b^{S_{i}}+1}^{S_{i}}\right)^{-1/2}\right\}.
\]
Finally, we prove the desired assertion using Theorem~\ref{thm:estiH1} for $I_{1}$ and the estimate above. 
\end{proof}
We can obtain the following error estimates in $L^{2}$-norm using similar manipulations.

\begin{thm}
\label{thm:estiL2}
Let $p\in V$ be the solution of \eqref{eq:fine_sys} and $p_{\rm{ms}}$ multiscale solution obtained from scheme \eqref{eq:mgsfem_ei}. Assume that the function $f$ fulfills Assumption~\ref{asp:02} and Lemma~\ref{lem:local-f}. Then, holds
\begin{equation}\label{eq:main-thm1}
\|\varepsilon^{n}\|_{L^{2}}\leq \|(p-p_{\rm{ms}})(0)\|_{L^{2}}+C\left\{\tau+C_{\rm{ov}}\kappa_{\min}^{-1}\max_{1\leq i\leq N_v^H}\left(\lambda_{N_b^{S_{i}}+1}^{S_{i}}\right)^{-1}\right\},
\end{equation}
where $\varepsilon^{n}:=p(t^{n})-p_{\rm{ms}}^{n}$, for $n=1,\dots,N_{\rm{t}}$, and $C>0$ is a constant that dependent on $u,\tfrac{\partial u}{\partial t},f$ and $T$.
\end{thm}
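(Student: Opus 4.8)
The plan is to mirror the proof of Theorem~\ref{thm:main-result} but replace the $H^{1}$-norm estimates by their $L^{2}$ counterparts throughout. As before, I would split $\varepsilon^{n}=p(t^{n})-p_{\rm{ms}}^{n}$ via the triangle inequality into $I_{1}=\|p(t^{n})-p_{\rm{ms}}(t^{n})\|_{L^{2}}$ and $I_{2}=\|p_{\rm{ms}}(t^{n})-p_{\rm{ms}}^{n}\|_{L^{2}}$, where $I_{1}$ is immediately controlled by the $L^{2}$-estimate \eqref{eq:thmL2main} of Theorem~\ref{thm:estiH1}. For $I_{2}$ I would reuse the recursion \eqref{eq:varepsilon-main} for $\varepsilon^{n}_{\rm{ms}}$ and its unrolled form, so that $I_{2}$ decomposes into the defect-accumulation term $I_{2,1}$ (involving $F_{0}(p(t^{j}))-F_{0}(p^{j})$) and the local-error term $I_{2,2}$ (involving $\sum_{j}e^{-j\tau N_{0}}\epsilon^{n-j}$).

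The key change is in how the semigroup smoothing is applied. In the $H^{1}$ proof one pays a factor $N_{0}^{1/2}$ to convert $L^{2}$ bounds on $F_{0}(p(t^{j}))-F_{0}(p^{j})$ into $H^{1}$ bounds, which produces the singular kernels $(t^{n-j-1})^{-1/2}$ via Proposition~\ref{prp:prop-semigroup} with $\alpha=1/2$. Working directly in $L^{2}$, no such conversion is needed: I would instead use the uniform bounds $\|e^{-t N_{0}}\|_{L^{2}}\le C$, $\|\phi_{1}(-\tau N_{0})\|_{L^{2}}\le C$, and $\|\tau\sum_{j}e^{-j\tau N_{0}}\|_{L^{2}}\le C$ (the $\beta=0$ case of Proposition~\ref{prp:prop-semigroup}), together with the local Lipschitz bound from Lemma~\ref{lem:local-f}, which gives $\|F_{0}(v)-F_{0}(w)\|_{L^{2}}\preceq\|v-w\|_{H^{1}}$. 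The one subtlety is that Lemma~\ref{lem:local-f} still delivers an $H^{1}$-norm on the right-hand side; here I would invoke Theorem~\ref{thm:main-result} itself, which already controls $\|\varepsilon^{j}\|_{H^{1}}$ uniformly by the stated bound, so the accumulated defect contributes exactly $C\tau+C(C_{\rm{ov}}\kappa_{\min}^{-1}\max_i(\lambda_{N_b^{S_i}+1}^{S_i})^{-1}+C_{\rm{ov}}^{1/2}\kappa_{\min}^{-1/2}\max_i(\lambda_{N_b^{S_i}+1}^{S_i})^{-1/2})$ after summation over the bounded time interval.

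For the local error $\epsilon^{n}$, I would Taylor-expand $F_{0}(p(t^{n-1}+s))$ with integral remainder exactly as in \eqref{eq:variation-ms-exp2}, observe that the $\phi_{1}$ term reproduces the scheme, and bound the remaining double integral $\int_{0}^{\tau}e^{-(\tau-s)N_{0}}\int_{0}^{s}F_{0}^{(1)}(p(t^{n-1}+\sigma))\,d\sigma\,ds$ by $C\tau^{2}\sup_{t}\|F_{0}^{(1)}(p(t))\|_{L^{2}}$ using the uniform $L^{2}$-boundedness of the semigroup and the regularity Assumption~\ref{asp:01} (which controls $\partial_{t}p$ and hence $F_{0}^{(1)}(p)=f'(p)\partial_t p$ via Assumption~\ref{asp:02}). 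Summing $n$ of these local errors against $\|e^{-j\tau N_{0}}\|_{L^{2}}\le C$ loses one power of $\tau$, yielding the $O(\tau)$ contribution; note there is no $\tau^{1/2}$ loss as in $I_{2,1,1}$ of the $H^{1}$ case because we never multiply by $N_{0}^{1/2}$.

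The main obstacle — really the only non-routine point — is handling the nonlinear coupling cleanly: Lemma~\ref{lem:local-f} forces an $H^{1}$-norm of the error to appear on the right, so a naive discrete Gronwall argument purely in $L^{2}$ does not close. The remedy is that the $H^{1}$-error is already bounded by Theorem~\ref{thm:main-result}, so one treats $\|F_{0}(p_{\rm{ms}}(t^{j}))-F_{0}(p^{j})\|_{L^{2}}\preceq\|\varepsilon^{j}_{\rm ms}\|_{H^{1}}$ as a known (already-estimated) data term rather than as the unknown being bootstrapped; only the initial-data propagation $e^{-j\tau N_{0}}\varepsilon^{0}_{\rm ms}$ and the accumulated local errors need the discrete Gronwall inequality (e.g.\ \cite{dixon1986weak}), which in $L^{2}$ has bounded (non-singular) kernel and therefore contributes only a $T$-dependent constant. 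Putting $I_{1}$ and $I_{2}$ together yields \eqref{eq:main-thm1} in the $L^{2}$-norm, with the spectral term appearing only to first power $\max_i(\lambda_{N_b^{S_i}+1}^{S_i})^{-1}$ because the duality-improved estimate \eqref{eq:lemE1-02} is available in $L^{2}$.
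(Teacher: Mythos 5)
Your plan is the intended one: the paper gives no separate proof of this theorem (it only says the $L^{2}$ bound follows by ``similar manipulations''), and your blueprint --- split into $I_{1}+I_{2}$, control $I_{1}$ by \eqref{eq:thmL2main}, unroll the recursion \eqref{eq:varepsilon-main}, and replace the $\alpha=\tfrac12$ smoothing estimates of Proposition~\ref{prp:prop-semigroup} by the uniform $\alpha=0$ bounds so that no singular kernels or $\tau^{1/2}$ losses appear --- is exactly the right way to execute that. You also correctly isolate the one non-routine point, namely that Lemma~\ref{lem:local-f} only gives $\|F_{0}(v)-F_{0}(w)\|_{L^{2}}\preceq\|v-w\|_{H^{1}}$, so a purely $L^{2}$ Gronwall loop does not close and the $H^{1}$ theory must be fed in as data.

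The one thing to be careful about is that your resolution of that point does not quite deliver the bound as stated. Every place where the nonlinearity enters --- the accumulated defect terms $F_{0}(p(t^{j}))-F_{0}(p^{j})$ and the piece of $\epsilon^{n}$ containing $F_{0}(p_{\rm ms})-F_{0}(p)$ --- is converted by Lemma~\ref{lem:local-f} into an $H^{1}$ error, which you then bound by Theorem~\ref{thm:main-result} or by \eqref{eq:thmH1main}. Those $H^{1}$ bounds contain the half-power spectral term $C_{\rm{ov}}^{1/2}\kappa_{\min}^{-1/2}\max_{i}\bigl(\lambda_{N_b^{S_{i}}+1}^{S_{i}}\bigr)^{-1/2}$, and after summing over the (bounded) time interval that term survives into your final $L^{2}$ estimate. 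The statement \eqref{eq:main-thm1}, however, contains only the first-power term, which the duality argument \eqref{eq:lemE1-02} supplies for $I_{1}$ but which the nonlinear coupling does not respect under the available Lipschitz hypothesis. So what your argument actually proves is the estimate with the additional $C_{\rm{ov}}^{1/2}\kappa_{\min}^{-1/2}\max_{i}\bigl(\lambda_{N_b^{S_{i}}+1}^{S_{i}}\bigr)^{-1/2}$ term (trivially true in the linear case $f\equiv 0$, where the clean bound does hold); to recover the statement exactly as written you would need a Lipschitz bound of the form $\|f(v)-f(w)\|_{L^{2}}\preceq\|v-w\|_{L^{2}}$, which Assumption~\ref{asp:02} together with Lemma~\ref{lem:local-f} does not provide. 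This is a defect inherited from the theorem statement rather than a flaw in your strategy, but you should state explicitly which version you are proving.
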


\section{Numerical results}\label{sec:numerical_results}
This section presents numerical results to check the proposed meshfree multiscale approach with exponential integration. For this purpose, we consider two model problems based on \eqref{eq:fine_sys} in the computational domain $\Omega = [0, 1]\times [0, 1]$. \corr{Note that the proposed multiscale approach can be expanded to solve three-dimensional problems. MFGMsFEM has already been successfully used for complex three-dimensional problems \cite{nikiforov2023meshfree, nikiforov2023modeling}. The addition of exponential integration can further enhance its efficiency due to a more stable temporal approximation.}

The first problem represents the linear case, and the second is semilinear with a nonlinear right-hand side. For all problems, we set the same boundary and initial conditions. We set a Dirichlet boundary condition on the domain boundary with $p_D = 0$. For an initial condition, we use the heterogeneous function $p_0 = x(1-x)y(1-y)$. The duration of the modeled process is $T = 0.2$.

To numerically analyze the performance of the proposed multiscale approach, we solve the model problems using different methods. As a reference method, we consider a finite element method with linear basis functions on a uniform fine grid with 10201 vertices and 20000 triangular cells. We combine it with the backward Euler method with 30000 time steps. As a standard multiscale approach, we consider the meshfree Generalized Multiscale Finite Element Method with 121 coarse-scale points combined with the backward Euler method, which we will refer to as MFGMsFEM-FD. Finally, we solve the problems using the proposed meshfree Generalized Multiscale Finite Element Method with exponential time integration, which we will denote as MFGMsFEM-EI. The time steps count for the multiscale approaches will be varied to investigate the stability of the solutions.


To estimate the accuracy of the multiscale approaches, we use the following relative $L^2$ and $H^1$ errors
\[ 
{||e||}_{L^2} = \frac{{||p_1-p_2||}_{L^2}^w}{{||p_1||}_{L^2}^w}\cdot100\%, \quad
{||e||}_{H^1} = \frac{{||p_1-p_2||}_{H^1}^w}{{||p_1||}_{H^1}^w}\cdot100\%.
\]
Here, ${||p||}_{L^2}^w = \sqrt{\int_{\Omega} \kappa(x) p^2 \,dx}$, and ${||p||}_{H^1}^w = \sqrt{\int_{\Omega} \kappa(x) \grad p \cdot \grad p \,dx}$, where $p_1$ denotes the reference finite-element solution, and $p_2$ is a multiscale solution (MFGMsFEM-FD or MFGMsFEM-EI).

The numerical implementation of the reference and multiscale methods is based on the open-source computational package FEniCS \cite{logg2012automated}. The FEniCS package provides a flexible and powerful toolkit for implementing the finite element method, working with matrices and vectors, and solving sparse linear systems. The obtained numerical results are visualized using the open-source ParaView visualization software \cite{ahrens2005paraview}.

\subsection{Linear case}\label{subsec:linear_results}
This subsection considers the linear case, where the right-hand side is $f(p) = 0$. Fig. \ref{ris:HCC1} depicts the heterogeneous permeability coefficient $\kappa(x)$, where in the green region $\kappa(x) = \kappa_1 = 1$ and in the yellow region $\kappa(x) = \kappa_2 = 10^4$. Thus, we consider a multiscale medium with a contrast of $10^4$. 

\begin{figure}[!hbt]
\begin{center}
\includegraphics[width=0.4\linewidth]{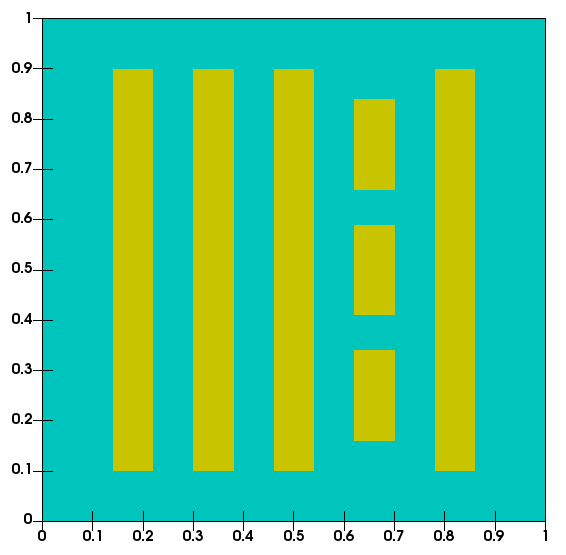}
\end{center}
\caption{High-contrast permeability coefficient $\kappa(x)$ for the linear case, where $\kappa_1 = 1$ is green and $\kappa_2 = 10^4$ is yellow.}
\label{ris:HCC1}
\end{figure}

\begin{figure}[!hbt]
\begin{center}
\begin{minipage}[h]{0.325\linewidth}
\center{\includegraphics[width=\linewidth]{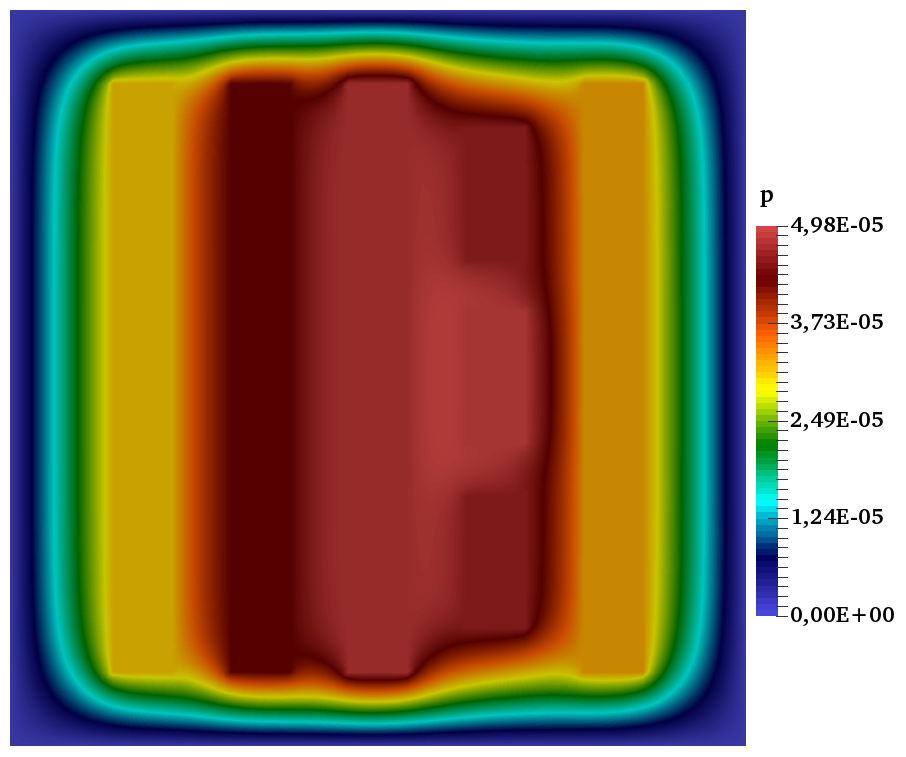}}
\end{minipage}
\begin{minipage}[h]{0.325\linewidth}
\center{\includegraphics[width=\linewidth]{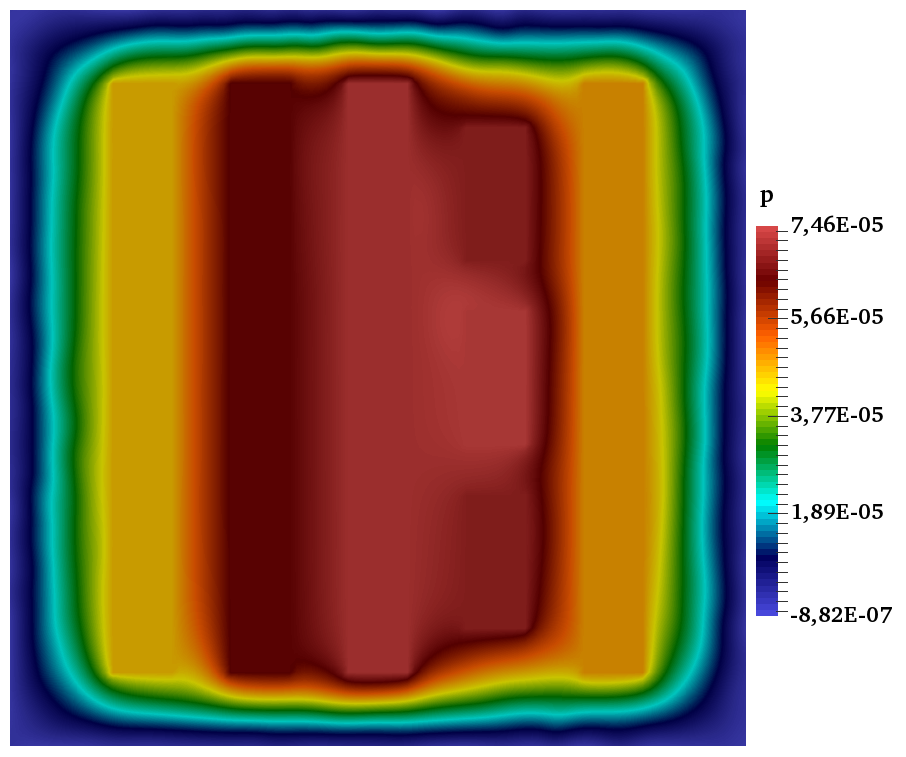}}
\end{minipage}
\begin{minipage}[h]{0.325\linewidth}
\center{\includegraphics[width=\linewidth]{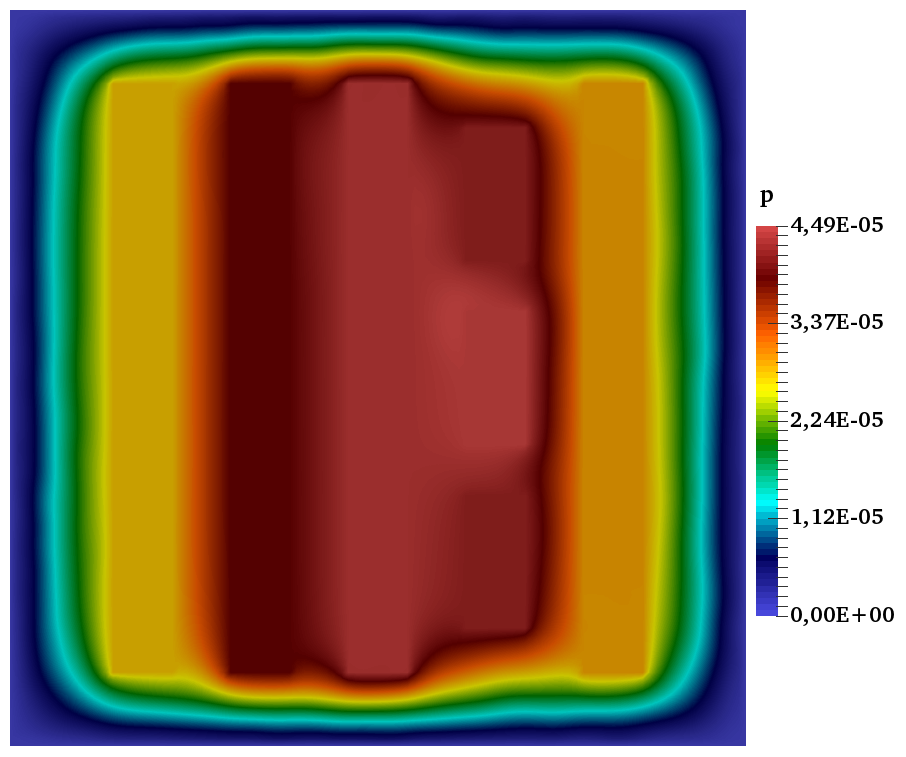}}
\end{minipage}
\end{center}
\caption{Pressure distributions at the final time for the linear case. Left: reference solution. Center: MFGMsFEM-FD solution. Right: MFGMsFEM-EI solution. The multiscale approaches' parameters are $M = 5$, $N_t = 50$, and $\gamma = 3$.}
\label{ris:sol1}
\end{figure}

In Fig. \ref{ris:sol1}, we present the pressure distribution at the final time. We depict the reference, MFGMsFEM-FD, and MFGMsFEM-EI solutions from left to right. For multiscale methods, we use $M=5$ basis functions, $N_t = 50$ time steps, and coverage parameter $\gamma = 3$. We can see that both multiscale methods provide a general dynamics of the reference solution. However, the MFGMsFEM-FD solution contains slight oscillatory regions near boundaries. However, the main issue of the MFGMsFEM-FD is that the solution's values significantly differ from the reference one. In contrast, the values of the MFGMsFEM-EI solution are almost the same as the reference solution. Thus, MFGMsFEM-EI accurately approximates the reference solution with fewer degrees of freedom and time steps.



\begin{figure}[!hbt]
\begin{center}
\begin{minipage}[h]{0.44\linewidth}
\center{\includegraphics[width=\linewidth]{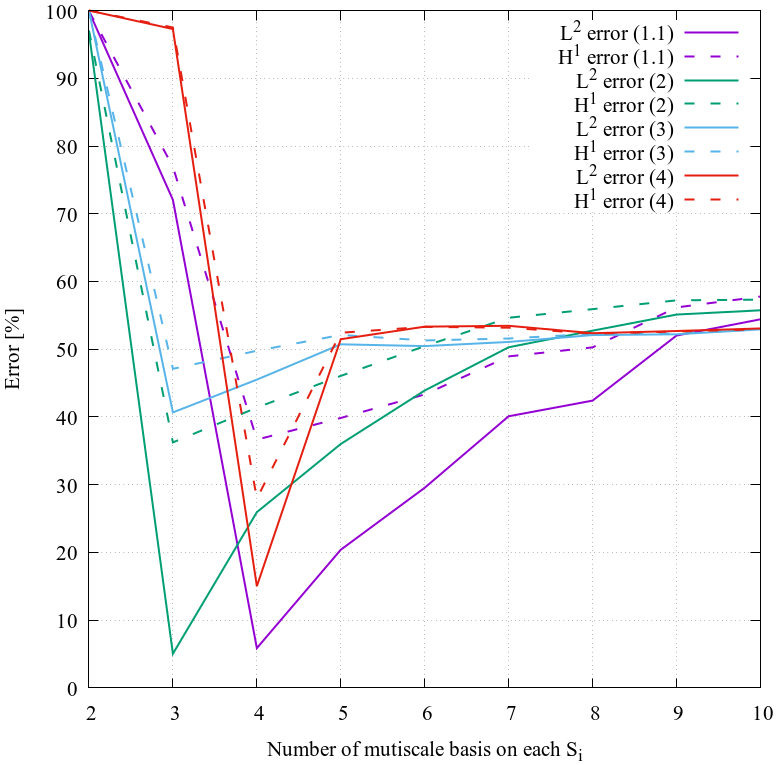}}
\end{minipage}
\begin{minipage}[h]{0.44\linewidth}
\center{\includegraphics[width=\linewidth]{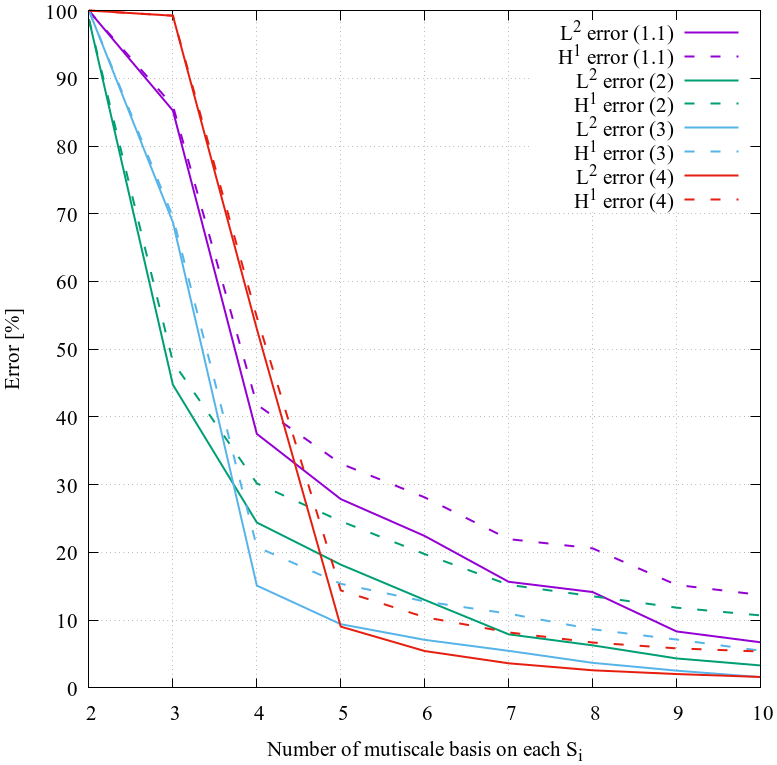}}
\end{minipage}
\end{center}
\caption{The relative $L^2$ and $H^1$ errors between the reference solution and the multiscale solutions using $N_t = 50$ and various $M$ and $\gamma$ at the final time $T = 0.2$ for the linear case. Left: MFGMsFEM-FD. Right: MFGMsFEM-EI.}
\label{ris:error11}
\end{figure}

Let us proceed with the error analysis of multiscale approaches. To investigate convergence on basis functions $M$ and coverage parameter $\gamma$, we fix the number of time steps $N_t = 50$ for both multiscale methods. We present the obtained errors at the final time as plots in Fig. \ref{ris:error11}. From left to right, we depict the plots for MFGMsFEM-FD and MFGMsFEM-EI. Let us consider the results for MFGMsFEM-FD. We see that the general dynamics of the plots for the $L^2$ and $H^1$ errors are much the same. One can see that increasing the number of basis functions does not reduce the error. The value of gamma affects the initial decay of the error, which then increases. The lowest values of $L^2$ errors are reached at $\gamma=2$ and $M=3$, while the lowest $H^1$ error is achieved at $\gamma=4$ and $M=4$. Table \ref{tab:tab11} presents the exact error values for different combinations of $\gamma$ and $M$.

Next, let us analyze the errors of MFGMsFEM-EI (right-hand side of Fig. \ref{ris:error11}). We can observe the convergence of the multiscale method on basis functions. The general dynamics of the $L^2$ and $H^1$ error plots largely coincide. One can observe that the parameter $\gamma$ affects the error decay rate. Except for the case $\gamma=1.1$, a larger value of $\gamma$ leads to a slower start of the error decay. However, after some time, the bigger $\gamma$ errors can surpass those of smaller $\gamma$, whose decay started earlier. In general, we can say that the most optimal values seem to be $\gamma=2$ and $\gamma=3$, which are often used in meshfree methods. Table \ref{tab:tab12} presents the exact error values of MFGMsFEM-EI. Therefore, the proposed multiscale approach with exponential integration provides high accuracy and convergence on basis functions for various $\gamma$ cases.


\begin{figure}[!hbt]
\begin{center}
\begin{minipage}[h]{0.44\linewidth}
\center{\includegraphics[width=\linewidth]{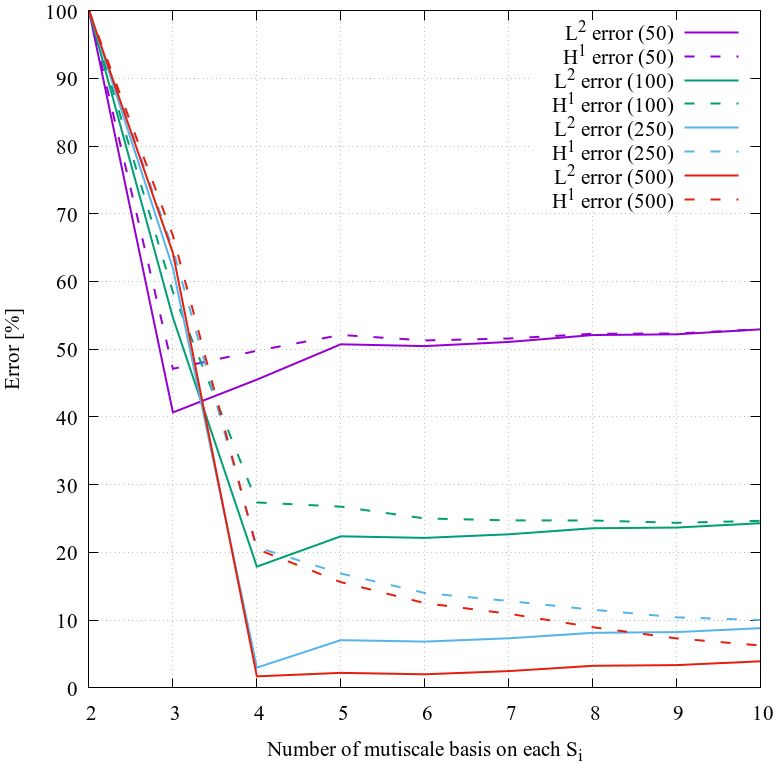}}
\end{minipage}
\begin{minipage}[h]{0.44\linewidth}
\center{\includegraphics[width=\linewidth]{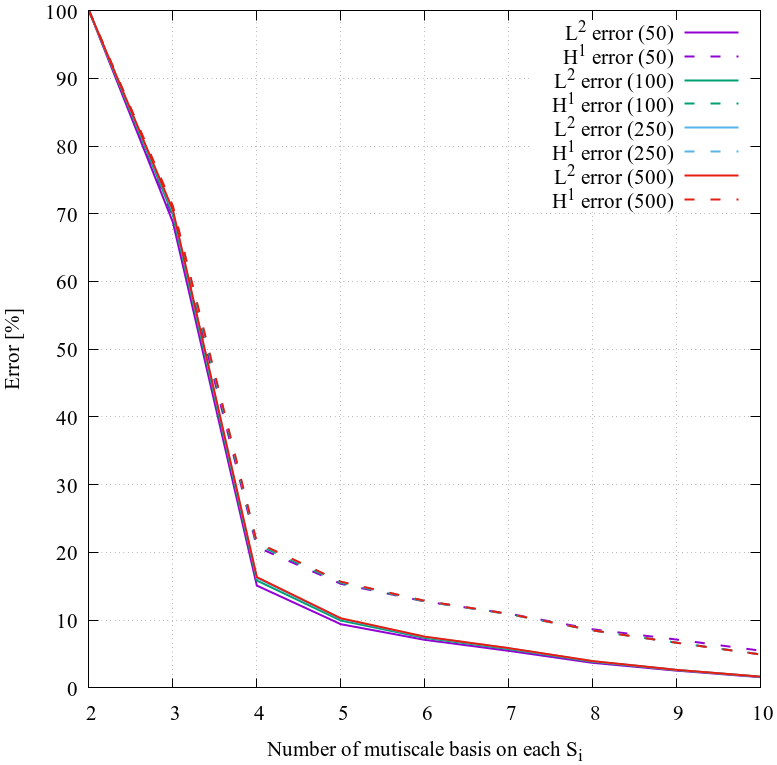}}
\end{minipage}
\end{center}
\caption{The relative $L^2$ and $H^1$ errors between the reference solution and the multiscale solutions using $\gamma = 3$ and various $M$ and $N_t$ at the final time $T = 0.2$ for the linear case. Left: MFGMsFEM-FD. Right: MFGMsFEM-EI.}
\label{ris:error12}
\end{figure}

To analyze the errors depending on the number of time steps, we fix the parameter $\gamma=3$. Fig. \ref{ris:error12} presents plots of relative errors at different numbers of time steps for MFGMsFEM-FD (left) and MFGMsFEM-EI (right). Let us consider the relative errors for MFGMsFEM-FD. One can observe that an increase in the number of time steps leads to a decrease in the errors. However, after the initial decay around 3-5 basis functions, the errors remain at the same level. The exception is the $H^1$ errors for $N_t \geqslant 250$, which decrease as the number of basis functions increases. Thus, the accuracy and convergence of MFGMsFEM-FD depend strongly on the time step size. Table \ref{tab:tab13} shows the exact values of the relative $L^2$ and $H^1$ errors.

Next, let us consider the errors of MFGMsFEM-EI depending on the number of time steps. In all the considered cases, we observe an apparent convergence on the basis functions. Moreover, the number of temporal steps has almost no effect on the accuracy of the multiscale method. The general dynamics of $L^2$ and $H^1$ errors largely coincide. Table \ref{tab:tab14} shows the exact values of the errors for the considered numbers of time steps. Thus, MFGMsFEM-EI achieves high accuracy even when large time steps are used, ensuring convergence on basis functions.

\begin{table}[!hbt]
\begin{center}
\begin{tabular}{c| c| c| c| c| c| c| c| c|}
\multirow{2}{*}{$M$} & \multicolumn{2}{c|}{$\gamma = 1.1$} & \multicolumn{2}{c|}{$\gamma = 2$} & \multicolumn{2}{c|}{$\gamma = 3$} & \multicolumn{2}{c|}{$\gamma = 4$} \\
 & ${||e||}_{L^2}$ & ${||e||}_{H^1}$ & ${||e||}_{L^2}$ & ${||e||}_{H^1}$ & ${||e||}_{L^2}$ & ${||e||}_{H^1}$ & ${||e||}_{L^2}$ & ${||e||}_{H^1}$ \\
\hline
2 & 99.883 & 99.907 & 96.732 & 97.054 & 100 & 100 & 100 & 100 \\
3 & 72.053 & 76.940 & 5.045  & 36.246 & 40.681 & 47.111 & 97.27  & 97.500 \\
4 & 5.876  & 36.664 & 25.953 & 41.387 & 45.522 & 49.780 & 15.017 & 27.959 \\
5 & 20.399 & 39.860 & 36.009 & 46.062 & 50.731 & 52.108 & 51.484 & 52.438 \\
6 & 29.554 & 43.325 & 43.908 & 50.456 & 50.463 & 51.297 & 53.338 & 53.31  \\
7 & 40.112 & 48.947 & 50.292 & 54.649 & 51.076 & 51.588 & 53.465 & 53.183 \\
8 & 42.418 & 50.282 & 52.742 & 55.923 & 52.094 & 52.280 & 52.372 & 52.306 \\
9 & 52.023 & 56.171 & 55.123 & 57.221 & 52.209 & 52.327 & 52.695 & 52.613 \\
10 & 54.419 & 57.777 & 55.75 & 57.323 & 52.939 & 52.953 & 53.072 & 52.955
\end{tabular}
\end{center}
\caption{The relative $L^2$ and $H^1$ errors between the reference solution and the MFGMsFEM-FD solution using $N_t = 50$ and various $M$ and $\gamma$ at the final time $T = 0.2$ for the linear case.}
\label{tab:tab11}
\end{table}

\begin{table}[!hbt]
\begin{center}
\begin{tabular}{c| c| c| c| c| c| c| c| c|}
\multirow{2}{*}{$M$} & \multicolumn{2}{c|}{$\gamma = 1.1$} & \multicolumn{2}{c|}{$\gamma = 2$} & \multicolumn{2}{c|}{$\gamma = 3$} & \multicolumn{2}{c|}{$\gamma = 4$} \\
 & ${||e||}_{L^2}$ & ${||e||}_{H^1}$ & ${||e||}_{L^2}$ & ${||e||}_{H^1}$ & ${||e||}_{L^2}$ & ${||e||}_{H^1}$ & ${||e||}_{L^2}$ & ${||e||}_{H^1}$ \\
\hline
2 & 99.958 & 99.958 & 98.712 & 98.678 & 100    & 100    & 100    & 100\\
3 & 85.239 & 85.955 & 44.759 & 48.14  & 68.75  & 69.514 & 99.237 & 99.239\\
4 & 37.500 & 41.826 & 24.42  & 30.198 & 15.108 & 20.808 & 52.986 & 54.729\\
5 & 27.880 & 33.118 & 18.198 & 24.618 & 9.411  & 15.374 & 9.051  & 14.403\\
6 & 22.428 & 28.131 & 12.988 & 19.758 & 7.098  & 12.77  & 5.444  & 10.452\\
7 & 15.668 & 21.975 & 7.916  & 15.229 & 5.478  & 10.961 & 3.638  & 8.198\\
8 & 14.161 & 20.608 & 6.292  & 13.537 & 3.702  & 8.668  & 2.605  & 6.711\\
9 & 8.342  & 15.215 & 4.356  & 11.856 & 2.555  & 7.149  & 2.055  & 5.843\\
10 & 6.735 & 13.728 & 3.318  & 10.712 & 1.586  & 5.495  & 1.627  & 5.384
\end{tabular}
\end{center}
\caption{The relative $L^2$ and $H^1$ errors between the reference solution and the MFGMsFEM-EI solution using $N_t = 50$ and various $M$ and $\gamma$ at the final time $T = 0.2$ for the linear case.}
\label{tab:tab12}
\end{table}

\begin{table}[!hbt]
\begin{center}
\begin{tabular}{c| c| c| c| c| c| c| c| c|}
\multirow{2}{*}{$M$} & \multicolumn{2}{c|}{$N_t = 50$} & \multicolumn{2}{c|}{$N_t = 100$} & \multicolumn{2}{c|}{$N_t = 250$} & \multicolumn{2}{c|}{$N_t = 500$} \\
 & ${||e||}_{L^2}$ & ${||e||}_{H^1}$ & ${||e||}_{L^2}$ & ${||e||}_{H^1}$ & ${||e||}_{L^2}$ & ${||e||}_{H^1}$ & ${||e||}_{L^2}$ & ${||e||}_{H^1}$ \\
\hline
2 & 100    & 100    & 100    & 100     & 100    & 100    & 100    & 100    \\
3 & 40.681 & 47.111 & 54.682 & 58.461  & 61.963 & 64.783 & 64.209 & 66.773 \\
4 & 45.522 & 49.780 & 17.913 & 27.386  & 3.006  & 20.766 & 1.714  & 20.496 \\
5 & 50.731 & 52.108 & 22.389 & 26.767  & 7.059  & 16.895 & 2.226  & 15.626 \\
6 & 50.463 & 51.297 & 22.152 & 25.023  & 6.84   & 14.027 & 2.012  & 12.519 \\
7 & 51.076 & 51.588 & 22.685 & 24.739  & 7.327  & 12.835 & 2.484  & 10.954 \\
8 & 52.094 & 52.280 & 23.575 & 24.72   & 8.141  & 11.55  & 3.273  & 8.969  \\
9 & 52.209 & 52.327 & 23.681 & 24.381  & 8.242  & 10.419 & 3.371  & 7.317  \\
10 & 52.939 & 52.953 & 24.319 & 24.669 & 8.826  & 10.022 & 3.938  & 6.269  
\end{tabular}
\end{center}
\caption{The relative $L^2$ and $H^1$ errors between the reference solution and the MFGMsFEM-FD solution using $\gamma = 3$ and various $M$ and $N_t$ at the final time $T = 0.2$ for the linear case.}
\label{tab:tab13}
\end{table}

\begin{table}[!hbt]
\begin{center}
\begin{tabular}{c| c| c| c| c| c| c| c| c|}
\multirow{2}{*}{$M$} & \multicolumn{2}{c|}{$N_t = 50$} & \multicolumn{2}{c|}{$N_t = 100$} & \multicolumn{2}{c|}{$N_t = 250$} & \multicolumn{2}{c|}{$N_t = 500$} \\
 & ${||e||}_{L^2}$ & ${||e||}_{H^1}$ & ${||e||}_{L^2}$ & ${||e||}_{H^1}$ & ${||e||}_{L^2}$ & ${||e||}_{H^1}$ & ${||e||}_{L^2}$ & ${||e||}_{H^1}$ \\
\hline
2 & 100    & 100    & 100    & 100    & 100    & 100    & 100    & 100    \\
3 & 68.75  & 69.514 & 69.824 & 70.481 & 70.374 & 70.983 & 70.542 & 71.137 \\
4 & 15.108 & 20.808 & 15.857 & 21.184 & 16.238 & 21.417 & 16.354 & 21.492 \\
5 & 9.411  & 15.374 & 9.947  & 15.523 & 10.212 & 15.648 & 10.292 & 15.691 \\
6 & 7.098  & 12.74  & 7.39   & 12.794 & 7.531  & 12.848 & 7.573  & 12.868 \\
7 & 5.478  & 10.961 & 5.733  & 10.892 & 5.851  & 10.924 & 5.886  & 10.938 \\
8 & 3.702  & 8.668  & 3.867  & 8.497  & 3.94   & 8.497  & 3.961  & 8.502  \\
9 & 2.555  & 7.148  & 2.633  & 6.684  & 2.666  & 6.673  & 2.675  & 6.674  \\
10 & 1.586 & 5.495  & 1.64   & 4.962  & 1.662  & 4.942  & 1.668  & 4.941  
\end{tabular}
\end{center}
\caption{The relative $L^2$ and $H^1$ errors between the reference solution and the MFGMsFEM-EI solution using $\gamma = 3$ and various $M$ and $N_t$ at the final time $T = 0.2$ for the linear case.}
\label{tab:tab14}
\end{table}

Therefore, we can conclude that MFGMsFEM-EI in the considered linear case allows us to achieve high accuracy and convergence on the basis functions for different $\gamma$ and time step sizes. While MFGMsFEM-FD requires a small time step size and does not always provide error reduction as the number of basis functions increases.

\FloatBarrier
\subsection{Semilinear case}\label{subsec:semilinear_results}
Let us now consider the semilinear case in which the right-hand side is a nonlinear function given as $f(p) = -p(1-p)(1+p)$. The nonlinear term adds further computational difficulties, especially for time discretization, so this case is interesting for comparing the performance of the multiscale approaches. For this case, we consider another heterogeneous permeability coefficient $\kappa(x)$ presented in Fig. \ref{ris:HCC2}, wherein the green region $\kappa(x) = \kappa_1 = 1$ and in the yellow region $\kappa(x) = \kappa_2 = 10^4$.


\begin{figure}[!hbt]
\begin{center}
\includegraphics[width=0.4\linewidth]{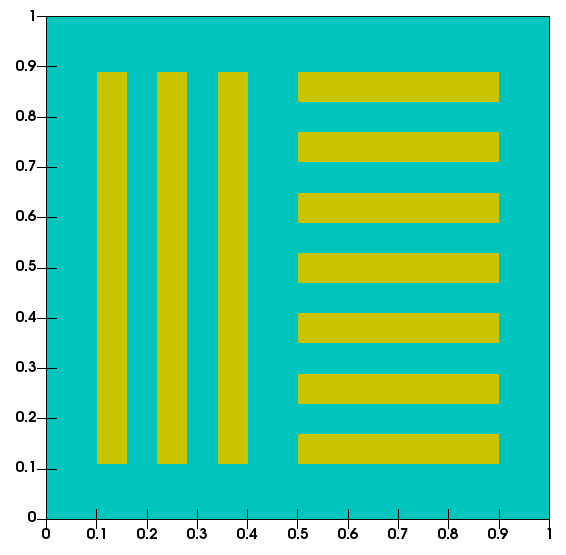}
\end{center}
\caption{High-contrast permeability coefficient $\kappa(x)$ for the semilinear case, where $\kappa_1 = 1$ is green and $\kappa_2 = 10^4$ is yellow.}
\label{ris:HCC2}
\end{figure}


Fig. \ref{ris:sol2} presents the pressure distributions at the final time. From left to right, we depict the reference, MFGMsFEM-FD, and MFGMsFEM-EI solutions, respectively. We use the settings $M = 5$, $N_t = 50$, and $\gamma = 3$ for the multiscale solutions. One can see that the solutions are significantly different from the previous case. The reason for this is the change in the structure of the permeability coefficient and the influence of the nonlinear term. Both multiscale approaches do well in terms of conveying the overall solution dynamics. However, the pressure values of MFGMsFEM-FD are significantly different from the reference ones. In contrast, the values of the MFGMsFEM-EI solution are very close to the reference solution, which indicates that the proposed multiscale approach accurately approximates the reference solution using a few degrees of freedom and time steps.

\begin{figure}[!hbt]
\begin{center}
\begin{minipage}[h]{0.325\linewidth}
\center{\includegraphics[width=\linewidth]{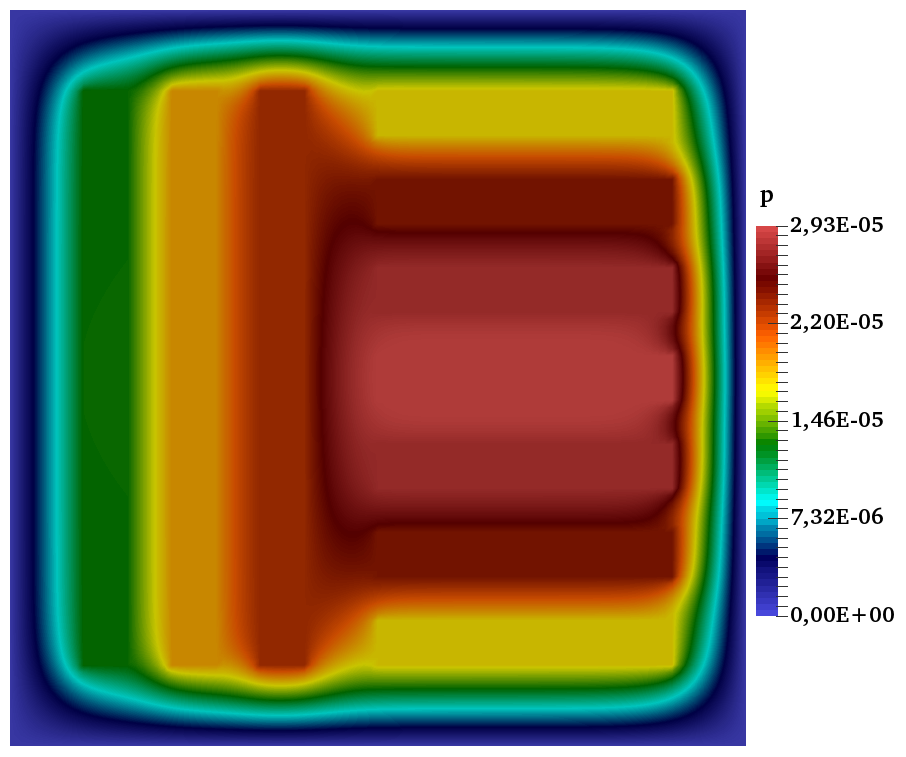}}
\end{minipage}
\begin{minipage}[h]{0.325\linewidth}
\center{\includegraphics[width=\linewidth]{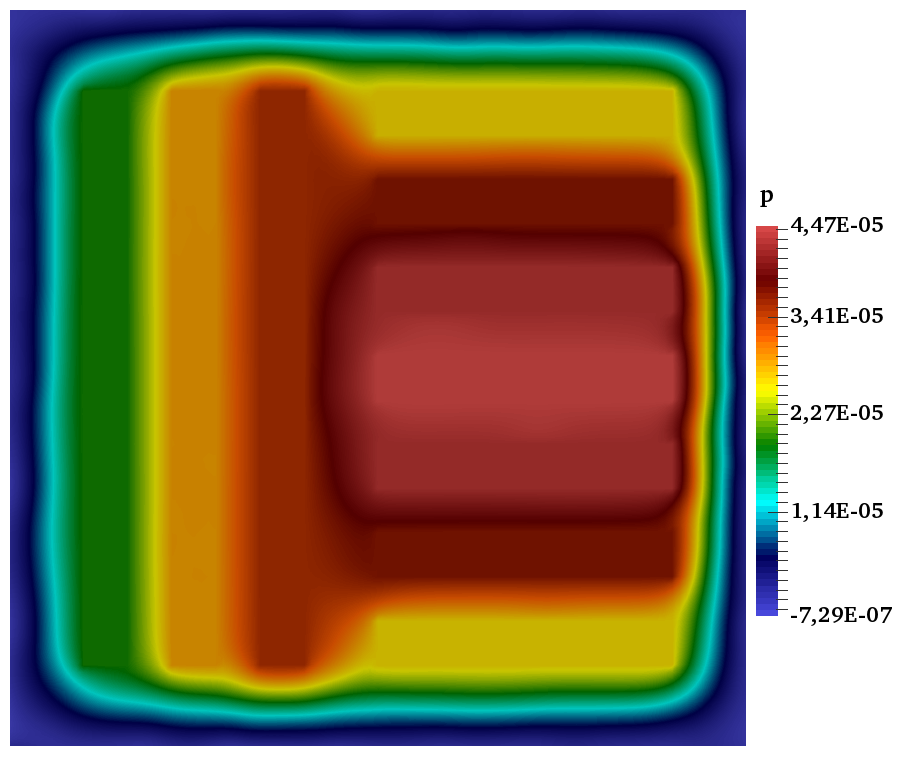}}
\end{minipage}
\begin{minipage}[h]{0.325\linewidth}
\center{\includegraphics[width=\linewidth]{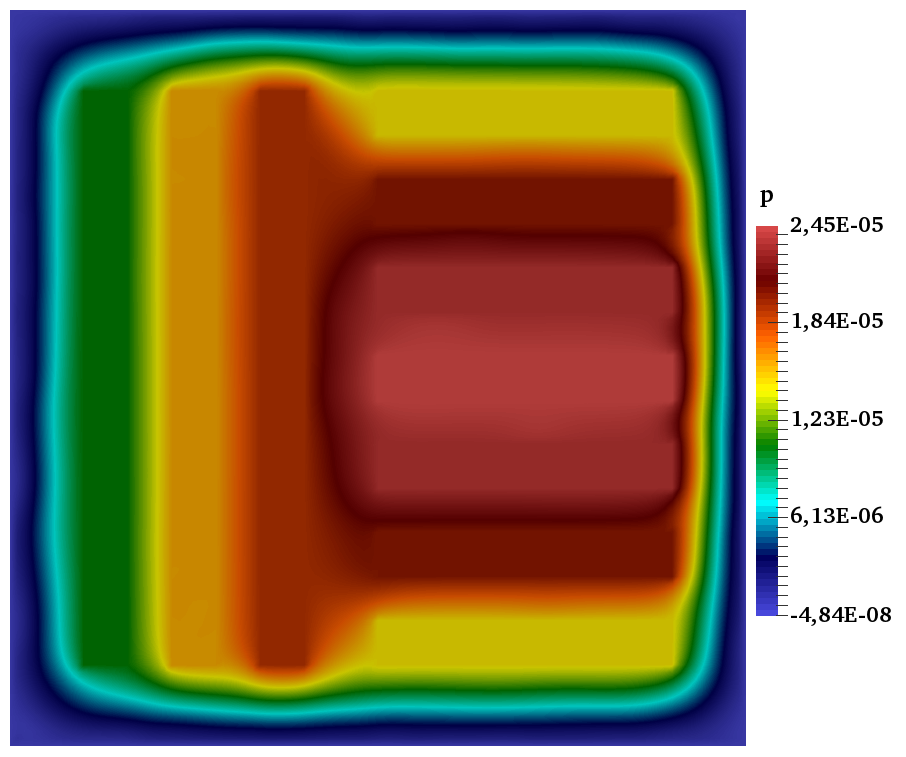}}
\end{minipage}
\end{center}
\caption{Pressure distributions at the final time for the semilinear case. Left: reference solution. Center: MFGMsFEM-FD solution. Right: MFGMsFEM-EI solution. The multiscale approaches' parameters are $M = 5$, $N_t = 50$ and $\gamma = 3$.}
\label{ris:sol2}
\end{figure}

\begin{figure}[!hbt]
\begin{center}
\begin{minipage}[h]{0.44\linewidth}
\center{\includegraphics[width=\linewidth]{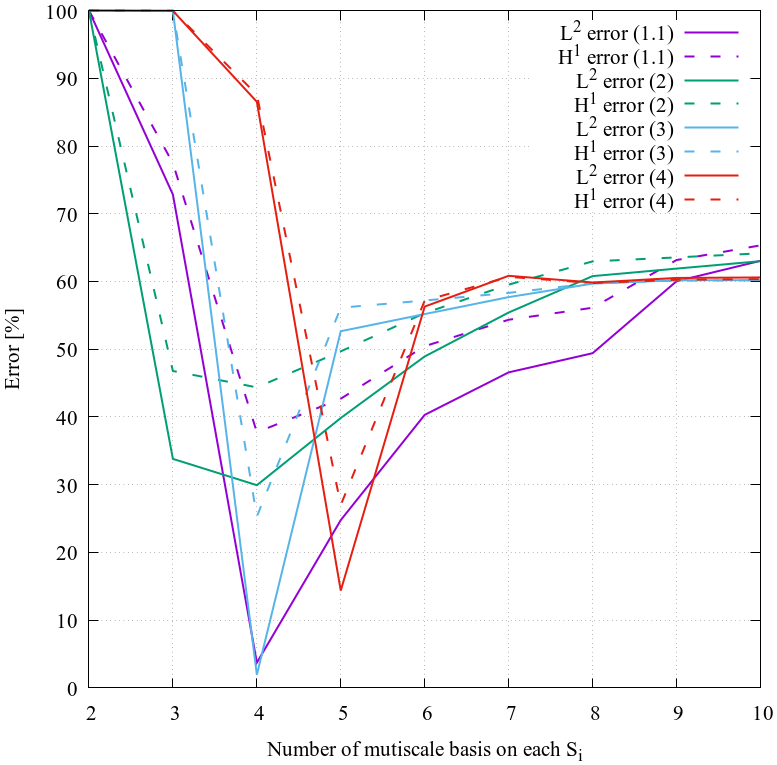}}
\end{minipage}
\begin{minipage}[h]{0.44\linewidth}
\center{\includegraphics[width=\linewidth]{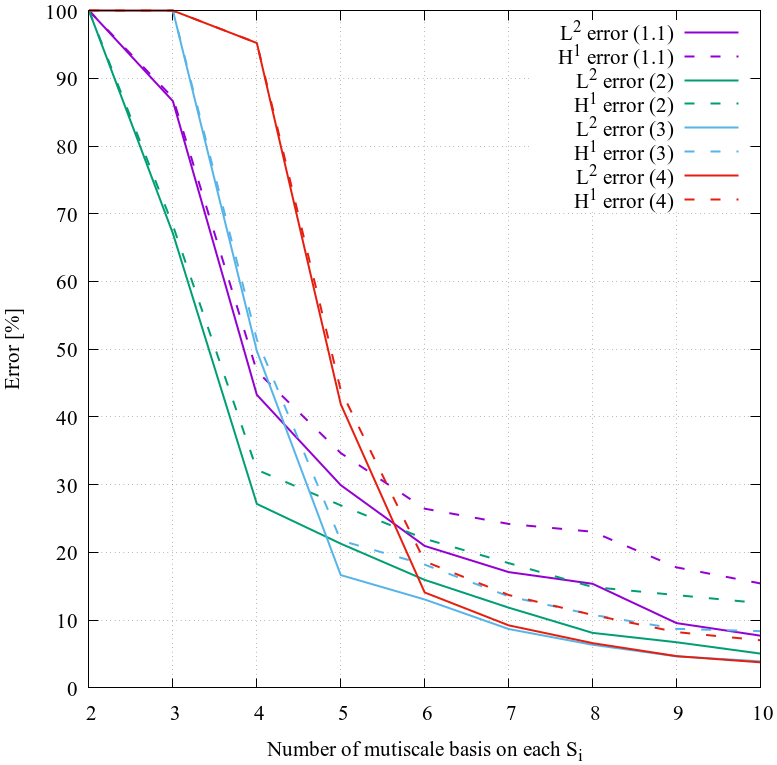}}
\end{minipage}
\end{center}
\caption{The relative $L^2$ and $H^1$ errors between the reference solution and the multiscale solutions using $N_t = 50$ and various $M$ and $\gamma$ at the final time $T = 0.2$ for the semilinear case. Left: MFGMsFEM-FD. Right: MFGMsFEM-EI.}
\label{ris:error21}
\end{figure}

\begin{figure}[!hbt]
\begin{center}
\begin{minipage}[h]{0.44\linewidth}
\center{\includegraphics[width=\linewidth]{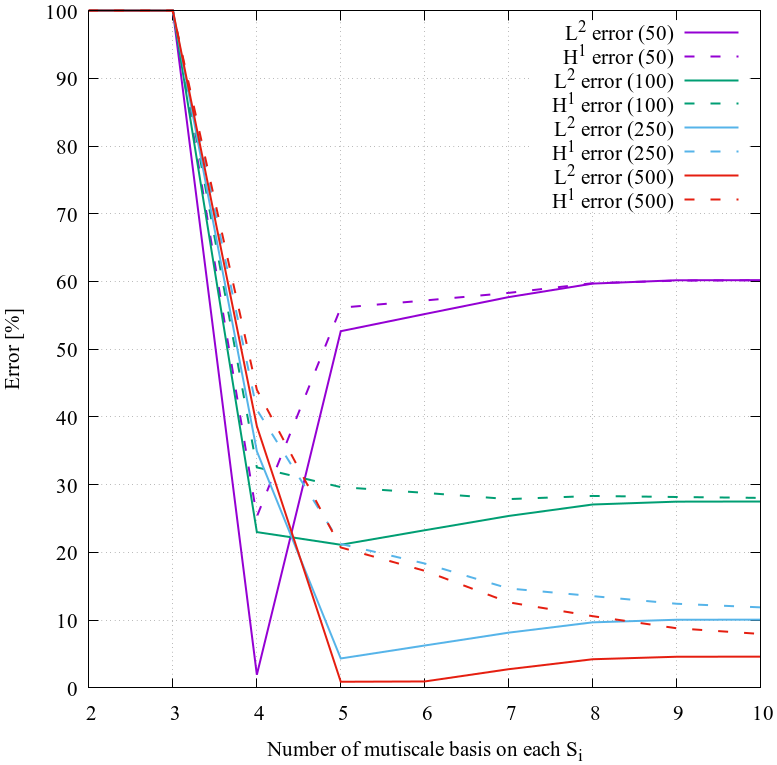}}
\end{minipage}
\begin{minipage}[h]{0.44\linewidth}
\center{\includegraphics[width=\linewidth]{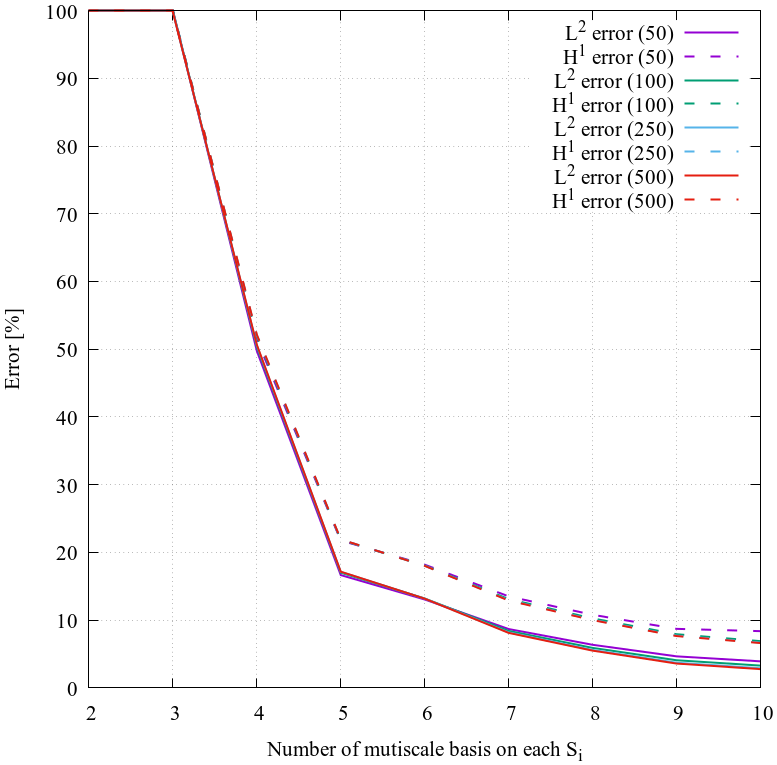}}
\end{minipage}
\end{center}
\caption{The relative $L^2$ and $H^1$ errors between the reference solution and the multiscale solutions using $\gamma = 3$ and various $M$ and $N_t$ at the final time $T = 0.2$ for the semilinear case. Left: MFGMsFEM-FD. Right: MFGMsFEM-EI.}
\label{ris:error22}
\end{figure}

As in the previous case, we analyze the errors of multiscale approaches depending on the coverage parameter $\gamma$. For this purpose, we fix the number of time steps at $N_t = 50$. Fig. \ref{ris:error21} presents plots of the relative $L^2$ and $H^1$ errors for MFGMsFEM-FD (left) and MFGMsFEM-EI (right). Let us consider the errors for MFGMsFEM-FD. We observe similar results as for the linear case. The general dynamics of the $L^2$ and $H^1$ errors are much the same. There is no convergence on the basis functions for all the $\gamma$ cases. Coverage parameter $\gamma$ affects the initial decay of the error, which then grows. Table \ref{tab:tab21} presents the exact values of the relative errors.

Let us consider the relative $L^2$ and $H^1$ errors depending on $\gamma$ for MFGMsFEM-EI. Again, we observe a similar picture as in the linear case. We see an apparent convergence in basis functions for all considered $\gamma$ values. The parameter $\gamma$ affects the rate of error decay. For $\gamma > 1.1$, the larger the value of $\gamma$, the later the decay begins with possible subsequent surpassing of the smaller $\gamma$ errors. Table \ref{tab:tab22} presents the exact values of the relative errors. Therefore, MFGMsFEM-EI provides stable and accurate solutions for various $\gamma$ cases.

Next, we consider the effect of time step size on the errors of multiscale methods. For this purpose, we fix $\gamma = 3$ and vary the number of time steps and basis functions. Figure \ref{ris:error22} presents plots of the relative $L^2$ and $H^1$ errors at the final time for MFGMsFEM-FD and MFGMsFEM-EI (from left to right). The results are similar to those for the linear case. For MFGMsFEM-FD, one can see that a small time step size leads to a decrease in the error. At the same time, we do not observe convergence on the basis functions. The exception is the $H^1$ errors for $N_t \geqslant 250$, which slowly decrease. Table \ref{tab:tab23} shows the exact values of the errors.

Finally, let us analyze the effect of the time step on the MFGMsFEM-EI errors. Compared to the linear case, the convergence rate has slightly decreased. The influence of the nonlinear right-hand side causes this. However, we still clearly see convergence on the basis functions for all time step sizes. One can observe that a smaller time step leads to a slightly more accurate solution, but the difference is insignificant. The overall dynamics of the $L^2$ and $H^1$ errors are largely similar. Table \ref{tab:tab24} shows the exact values of the errors.

\begin{table}[!hbt]
\begin{center}
\begin{tabular}{c| c| c| c| c| c| c| c| c|}
\multirow{2}{*}{$M$} & \multicolumn{2}{c|}{$\gamma = 1.1$} & \multicolumn{2}{c|}{$\gamma = 2$} & \multicolumn{2}{c|}{$\gamma = 3$} & \multicolumn{2}{c|}{$\gamma = 4$} \\
 & ${||e||}_{L^2}$ & ${||e||}_{H^1}$ & ${||e||}_{L^2}$ & ${||e||}_{H^1}$ & ${||e||}_{L^2}$ & ${||e||}_{H^1}$ & ${||e||}_{L^2}$ & ${||e||}_{H^1}$ \\
\hline
2 & 100 & 100 & 100 & 100 & 100 & 100 & 100 & 100 \\
3 & 72.837 & 77.467 & 33.811 & 46.792 & 100 & 100 & 100 & 100 \\
4 & 3.736  & 37.742 & 29.93  & 44.371 & 1.947  & 25.297 & 86.529 & 87.584 \\
5 & 24.778 & 42.703 & 39.849 & 49.69  & 52.663 & 56.136 & 14.382 & 27.005 \\
6 & 40.319 & 50.451 & 48.935 & 55.335 & 55.199 & 57.197 & 56.306 & 57.174 \\
7 & 46.593 & 54.356 & 55.428 & 59.544 & 57.702 & 58.32  & 60.846 & 60.688 \\
8 & 49.417 & 56.126 & 60.797 & 62.967 & 59.692 & 59.75  & 59.854 & 59.769 \\
9 & 60.035 & 63.165 & 61.902 & 63.557 & 60.186 & 60.124 & 60.524 & 60.258 \\
10& 63.055 & 65.337 & 63.007 & 64.167 & 60.204 & 60.188 & 60.602 & 60.356
\end{tabular}
\end{center}
\caption{The relative $L^2$ and $H^1$ errors between the reference solution and the MFGMsFEM-FD solution using $N_t = 50$ and various $M$ and $\gamma$ at the final time $T = 0.2$ for the semilinear case.}
\label{tab:tab21}
\end{table}

\begin{table}[!hbt]
\begin{center}
\begin{tabular}{c| c| c| c| c| c| c| c| c|}
\multirow{2}{*}{$M$} & \multicolumn{2}{c|}{$\gamma = 1.1$} & \multicolumn{2}{c|}{$\gamma = 2$} & \multicolumn{2}{c|}{$\gamma = 3$} & \multicolumn{2}{c|}{$\gamma = 4$} \\
 & ${||e||}_{L^2}$ & ${||e||}_{H^1}$ & ${||e||}_{L^2}$ & ${||e||}_{H^1}$ & ${||e||}_{L^2}$ & ${||e||}_{H^1}$ & ${||e||}_{L^2}$ & ${||e||}_{H^1}$ \\
\hline
2 & 100 & 100 & 100 & 100 & 100 & 100 & 100 & 100 \\
3 & 86.627 & 87.124 & 67.097 & 68.267 & 100 & 100 & 100 & 100 \\
4 & 43.267 & 46.75  & 27.161 & 32.205 & 49.75  & 51.423 & 95.204 & 95.235 \\
5 & 29.938 & 34.678 & 21.295 & 26.959 & 16.657 & 21.77  & 41.892 & 43.958 \\
6 & 20.964 & 26.46  & 15.949 & 22.015 & 13.056 & 18.199 & 14.087 & 18.684 \\
7 & 17.101 & 24.22  & 11.856 & 18.428 & 8.697  & 13.514 & 9.243  & 13.721 \\
8 & 15.378 & 23.06  & 8.124  & 14.901 & 6.366  & 10.797 & 6.611  & 10.774 \\
9 & 9.575 & 17.802  & 6.746  & 13.708 & 4.665  & 8.724  & 4.691  & 8.245  \\
10& 7.714 & 15.428  & 5.071  & 12.522 & 3.928  & 8.387  & 3.781  & 7.043 
\end{tabular}
\end{center}
\caption{The relative $L^2$ and $H^1$ errors between the reference solution and the MFGMsFEM-EI solution using $N_t = 50$ and various $M$ and $\gamma$ at the final time $T = 0.2$ for the semilinear case.}
\label{tab:tab22}
\end{table}

\begin{table}[!hbt]
\begin{center}
\begin{tabular}{c| c| c| c| c| c| c| c| c|}
\multirow{2}{*}{$M$} & \multicolumn{2}{c|}{$N_t = 50$} & \multicolumn{2}{c|}{$N_t = 100$} & \multicolumn{2}{c|}{$N_t = 250$} & \multicolumn{2}{c|}{$N_t = 500$} \\
 & ${||e||}_{L^2}$ & ${||e||}_{H^1}$ & ${||e||}_{L^2}$ & ${||e||}_{H^1}$ & ${||e||}_{L^2}$ & ${||e||}_{H^1}$ & ${||e||}_{L^2}$ & ${||e||}_{H^1}$ \\
\hline
2 & 100 & 100 & 100 & 100 & 100 & 100 & 100 & 100 \\
3 & 100 & 100 & 100 & 100 & 100 & 100 & 100 & 100 \\
4 & 1.947  & 25.297 & 23.006 & 32.555 & 34.946 & 41.075 & 38.642 & 44.021 \\
5 & 52.663 & 56.136 & 21.139 & 29.648 & 4.354  & 21.26  & 0.92   & 20.74  \\
6 & 55.199 & 57.197 & 23.279 & 28.803 & 6.27   & 18.372 & 0.956  & 17.295 \\
7 & 57.702 & 58.32  & 25.39  & 27.88  & 8.16   & 14.688 & 2.768  & 12.638 \\
8 & 59.692 & 59.75  & 27.082 & 28.344 & 9.682  & 13.559 & 4.235  & 10.602 \\
9 & 60.186 & 60.124 & 27.505 & 28.189 & 10.065 & 12.427 & 4.605  & 8.815  \\
10& 60.204 & 60.188 & 27.524 & 28.057 & 10.084 & 11.9   & 4.624  & 7.975  
\end{tabular}
\end{center}
\caption{The relative $L^2$ and $H^1$ errors between the reference solution and the MFGMsFEM-FD solution using $\gamma = 3$ and various $M$ and $N_t$ at the final time $T = 0.2$ for the semilinear case.}
\label{tab:tab23}
\end{table}

\begin{table}[!hbt]
\begin{center}
\begin{tabular}{c| c| c| c| c| c| c| c| c|}
\multirow{2}{*}{$M$} & \multicolumn{2}{c|}{$N_t = 50$} & \multicolumn{2}{c|}{$N_t = 100$} & \multicolumn{2}{c|}{$N_t = 250$} & \multicolumn{2}{c|}{$N_t = 500$} \\
 & ${||e||}_{L^2}$ & ${||e||}_{H^1}$ & ${||e||}_{L^2}$ & ${||e||}_{H^1}$ & ${||e||}_{L^2}$ & ${||e||}_{H^1}$ & ${||e||}_{L^2}$ & ${||e||}_{H^1}$ \\
\hline
2 & 100 & 100 & 100 & 100 & 100 & 100 & 100 & 100 \\
3 & 100 & 100 & 100 & 100 & 100 & 100 & 100 & 100 \\
4 & 49.75  & 51.423 & 50.315 & 51.87  & 50.575 & 52.082 & 50.65  & 52.144 \\
5 & 16.657 & 21.77  & 17.013 & 21.857 & 17.137 & 21.915 & 17.164 & 21.93  \\
6 & 13.056 & 18.2   & 13.194 & 18.077 & 13.193 & 18.038 & 13.179 & 18.025 \\
7 & 8.697  & 13.514 & 8.418  & 13.118 & 8.203  & 12.954 & 8.125  & 12.904 \\
8 & 6.366  & 10.797 & 5.915  & 10.268 & 5.613  & 10.067 & 5.509  & 10.007 \\
9 & 4.665  & 8.724  & 4.089  & 7.935  & 3.725  & 7.723  & 3.602  & 7.663  \\
10& 3.928  & 8.387  & 3.298  & 6.911  & 2.906  & 6.7    & 2.775  & 6.632  
\end{tabular}
\end{center}
\caption{The relative $L^2$ and $H^1$ errors between the reference solution and the MFGMsFEM-EI solution using $\gamma = 3$ and various $M$ and $N_t$ at the final time $T = 0.2$ for the semilinear case.}
\label{tab:tab24}
\end{table}

Thus, MFGMsFEM-EI accurately approximates the reference solution for the semilinear problem with arbitrary $\gamma$ and time step size, ensuring convergence in basis functions. In contrast, MFGMsFEM-FD requires a small time step size to approximate the reference solution accurately, but it does not always ensure convergence on basis functions.

\FloatBarrier
\section{Conclusion}\label{sec:conclusion}
In conclusion, this study presents a novel approach to solving parabolic problems in high-contrast multiscale media by integrating a meshfree generalized multiscale method with exponential integration. This method combines the flexibility and robustness of (multiscale coarse) meshfree techniques with the efficiency of the generalized multiscale finite element methods and the stability advantages of exponential time integration.

We have focused on the coarse meshfree GMsFEM. The coarse space in this method is constructed using a fine-scale computational grid that resolves the heterogeneous parameters of the problem, incorporating multiscale basis functions generated offline using local spectral problems. Specifically, we considered the approximation of solutions to time-dependent semilinear problems in high-contrast multiscale media, addressing stability issues through exponential time integration methods. \corr{Exponential time integration effectively handles stiff problems, allowing for larger time steps and better leveraging GMsFEM's dimension reduction capabilities.}
We presented representative numerical experiments and a detailed analysis of the proposed approach, including a new convergence analysis of the coarse meshfree GMsFEM. 
These advancements contribute to the robustness and efficiency of the coarse mesh-free GMsFEM for various applications.

\section*{Acknowledgments}
The research of Djulustan Nikiforov is supported by the Russian Science Foundation grant No.23-71-10074, \href{https://rscf.ru/en/project/23-71-10074/} {https://rscf.ru/en/project/23-71-10074/}. Dmitry Ammosov would like to acknowledge the support of the Russian government project Science and Universities (project FSRG-2024-0003) aimed at supporting junior laboratories. Yesy Sarmiento gratefully acknowledges DSEP for the International Mobility Program for Postgraduate Students of UNAH, 2024, for supporting her stay at the National University of Colombia to work on GMsFEM methods. Gratitude is also extended to the Department of Mathematics at UNAL for their hospitality.  J. Galvis thanks  MATHDATA - AUIP Network ({Red Iberoamericana de Investigaci\'on en Matem\'aticas Aplicadas a Datos} \href{https://www.mathdata.science/}{https://www.mathdata.science/}), the Center of Excellence in Scientific Computing (Centro de Excelencia en Computaci\'on Cient\'ifica) of the Universidad Nacional de Colombia.

\section{Author Contribution Statement}
All authors contributed equally to the research, development, and writing of this paper. Djulustan Nikiforov, Leonardo A. Poveda, Dmitry Ammosov, Yesy Sarmiento, and Juan Galvis collaboratively developed the conceptual framework, designed the methodology, conducted the analysis, and interpreted the results.  Djulustan Nikiforov and Dmitry Ammosov implemented the numerical test presented in the paper.  Each author participated in drafting and revising the manuscript, ensuring the integrity and accuracy of the work presented. All authors have read and approved the final version of the manuscript.

\appendix
\section{Strengthened Cauchy-Schwarz Inequalities}
Here, we clarify how we obtained \eqref{StrengthenedCS}. We follow \cite{tw}.
The simple abstract inequality is the following.
Assume that 
$$v=\sum_{i=1}^{N_v^H} v_i$$
with $\mbox{supp}(v_i)\subset S_i$. That is, $\{v_i\}_{i=1}^{{N_v^H}}$ is a decomposition of $v$ analogous to the decomposition in  \eqref{eq:decompo}. For the bilinear form $a$, we have 
\[
a(v_i,v_j)\leq \epsilon_{i,j}  a(v_i,v_i)^{1/2} a(v_j,v_j)^{1/2}
\]
for $1\leq i,j,\leq N_v^H$ 
where $\epsilon_{i,j}=1$ if $S_i\cap S_j\not = \emptyset$ and $\epsilon_{i,j}=1$ if $S_i\cap S_j = \emptyset$. 
This set of inequalities is referred to as strengthened Cauchy-Schwarz inequalities.
Define the $N_v^H\times N_v^H$ matrix  $\rho(\epsilon)=[\epsilon_{i,j}]$. Then,

\[
a(v,v)=\sum_{i=1}^{N_v^H}\sum_{i=1}^{N_v^H} a(v_i,v_j) \leq \sum_{i=1}^{N_v^H}\sum_{i=1}^{N_v^H} \epsilon_{i,j}a(v_i,v_i)^{1/2}a(v_j,v_j)^{1/2}
\leq \lambda_{\max}(\rho(\epsilon))
\sum_{i=1}^{N_v^H} a(v_i,v_i).
\]
It is easy to see that $\lambda_{\max}(\rho(\epsilon)) \leq C_{\rm{ov}}$, where $C_{\rm{ov}}$ is defined in \eqref{eq:def:cov}. See Chapters 2 and 3 and Assumptions 2.3, 2.4 and 3.2 in \cite{tw}.
\bibliographystyle{plain}
\bibliography{references}

\end{document}